\pgfplotsset{compat=1.14}
\newtheorem{theorem}{Theorem}[section]
\newtheorem{lemma}[theorem]{Lemma}
\newtheorem{remark}[theorem]{Remark}
\numberwithin{equation}{section}
\newtheorem{corollary}[theorem]{Corollary}
\newtheorem{proposition}[theorem]{Proposition}
\newtheorem{definition}[theorem]{Definition}
\newcommand{\bbC}{\mathbb{C}}
\newcommand{\bbH}{\mathbb{H}}
\newcommand{\R}{\mathbb{R}}
\newcommand{\X}{\mathbb{X}}
\newcommand{\Y}{\mathbb{Y}}
\newcommand{\rmP}{\mathrm{P}}
\newcommand{\cG}{\mathcal{G}}
\newcommand{\cH}{\mathcal{H}}
\newcommand{\cL}{\mathcal{L}}
\newcommand{\cM}{\mathcal{M}}
\newcommand{\cT}{\mathcal{T}}
\renewcommand{\sl}{\mathfrak{sl}}
\newcommand{\x}{\mathbf{x}}
\newcommand{\y}{\mathbf{y}}
\newcommand{\AdS}{\mathrm{AdS}}
\newcommand{\dS}{\mathrm{dS}}
\newcommand{\Mink}{\mathrm{Mink}}
\newcommand{\RP}{\R\rmP}
\newcommand{\CP}{\bbC\rmP}
  \DeclareFontFamily{U}{wncy}{}
    \DeclareFontShape{U}{wncy}{m}{n}{<->wncyr10}{}
    \DeclareSymbolFont{mcy}{U}{wncy}{m}{n}
    \DeclareMathSymbol{\Lob}{\mathord}{mcy}{"6C} 
\newcommand{\inv}[0]{{-1}}
\newcommand{\Rquotient}[2]{{\raisebox{.1em}{$#1$}\raisebox{-.1em}{$\Big/$}\raisebox{-.3em}{$#2$}}}
\newcommand{\rquotient}[2]{{\raisebox{.1em}{$#1$}\raisebox{-.1em}{$/$}\raisebox{-.3em}{$#2$}}}
\newcommand\RedeclareMathOperator{%
  \@ifstar{\def\rmo@s{m}\rmo@redeclare}{\def\rmo@s{o}\rmo@redeclare}%
}
\newcommand\rmo@redeclare[2]{%
  \begingroup \escapechar\m@ne\xdef\@gtempa{{\string#1}}\endgroup
  \expandafter\@ifundefined\@gtempa
     {\@latex@error{\noexpand#1undefined}\@ehc}%
     \relax
  \expandafter\rmo@declmathop\rmo@s{#1}{#2}}
\newcommand\rmo@declmathop[3]{%
  \DeclareRobustCommand{#2}{\qopname\newmcodes@#1{#3}}%
}
\RedeclareMathOperator{\Re}{Re}
\RedeclareMathOperator{\Im}{Im}
\DeclareMathOperator{\sgn}{sgn}
\DeclareMathOperator{\tr}{tr}
\DeclareMathOperator{\Hom}{Hom}
\DeclareMathOperator{\im}{i}
\DeclareMathOperator{\Mat}{Mat}
\DeclareMathOperator{\PSL}{PSL}
\DeclareMathOperator{\PGL}{PGL}
\DeclareMathOperator{\PO}{PO}
\DeclareMathOperator{\PSU}{PSU}
\DeclareMathOperator{\Stab}{Stab}
\DeclareMathOperator{\Isom}{Isom}
\DeclareMathOperator{\vol}{vol}
\DeclareMathOperator{\Un}{U}
\DeclareMathOperator{\Cl}{Cl}
\begin{document}

\title{Lightlike and ideal tetrahedra}

\author{Catherine Meusburger\footnote{{\tt catherine.meusburger@math.uni-erlangen.de}}\\
{\it Department Mathematik},\\ {\it Friedrich-Alexander-Universit\"at Erlangen-N\"urnberg,} \\ {\it Cauerstra\ss e 11, 91058 Erlangen, Germany}\\
\\
Carlos Scarinci\footnote{{\tt cscarinci@ewha.ac.kr}} \\ {\it Institute of Mathematical Sciences,} \\ {\it Ewha Womans University,} \\ {\it 52 Ewhayeodae-gil, Seodaemun-gu, 03760 Seoul, Republic of Korea}
}

\maketitle

\abstract{We give a unified description of tetrahedra with lightlike faces in 3d anti-de Sitter, de Sitter and Minkowski spaces and of their duals in 3d anti-de Sitter, hyperbolic and half-pipe spaces. We show that both types of tetrahedra are determined by a generalized cross-ratio with values in a commutative 2d real algebra that generalizes the complex numbers. Equivalently, tetrahedra with lightlike faces are determined by a pair of edge lengths and their duals by a pair of dihedral angles. We prove that the dual tetrahedra are precisely the generalized ideal tetrahedra introduced by Danciger. Finally, we compute the volumes of both types of tetrahedra as functions of their edge lengths or dihedral angles, obtaining generalizations of the Milnor-Lobachevsky volume formula of ideal hyperbolic tetrahedra.}

\setlength{\parindent}{0pt}
\setlength{\parskip}{7pt plus 2pt minus 1pt}

\section{Introduction}

  \paragraph{\bf Ideal hyperbolic tetrahedra} Hyperbolic ideal tetrahedra are fundamental building blocks in 3d hyperbolic geometry. They are geodesic tetrahedra in $\bbH^3$ with vertices in the ideal boundary $\partial_\infty\bbH^3\cong\CP^1$. As they are determined by their vertices, they are parametrized, up to isometries, by a single complex parameter $z\in\bbC\backslash\{0,1\}$,  its shape parameter or cross-ratio.
  
  The general approach to the construction of 3d hyperbolic structures via hyperbolic ideal tetrahedra was introduced by Thurston in \cite{Thurston}. Starting with a topological 3-manifold $M$ with a topological ideal triangulation, one chooses hyperbolic structures on the tetrahedra that glue smoothly into a hyperbolic structure on $M$. The consistency conditions for the gluing determine a system of algebraic equations on the set of shape parameters. Under a few additional assumptions, solutions to these gluing equations define a smooth hyperbolic structure on $M$. 
  
  This construction is a powerful tool in 3d hyperbolic geometry. Given a hyperbolic 3-manifold $M$ with a geodesic ideal triangulation and solutions of Thurston's gluing equations, one can in principle compute many invariants of $M$. In particular, the hyperbolic volume of $M$ can be computed as the sum of volumes of each ideal tetrahedron \cite{Thurston}, see also \cite{NZ}, which is a well-know function of the shape parameter \cite{Mi}.

  \paragraph{\bf Generalized ideal tetrahedra}
  This description of hyperbolic 3-manifolds in terms of ideal hyperbolic tetrahedra can be generalized to other geometries. In \cite{DaPhD, Da2} Danciger introduced a generalized notion of ideal tetrahedra in 3d anti-de Sitter and 3d half-pipe spaces and studied a generalized version of Thurston's gluing equations.
  
  Denoting by $\Y_\Lambda$ the 3d hyperbolic space for $\Lambda>0$, the 3d anti-de Sitter space for $\Lambda<0$ and the 3d half-pipe space for $\Lambda=0$, one can describe these generalized ideal tetrahedra as geodesic tetrahedra in $\Y_\Lambda$ with vertices at the ideal boundary $\partial_\infty\Y_\Lambda$ and with {\em spacelike} edges. The additional condition that the edges are spacelike imposes restrictions on the relative position of the vertices at the asymptotic boundary. Nonetheless, generalized ideal tetrahedra are also parametrized, up to isometries, by a single shape parameter, now taking values in the ring of generalized complex numbers $\bbC_\Lambda$. See also \cite{Luo}, for a general discussion of gluing equations over commutative rings.
  
  Generalized ideal tetrahedra share many properties with their hyperbolic counterparts and thus offer the prospect to generalize results and constructions from hyperbolic geometry to 3d anti-de Sitter and half-pipe geometry. In particular, they were applied by Danciger in \cite{DaPhD,Da2} to construct geometric transitions between hyperbolic and anti-de Sitter structures, going through half-pipe structures, and were also used as building blocks for the study of more general polyhedra in \cite{DMS}. 
  
  A particularly interesting quantity in this respect is the hyperbolic volume. The volume of a generalized ideal tetrahedron can be defined as the integral of a 3-form invariant under the action of the isometry group, which is unique up to global rescaling. However, so far there is no anti-de Sitter or half-pipe analogue of the Milnor-Lobachevsky formula for the volume in this setting. This raises
  
  {\bf Question 1:} Is there a simple formula for the volume of a generalized ideal tetrahedron in $\Y_\Lambda$ as a function of its shape parameter and the parameter $\Lambda$ that controls the geometric transitions?

  \paragraph{\bf 3d Lorentzian geometry} Another strong motivation to investigate generalized ideal tetrahedra is the close relation between structures from 2d and 3d hyperbolic geometry and 3d Einstein geometry in Lorentzian signature. Every 3d Lorentzian Einstein manifold $M$ is locally isometric to a homogeneous and isotropic Lorentzian 3d manifold $\X_\Lambda$ of constant curvature $\Lambda$, namely the 3d de Sitter space for $\Lambda>0$, the 3d anti-de Sitter space for $\Lambda<0$ and the 3d Minkowski space for $\Lambda=0$. The geometry of  $M$ can then be described by geometric structures modeled on $\X_\Lambda$ and with structure group $G_\Lambda=\Isom_0(\X_\Lambda)$, that is, by an atlas of coordinate charts valued in $\X_\Lambda$ with isometric transition functions.
  
  Under additional assumptions on causality, namely maximal global hyperbolicity and the completeness of a Cauchy surface $S$, there is a full classification result \cite{Mess,Scannell,Barbot,Benedetti-Bonsante}, which characterizes the 3d Einstein manifolds in terms of structures from 2d and 3d hyperbolic geometry. More specifically, it identifies the moduli space $\cG\cH_\Lambda(M)$ of maximal globally hyperbolic Einstein metrics, modulo isotopy, on a 3-manifold $M=\R\times S$ with the bundle $\cM\cL(S)$ of bounded measured geodesic laminations over the Teichm\"uller space $\cT(S)$ of the Cauchy surface.
  
  For each value of $\Lambda$, this identification is given by a Lorentzian counterpart of the grafting construction from 3d hyperbolic geometry. Moreover, the Lorentzian grafting construction is directly related to hyperbolic grafting via the Wick-rotation and rescaling theory developed by Benedetti and Bonsante \cite{Benedetti-Bonsante}. It was also shown by the first author in \cite{Me} that these constructions admit a unified description via the ring of generalized complex numbers $\bbC_\Lambda$.

  \paragraph{\bf Symplectic structures and mapping class group actions}
  The moduli spaces $\cG\cH_\Lambda(M)$ admit a symplectic structure induced by Goldman's symplectic structure \cite{G1,G2} on the spaces of holonomies $\Hom(\pi_1(S), G_\Lambda)/G_\Lambda$. This is a natural Lorentzian generalization of (the imaginary part of) Goldman's symplectic structure on the moduli space of quasi-Fuchsian hyperbolic 3-manifolds or, more generally, the moduli space of hyperbolic end 3-manifolds. In fact, these structures are closely related via Wick-rotation and rescaling theory. More precisely, it was shown by the second author in joint work with Schlenker  \cite{SS}, that Wick rotations induce symplectic diffeomorphisms between the moduli spaces $\cG\cH_\Lambda(M)$ and the moduli space of hyperbolic end 3-manifolds for all values of $\Lambda$.
  
  In \cite{MSc} we showed that these symplectic structures can be given a unified description in terms of $\bbC_\Lambda$-valued shear coordinates associated with ideal triangulations of a punctured Cauchy surface.  This description generalizes the Weil-Petersson symplectic structure on Teichm\"uller space $\cT(S)$, and leads to a simple description of the mapping class group action in terms of 2d Whitehead moves. Interestingly, they involve $\bbC_\Lambda$-analytic continuations of classical dilogarithms, which suggests a close relation to the volumes of ideal hyperbolic tetrahedra.

  \paragraph{\bf Generalized ideal tetrahedra and their duals} 
  The role of hyperbolic structures in 3d Lorentzian geometry suggests that there should be a distinguished class of tetrahedra in 3d de Sitter, Minkowski and anti-de Sitter space with structural similarities to ideal tetrahedra, such as a simple description in terms of shape parameters.

  {\bf Question 2:} Are there analogues of generalized ideal tetrahedra in the spaces $\X_\Lambda$ with similar geometric properties?

  If the answer to this question is yes, one may generalize Question 1 to these tetrahedra and ask whether the geometry of these tetrahedra is simple enough to admit a volume formula in terms of simple quantities such as shape parameters and similar to the Milnor-Lobachevsky formula.

  {\bf Question 3:} Is there a simple volume formula for these tetrahedra in $\X_\Lambda$?

  In this article, we show that the answers to these three questions are positive.  More specifically, we show that the analogues of generalized ideal tetrahedra in the Lorentzian spaces $\X_\Lambda$ are the geodesic tetrahedra whose faces lie in  lightlike geodesic planes.

  We also find that they are related to Danciger's generalized ideal tetrahedra from \cite{Da2} via the projective duality between the spaces $\X_\Lambda$ and $\Y_\Lambda$ (Theorem \ref{prop:dualprop}).  This duality pairs points in one space with  (totally)  geodesic spacelike planes in the other.  It admits a natural extension to the ideal boundary, which assigns points in $\partial_\infty\Y_\Lambda$ to lightlike geodesic planes in $\X_\Lambda$, and hence pairs generalized ideal tetrahedra in $\Y_\Lambda$ and lightlike tetrahedra in $\X_\Lambda$.

  We achieve this via a unified description of the spaces $\X_\Lambda$ and $\Y_\Lambda$ in terms of $2\times 2$-matrices with entries in $\bbC_\Lambda$. This description leads to simple expressions for the geodesics, geodesic planes, metrics and isometry group actions on both spaces, and also for the ideal boundary of $\Y_\Lambda$. It allows us to parametrize both lightlike and ideal tetrahedra, to investigate their geometry in detail and to explicitly relate them.
  
  In particular, we show in Proposition \ref{prop:ltetrahedron} that lightlike tetrahedra are also parameterized by pair of real parameters $\alpha,\beta\in\R$ or, equivalently, by a generalized complex number $z\in \bbC_\Lambda$. These parameters have simple geometric interpretations, analogous to the ones for ideal tetrahedra.  For example, the parameters  $|\alpha|,|\beta|,|\alpha+\beta|$ represent edge lengths of the lightlike tetrahedron, with opposite edges having equal length. Under duality, these lengths correspond to the dihedral angles of the dual ideal tetrahedron.
  
  \paragraph{\bf Volumes of generalized ideal tetrahedra and their duals}
  We also apply the explicit parametrization of lightlike and ideal tetrahedra to derive a unified formula for their volumes as a function of the parameters $\alpha,\beta$. For a generalized ideal tetrahedron  $ I \subset \Y_\Lambda$ the resulting formula in Theorem \ref{prop:volideal} is a generalization of the Milnor-Lobachevsky volume formula for ideal hyperbolic tetrahedra, involving $\Lambda$ as a deformation parameter
  \begin{align*}
  \vol( I )=\frac{1}{2} \Big(\Cl_\Lambda(2\alpha)+\Cl_\Lambda(2\beta)+\Cl_\Lambda(2\gamma)\Big),\qquad \alpha+\beta+\gamma = 0.
  \end{align*}
  Here, $\Cl_\Lambda$ is a generalized Clausen function. It coincides with the usual Clausen function for $\Lambda>0$, the hyperbolic Clausen function for $\Lambda<0$ and the integral of a logarithmic function for $\Lambda=0$.
   The volume computation for a lightlike tetrahedron $ L \subset\X_\Lambda$ is more involved and is achieved in Theorem \ref{prop:dualvol}. The result is again a very simple expression involving $\Lambda$ as a deformation parameter 
  \begin{align*}
  \vol( L )=&\frac{1}{2\Lambda}\Big(\Cl_\Lambda(2\alpha)+\Cl_\Lambda(2\beta)+\Cl_\Lambda(2\gamma)\Big)
  \cr
  +&\;\frac 1 {\Lambda}\,\Big( \alpha\log|s_\Lambda(\alpha)|+\beta\log|s_\Lambda(\beta)|+\gamma \log|s_\Lambda(\gamma)|\Big),
  \end{align*}
  with $s_\Lambda$ given by the sine function for $\Lambda>0$ and the hyperbolic sine function for $\Lambda<0$. The volume for $\Lambda=0$  can be either computed directly or as the limit $\Lambda\to 0$ from a power series expansion around $\Lambda=0$  and reads $\vol( L )=-\alpha\beta\gamma/3$.

  \setlength{\parskip}{0pt}
  \tableofcontents
  
  \setlength{\parindent}{0pt}
  \setlength{\parskip}{7pt plus 2pt minus 1pt}

  \section{Lorentzian 3d geometries and their duals}
  
  \subsection{Projective models}
  
  In this section, we describe the 3d Lorentzian geometries considered in this article and their duals. We work with a projective formulation that identifies these spaces with subsets of $\RP^3$. 
  We denote by $\R^{p,q,r}$ the vector space $\R^{p+q+r}$ endowed with the symmetric bilinear form of signature $(p,q,r)$
  \begin{align}\label{eq:blf}
  \langle x,x\rangle_{p,q,r}=-(x_1)^2-\cdots-(x_p)^2+(x_{p+q+1})^2+\cdots+(x_{p+q+r})^2.
  \end{align}
  A vector $x\in \R^{p,q,r}$ is called timelike if $\langle x,x\rangle<0$, spacelike if $\langle x,x\rangle>0$ and lightlike if $x\neq 0$ and $\langle x,x\rangle=0$. 
  We call it a unit vector or normalized if $|\langle x,x\rangle|=1$ or if $|\langle x,x\rangle|=0$.

  \paragraph{\bf Anti-de Sitter space}
  
  The Klein model of 3d anti-de Sitter space can be defined as the space of timelike lines through the origin in $\R^{2,0,2}$
  \begin{align}\label{eq:ads}
  \AdS^3=\Rquotient{\Big\{x\in\R^{2,0,2} \mid \langle x,x\rangle_{2,0,2}<0\Big\}}{\R^\times}\subset \RP^3.
  \end{align}
  This can also be seen as the quotient of the hyperboloid of unit timelike vectors in $\R^{2,0,2}$ by the antipodal map and thus inherits a Lorentzian metric of constant sectional curvature $-1$.
  
  The group of orientation preserving isometries of $\AdS^3$ is   $\PO_0(2,2)\cong\PSL(2,\R)\times\PSL(2,\R)$. It acts transitively on $\AdS^3$.
  The full group of isometries of $\AdS^3$ is the group $\mathrm{PO}(2,2)\subset \PGL(2,\R)\times\PGL(2,\R)$. It is a double cover of $\PO_0(2,2)$ and is generated by $\PO_0(2,2)$ together with the isometry $[(x_1,x_2,x_3,x_4)]\mapsto [(x_1,-x_2,x_3, x_4)]$.

  \paragraph{\bf de Sitter space}
  The Klein model of 3d de Sitter space can be defined similarly as the space of spacelike lines through the origin in $\R^{1,0,3}$
  \begin{align}\label{eq:ds}
  \dS^3=\Rquotient{\Big\{x\in\R^{1,0,3} \mid \langle x,x\rangle_{1,0,3}>0\Big\}}{\R^\times}\subset \RP^3.
  \end{align}
  It is the quotient of the hyperboloid of unit spacelike vectors in $\R^{1,0,3}$ by the antipodal map and thus inherits a Lorentzian metric with sectional curvature $+1$. Note that with this definition $\dS^3$ is orientable, but not time orientable.
  
  The group of orientation preserving isometries is $\PO_0(1,3)\cong\PGL(2,\bbC)$. It acts transitively on $\dS^3$. The full isometry group is the group $\PO(1,3)$,  generated by $\PO_{0}(1,3)$ and $[(x_1,x_2,x_3,x_4)]\mapsto [(x_1,-x_2,x_3, x_4)]$.

  \paragraph{\bf Minkowski space}
  We also consider a Klein model of 3d Minkowski space. This is defined as the space of lines through the origin in $\R^{1,1,2}$ transversal to the hyperplane $x_2=0$
  \begin{align}\label{eq:mink}
  \Mink^3=\Rquotient{\Big\{x\in\R^{1,1,2} \mid (x_2)^2>0\Big\}}{\R^\times}\subset \RP^3.
  \end{align}
  As $\Mink^3$ can be identified with the hyperplane $H=\{x\in \R^{1,1,2}\mid x_2=1\}$, it inherits a Lorentzian metric of sectional curvature 0.
  The group of orientation preserving isometries of $\Mink^3$ is the Poincar\'e group in 3 dimensions $\PO_0(1,1,2)=\PO_0(1,2)\ltimes\R^{1,2}\cong\PSL(2,\R)\ltimes\sl(2,\R)$. It acts transitively on $\Mink^3$. 
  The full isometry group of $\Mink^3$  is the group $\PO(1,1,2)=\PO(1,2)\ltimes\R^{1,2}\cong \PGL(2,\R)\ltimes\sl(2,\R)$. It is generated by $\PO_0(1,1,2)$ and the isometry $[(x_1,x_2,x_3,x_4)]\mapsto [(x_1,-x_2,x_3, x_4)]$.
  
  In the following, we denote these three projective quadrics in $\RP^3$ by $\X_\Lambda$, where  $\Lambda\in\{-1,0,1\}$ is the sectional curvature of the quadric
  \begin{align*}
  \X_\Lambda=\begin{cases} \AdS^3, & \Lambda=-1,\\
  \dS^3, & \Lambda=1,\\
  \Mink^3, & \Lambda=0.
  \end{cases}
  \end{align*}

  \paragraph{\bf Dual models} 
   The projective quadrics $\X_\Lambda\subset\RP^3$  can also be characterized by their duality to three other projective quadrics  $\Y_\Lambda\subset\RP^3$ for $\Lambda=-1,0,1$. 
  The latter are defined
  as the spaces of timelike lines through the origin in $\R^4$  
  \begin{align}\label{eq:y}
  \Y_\Lambda=\Rquotient{\Big\{y\in\R^4 \mid \langle y,y\rangle_\Lambda<0\Big\}}{\R^\times}\subset \RP^3.
  \end{align}
  with respect to the 
   symmetric bilinear form
  \begin{align}\label{eq:yform}
  \langle y,y\rangle_{\Lambda}=-y_1^2+\Lambda y_2^2+y_3^2+y_4^2.
  \end{align}
  As  $\Y_\Lambda$ is the quotient of the set of timelike unit vectors for $\langle \cdot, \cdot\rangle_\Lambda$ by the antipodal map, it also inherits a constant curvature metric. For $\Lambda=-1$, this is again a Lorentzian metric of sectional curvature $-1$, and  $\Y_{-1}$ is identical to $\X_{-1}=\AdS^3$. For $\Lambda=1$ one obtains a Riemannian metric of sectional curvature $-1$, and $\Y_{1}$ is the Klein model of 3d hyperbolic space $\bbH^3$.
  For $\Lambda=0$ one has a degenerate metric of signature $(0,0,2)$, and  $\Y_{0}=\bbH^2\times\R$ is the product of 2d hyperbolic space with the real line, the so called co-Minkowski or half-pipe space, see for instance \cite{DaPhD,Da1,BF, Fillastre-Seppi}. Thus,
   \begin{align*}
  \Y_\Lambda=\begin{cases} 
  \AdS^3, & \Lambda=-1,\\
  \bbH^3, & \Lambda=1,\\
  \bbH^2\times\R, & \Lambda=0.
  \end{cases}
  \end{align*}
  For each value of $\Lambda$, the isometry group of $\Y_\Lambda$ agrees with the isometry group of  $\X_\Lambda$. The isotropy groups, however, are different.
  
  \subsection{Projective duality}\label{sec:projdual1}
  Geodesics lines and geodesic planes in $\X_\Lambda$ and $\Y_\Lambda$ are obtained as the intersections of $\X_\Lambda$ and $\Y_\Lambda$ with projective lines and with projective planes in $\RP^3$.  The latter are the projections of 2d and 3d linear subspaces of $\R^4$ to $\RP^3$.
  As usual, a geodesic in $\X_\Lambda$ or $\Y_\Lambda$  is called timelike, lightlike or spacelike if its tangent vectors are timelike, lightlike or spacelike.   A geodesic plane in $\X_\Lambda$ or $\Y_\Lambda$ is called timelike, if it contains a timelike geodesic, spacelike if all of its geodesics are spacelike, and lightlike, if it contains a lightlike but no timelike geodesics.
  
  The projective duality between $\X_\Lambda$ and $\Y_\Lambda$ is a bijection between points in one space and (totally) geodesic spacelike planes in the other. For $\Lambda \neq 0$, it is induced by orthogonality with respect to the ambient bilinear form $\langle\cdot,\cdot\rangle_\Lambda$ on $\R^4$ from \eqref{eq:yform}.
  To a point $[x]\in\X_\Lambda$ it assigns the spacelike plane $x^*\subset\Y_\Lambda$ and to a 
  point  $[y]\in \Y_\Lambda$  the spacelike plane $y^*\subset \X_\Lambda$  with
  \begin{align}\label{eq:duality}
  x^*:=\Big\{[y]\in\Y_\Lambda \mid \langle x,y\rangle_\Lambda=0\Big\},
  & &
  y^*:=\Big\{[x]\in\X_\Lambda \mid \langle x,y\rangle_\Lambda=0\Big\},
  \end{align}
  where $[x],[y]\in\RP^3$ denote the equivalence classes of  $x,y\in \R^4$ in $\RP^3$. 
  This duality also induces a bijection between spacelike geodesics in $\X_\Lambda$ and in $\Y_\Lambda$. It assigns to a spacelike geodesic $g$   the intersection $p^*\cap q^*$  for any two points $[p],[q] \in g$. This intersection is a spacelike geodesic and independent of the choice of $[p]$, $[q]$ in $g$.
  
 For $\Lambda=0$ the ambient bilinear form $\langle\cdot,\cdot\rangle_\Lambda$ becomes degenerate and the duality cannot be directly interpreted in terms of orthogonality. One can, however, understand the duality for $\Lambda=0$ as a limit of the other two cases via certain blow-up procedures, see \cite{Fillastre-Seppi}. The duality between points and geodesic planes in $\X_0$ and $\Y_0$ is then given by
    \begin{align}\label{eq:duality_minkowski}
      x^*:=\Big\{[y]\in\Y_0 \mid \langle x,y\rangle_0= x_2y_2\Big\},
      & &
      y^*:=\Big\{[x]\in\X_0 \mid \langle x,y\rangle_0=x_2y_2\Big\}.
  \end{align}
  The geometric interpretation of the duality is the following. Half-pipe space $\Y_0=\bbH^2\times\R$ can be identified with the set of spacelike affine planes in Minkowski space, whose normal vector is given by a point in $\bbH^2$ and whose offset in the direction of the normal vector by a real parameter. The duality sends a point in $\Y_0$ to the associated spacelike affine plane in $\Mink^3$. Conversely, a point $x\in \Mink^3$  is dual to the graph of the map $f: \bbH^2\to \R$, $n\mapsto\langle x,n\rangle_{1,1,2}$, which  
 defines a spacelike geodesic plane in half-pipe space.

  The duality between points and geodesic planes extends to more general convex subsets $\X_\Lambda$ and $\Y_\Lambda$.
    A set in $\R P^3$ is called \emph{convex} if it is the projection of a convex cone in $\R^{4}$ that contains no non-trivial linear subspace. The projective dual of a convex set is then defined as the projection of the corresponding dual cone.
    
  Convex sets in $\X_\Lambda$ and $\Y_\Lambda$ can then be defined as the restriction of convex set in $\R P^3$ to each of these projective quadrics. The projective duality can thus be defined with respect to the duality between of convex cones in $\R^4$. We refer the reader to \cite{Fillastre-Seppi} for more details.
Geometrically, the dual of a convex set can also be characterized as the set of spacelike geodesic planes which \emph{do not intersect} the convex set.

  \subsection{Ideal points and lightlike planes}\label{subsec:ideldual}
  
  The spaces $\Y_\Lambda$ admit a natural compactification in the projective quadric model. 
  Namely, we can consider the closure of $\Y_\Lambda$ in $\RP^3$, given by
  \begin{align*}
  \overline{\Y}_\Lambda=\Rquotient{\Big\{y\in \R^4\setminus\{0\} \mid \langle y,y\rangle_\Lambda\leq 0\Big\}}{\R^\times}.
  \end{align*}
  Its boundary in  $\RP^3$ is the projective lightcone
  \begin{align*}
  \partial_\infty\Y_\Lambda=\partial \overline{\Y}_\Lambda=\Rquotient{\Big\{y\in\R^4\setminus\{0\} \mid \langle y,y\rangle_\Lambda=0\Big\}}{\R^\times}.
  \end{align*}
  This can be viewed as the asymptotic ideal boundary of $\Y_\Lambda$. It generalizes the description of the boundary $\partial\mathbb H^3$ as the set of lightlike rays in $\R^{1,0,3}$.  We will see in Section \ref{sec:idealbound}
  that the ideal boundary $\partial_\infty\Y_\Lambda$  can be identified with $\RP^1\times\RP^1$ for $\Lambda=-1$, with $\CP^1$ for $\Lambda=1$ and with $\RP^1\times \R$ for $\Lambda=0$. 
  
  The projective duality \eqref{eq:duality} between points and spacelike planes in $\X_\Lambda$ and $\Y_\Lambda$ admits a natural extension to a duality between points $[y]\in\partial_\infty\Y_\Lambda$ and lightlike planes $y^*\subset\X_\Lambda$, given again by \eqref{eq:duality}.

  \section{3d geometries via generalized complex numbers}
  
  In this section, we give a unified description of the projective quadrics $\X_\Lambda$ and $\Y_\Lambda$ in terms of $2\times 2$-matrices with entries in a commutative real algebra $\bbC_\Lambda$, whose multiplication depends on  $\Lambda$. 
  For the spaces $\Y_\Lambda$, this description was introduced in \cite{DaPhD, Da1,Da2}. For the spaces $\X_\Lambda$ similar descriptions were considered by the first author in \cite{Me, MS} and by both authors in \cite{MSc}. In Sections \ref{subsec:genc} to  \ref{sec:tangent} we summarize the results from \cite{DaPhD,Da1,Da2}  and \cite{Me,MS,MSc} and combine both descriptions in a common framework. 
  In Section \ref{sec:planepar}  we derive simple parametrizations of geodesics and geodesic planes in these spaces, which are applied in Section \ref{sec:lightplane} to investigate the geometry of lightlike geodesic planes in $\X_\Lambda$. Section \ref{sec:idealbound} summarizes Danciger's description of the ideal boundary from \cite{DaPhD, Da1, Da2}   and interprets his results in terms of Lorentzian geometry by duality with the spaces $\X_\Lambda$.

  \subsection{Generalized complex numbers}\label{subsec:genc}
  For any   $\Lambda\in\R$ we define the ring of generalized complex numbers $\bbC_\Lambda$ as the quotient of the polynomial ring in one variable $\ell$ by the ideal generated by  $\ell^2+\Lambda$
  \begin{align*}
  \bbC_\Lambda=\rquotient{\R[\ell]}{(\ell^2+\Lambda)}.
  \end{align*}
  Elements in $\bbC_\Lambda$ can thus be parametrized uniquely as $z=x+\ell y$, with real $x,y$  and $\ell^2=-\Lambda$. 
  We write $x=\Re(z)$ and $y=\Im(z)$ and refer to $x$ and $y$ as the real and imaginary parts of $z\in\bbC_\Lambda$. We also define generalized complex conjugates by $\overline z=x-\ell y$ and the modulus
    $|z|^2=z\bar z$.
  
  Note that, up to isomorphisms, $\bbC_\Lambda$ only depends on the sign of $\Lambda$. 
  We therefore restrict attention to  $\Lambda=1,0,-1$.
  For $\Lambda=1$, this yields the field $\bbC$ of complex numbers, and for  $\Lambda=0,-1$ the dual numbers and hyperbolic numbers, respectively.
  Note that for $\Lambda=0, -1$ the ring  $\bbC_\Lambda$ is not a field, as there are nontrivial zero divisors. These are real multiples of $\ell$ for $\Lambda=0$ and real multiples of   $1\pm\ell$ for $\Lambda=-1$. 
  The group   of units in $\bbC_\Lambda$ is 
  \begin{align*}
  \bbC_\Lambda^\times=\Big\{z\in\bbC_\Lambda\mid \,|z|^2=z\overline z\neq 0\Big\}.
  \end{align*}
  The real algebra $\bbC_\Lambda$   becomes a 2d Banach algebra for all values of $\Lambda$ when equipped with an appropriate norm. This allows one to consider 
   power series and analytic functions on $\bbC_\Lambda$ and on the algebras  $\mathrm{Mat}(n,\bbC_\Lambda)$ of $n\times n$ matrices with entries in $\bbC_\Lambda$. 
  In particular, any real analytic function $f:I\to\R$ on an open interval $I\subset \R$ can be extended to a unique analytic function $F:\Omega\to\bbC_\Lambda$ on an appropriate open set $I\subset \Omega\subset \bbC_\Lambda$, via
  \begin{align*}
  F(x+\ell y)
  =\begin{cases}
  \tfrac{1+\ell}{2}f(x+y) +\tfrac{1-\ell}{2}f(x-y), & \Lambda=-1,\\
  f(x+\im y), & \Lambda=1,\\
  f(x)+\ell f'(x)y, & \Lambda=0.
  \end{cases}
  \end{align*}
  The analytic continuation $F$ satisfies a generalization of the Cauchy-Riemann equations on $\Omega$
  \begin{align*}
  \frac{\partial \Re F}{\partial x}=\frac{\partial \Im F}{\partial y},\qquad\qquad \frac{\partial \Re F}{\partial y}=-\Lambda \frac{\partial \Im F}{\partial x}.
  \end{align*}
  Using the exponential map, we define generalized trigonometric functions $c_\Lambda,s_\Lambda:\R\to \R$ by
  \begin{align}\label{eq:expellsincos}
  \exp(\ell \theta)=c_\Lambda(\theta)+\ell s_\Lambda(\theta),
  \end{align}
  which yields
  \begin{align*}
  c_\Lambda(\theta)
  =\begin{cases}
  \cosh(\theta), & \Lambda=-1,\\
  \cos(\theta), & \Lambda=1,\\
  1, & \Lambda=0,
  \cr
  \end{cases}
  \qquad\qquad
  s_\Lambda(\theta)
  =\begin{cases}
  \sinh(\theta), & \Lambda=-1,\\
  \sin(\theta), & \Lambda=1,\\
  \theta, & \Lambda=0.
  \cr
  \end{cases}
  \end{align*}
  They satisfy the following generalized trigonometric identities
  \begin{align}\label{eq:trigids}
  &c_\Lambda^2(\theta)+\Lambda s_\Lambda^2(\theta)=1,
  \cr
  &c_\Lambda(\theta)c_\Lambda(\varphi)-\Lambda s_\Lambda(\theta)s_\Lambda(\varphi)=c_\Lambda(\theta+\varphi),
  \cr
  &c_\Lambda(\theta)s_\Lambda(\varphi)+s_\Lambda(\theta)c_\Lambda(\varphi)=s_\Lambda(\theta+\varphi),
  \end{align}
  and their derivatives are given by
  \begin{align}\label{eq:trigdev}
  \dot c_\Lambda(\theta)=-\Lambda s_\Lambda(\theta),\qquad\qquad \dot s_\Lambda(\theta)=c_\Lambda(\theta).
  \end{align}
  We also introduce the generalized tangent and cotangent functions
  \begin{align}\label{eq:tandef}
  t_\Lambda(\theta)=\frac{s_\Lambda(\theta)}{c_\Lambda(\theta)}=\begin{cases}
  \tanh(\theta), & \Lambda=-1,\\
  \tan(\theta), & \Lambda=1,\\
  \theta, & \Lambda=0,
  \end{cases}
  \qquad\qquad
   ct_\Lambda(\theta)=\frac{1}{t_\Lambda(\theta)},
  \end{align}
  and denote by $t_\Lambda^\inv$ and $ct_\Lambda^\inv$ their inverse functions with $t_\Lambda^\inv(r)\in(-\tfrac \pi 2, \tfrac \pi 2)$ and $ct_\Lambda^\inv(r)\in(0,\pi)$  if $\Lambda=1$.

  \subsection{A unified description of \texorpdfstring{$\X_\Lambda$}{X Lambda} and \texorpdfstring{$\Y_\Lambda$}{Y Lambda} 
  } 
  To obtain a unified description of the quadrics $\X_\Lambda$ and $\Y_\Lambda$,  we consider the ring $\Mat(2,\bbC_\Lambda)$ of  $2\times 2$-matrices with entries in $\bbC_\Lambda$. 
  This allows one to identify the orientation preserving isometry groups of the projective quadrics $\X_\Lambda$ and $\Y_\Lambda$ 
  with the projective linear group over $\bbC_\Lambda$, see \cite{DaPhD}
  \begin{align*}
  \PGL^+(2,\bbC_\Lambda)&=\rquotient{\Big\{A\in\Mat(2,\bbC_\Lambda) \mid |\det A|^2>0\Big\}}{\bbC_\Lambda^\times}.
  \end{align*}

  More explicitly, the group isomorphisms between $\PGL^+(2,\bbC_\Lambda)$ and the orientation preserving isometry groups of $\X_\Lambda$ and $\Y_\Lambda$ are given by 
  \begin{align*}
  \PGL^+(2,\bbC_\Lambda)\to 
  \begin{cases}
  \PGL^{2,+}(2,\R)
  \cr
  \PSL(2,\bbC), 
  \cr
  \PGL(2,\R)\ltimes\sl(2,\R), 
  \end{cases}
  \quad\quad
  R+\ell I\mapsto \begin{cases}
  (R+I,R-I), 
  &\Lambda=-1.
  \cr
  R+\im I, 
  &\Lambda=1,
  \cr
  (R,R^\inv I), 
  &\Lambda=0,
  \end{cases}
  \end{align*}
   where $\PGL^{2,+}(2,\R)$ consists of pairs $(A,B)\in\PGL(2,\R)\times\PGL(2,\R)$ with $\det AB>0$.
  
  The description of   the projective quadrics $\X_\Lambda$ and $\Y_\Lambda$ in terms of matrices with entries in $\bbC_\Lambda$ is obtained from a pair of involutions $\circ,\dag: \Mat(2,\bbC_\Lambda)\to\Mat(2,\bbC_\Lambda)$, given by
  \begin{align*}\left(\begin{matrix} a & b \cr c & d\end{matrix}\right)^\circ=\left(\begin{matrix} \overline d & -\overline b \cr -\overline c & \overline a\end{matrix}\right),
  \qquad\qquad
  \left(\begin{matrix} a & b \cr c & d \end{matrix}\right)^\dag=\left(\begin{matrix} \overline a & \overline c \cr \overline b& \overline d\end{matrix}\right).
  \end{align*}
  The sets of fixed points under these involutions are four-dimensional real vector spaces.  The spaces $\X_\Lambda$ and $\Y_\Lambda$ can then be realized as
  their subsets of positive determinant matrices modulo rescaling
  \begin{align}\label{eq:xmatpar}
  &\X_\Lambda=\Rquotient{\Big\{x\in\Mat(2,\bbC_\Lambda) \mid x^\circ=x, \; \det(x)>0\Big\}}{\R^\times},\\
  \label{eq:ymatpar}
  &\Y_\Lambda=\Rquotient{\Big\{y\in\Mat(2,\bbC_\Lambda) \mid y^\dag=y, \; \det(y)>0\Big\}}{\R^\times}.
  \end{align}
  
  Explicitly, the identification of the quadrics $\X_\Lambda$  from \eqref{eq:ads}, \eqref{eq:ds} and \eqref{eq:mink} with
   \eqref{eq:xmatpar}  is given by the linear map
  \begin{align}\label{eq:xmatrixrp}
  &\phi_X:\R^4\to \mathrm{Mat}(2,\bbC_\Lambda),
  \qquad\qquad
  (x_1,x_2,x_3,x_4)\mapsto \begin{pmatrix}x_2+\ell x_4 & \ell(x_3-x_1)\\
  \ell(x_3+x_1) & x_2-\ell x_4
  \end{pmatrix},
  \end{align}
  and the identification of the quadrics $\Y_\Lambda$ from \eqref{eq:y} with  \eqref{eq:ymatpar} by 
  \begin{align}\label{eq:ymatrixrp}
  &\phi_Y:\R^4\to \mathrm{Mat}(2,\bbC_\Lambda),
  \qquad\qquad
  (y_1,y_2,y_3,y_4)\mapsto \begin{pmatrix} y_1+y_3 & y_4+\ell y_2\\  y_4-\ell y_2 & y_1-y_3\end{pmatrix}.
  \end{align}
  These maps identify $\R^4$ with the set of matrices $A,B\in \mathrm{Mat}(2,\bbC_\Lambda)$ satisfying $A=A^\circ$ and $B=B^\dag$, respectively.  With these identifications, the action of the group $\PGL^+(2,\bbC_\Lambda)$ on $\X_\Lambda$ and  $\Y_\Lambda$  takes the form
  \begin{align}\label{eq:actiony}
  &\rhd: \PGL^+(2,\bbC_\Lambda)\times \X_\Lambda\to \X_\Lambda,\qquad\qquad A\rhd x=A x A^\circ,
  \\
  &\rhd: \PGL^+(2,\bbC_\Lambda)\times \Y_\Lambda\to \Y_\Lambda,\qquad\qquad B\rhd y=B y B^\dag.\nonumber
  \end{align}
  The full isometry group of  $\X_\Lambda$ and $\Y_\Lambda$ is 
  generated by $\PGL^+(2,\bbC_\Lambda)$ together with generalized complex conjugation.

  The fact that $\PGL^+(2,\bbC_\Lambda)$  acts transitively on the spaces $\X_\Lambda$ and $\Y_\Lambda$ can then be seen as a consequence of the following lemma.

  \begin{lemma}\label{lem:unitpt}
  For any point  $x\in \X_\Lambda$ and  $y\in \Y_\Lambda$, there are isometries $A,B\in \PGL^+(2,\bbC_\Lambda)$ such that $x=A\rhd\mathbb{1}=AA^\circ$ and $y=B\rhd\mathbb{1}=BB^\dag$. They can be chosen to satisfy $A^\circ=A$ and $B^\dag=B$. 
  \end{lemma}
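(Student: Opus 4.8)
The plan is to prove both parts by the same explicit square-root construction, carried out for $\X_\Lambda$ with the involution $\circ$ and for $\Y_\Lambda$ with $\dag$; the two arguments are word-for-word the same. Since the requirement $A^\circ=A$ already forces $AA^\circ=A^2$, it suffices to show that every $[x]\in\X_\Lambda$ has a representative $x$ (with $x^\circ=x$, $\det x>0$) admitting a ``square root'' $A=A^\circ$ with $\det A>0$, and similarly for $\Y_\Lambda$.

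I would first collect the few ingredients needed. The involution $\circ$ is $\R$-linear and satisfies $\mathbb{1}^\circ=\mathbb{1}$; by the identifications \eqref{eq:xmatpar}, \eqref{eq:xmatrixrp} the fixed-point set $\{x:x^\circ=x\}$ is a real vector space of matrices on which $\tr$ and $\det$ take real values, and $[x]\in\X_\Lambda$ is exactly the condition $\det x>0$. Replacing $x$ by $-x$ if necessary, I may assume the representative has $\tr x\ge 0$. Put $p:=\sqrt{\det x}>0$ and $s:=\sqrt{\tr x+2p}$; since $\det x>0$ strictly, $\tr x+2p>0$, so $s$ is a well-defined nonzero real number. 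Then I define
\[
A:=\tfrac{1}{s}\bigl(x+p\,\mathbb{1}\bigr).
\]

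I would then check three things, each a one-line computation. By $\R$-linearity of $\circ$ together with $\mathbb{1}^\circ=\mathbb{1}$ and $x^\circ=x$, one gets $A^\circ=A$, so $A$ lies in the $\circ$-fixed subspace. The $2\times 2$ identity $\det(x+p\,\mathbb{1})=\det x+p\,\tr x+p^2=p^2+p\,\tr x+p^2=p\,s^2$ gives $\det A=p>0$, whence $|\det A|^2=p^2>0$ and $A\in\PGL^+(2,\bbC_\Lambda)$. Finally, Cayley--Hamilton in the form $x^2=(\tr x)\,x-(\det x)\,\mathbb{1}=(\tr x)\,x-p^2\,\mathbb{1}$ yields $\bigl(x+p\,\mathbb{1}\bigr)^2=x^2+2p\,x+p^2\,\mathbb{1}=(\tr x+2p)\,x=s^2x$, hence $AA^\circ=A^2=x$ and $[x]=A\rhd[\mathbb{1}]$. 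The statement for $\Y_\Lambda$ follows by the identical argument with $\circ$, \eqref{eq:xmatpar}, \eqref{eq:xmatrixrp} replaced by $\dag$, \eqref{eq:ymatpar}, \eqref{eq:ymatrixrp}.

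The one subtle point will be the positivity $\tr x+2p>0$, which is what allows the real square root $s$ to be extracted: it rests both on normalizing the sign of the representative so that $\tr x\ge 0$ and, more importantly, on the strictness of $\det x>0$ in the definition of $\X_\Lambda$ and $\Y_\Lambda$ (so that $p>0$). Everything else is routine $2\times 2$ linear algebra over the commutative ring $\bbC_\Lambda$, and transitivity of the $\PGL^+(2,\bbC_\Lambda)$-actions on $\X_\Lambda$ and $\Y_\Lambda$ is then immediate.
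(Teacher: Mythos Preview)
Your proof is correct and takes essentially the same approach as the paper's: both construct $A$ as a real scalar multiple of $x+p\,\mathbb{1}$ and use the $2\times 2$ Cayley--Hamilton identity to see that $(x+p\,\mathbb{1})^2$ is a positive real multiple of $x$. The paper first normalizes the representative so that $\det x'=1$ (hence $p=1$) and is content with $A'^2=\det(A')\,x'$ projectively, whereas you keep $p=\sqrt{\det x}$ general and divide by $s$ to obtain an exact square root; these are cosmetic differences only.
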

  
  \begin{proof}
  Given a point $x\in\X_\Lambda$ we can always choose a representative $x'\in\Mat(2,\bbC_\Lambda)$ with
  \begin{align*}
  (x')^\circ=x',\qquad &\det(x')=1,\qquad \tr(x')\geq 0.
  \end{align*}
  Then the matrix 
  $A'=\mathbb{1}+x'\in\Mat(2,\bbC_\Lambda)$
  satisfies
  \begin{align*}
  \det(A')=2+\tr(x')>0,\qquad (A')^2=\det(A')x',\qquad (A')^\circ=A',
  \end{align*}
  and thus define an element in $\PGL^+(2,\bbC_\Lambda)$ with the desired properties. The proof for $y\in\Y_\Lambda$ is analogous.
  \end{proof}
  
  The stabilizers of $\mathbb{1}$ in $\X_\Lambda$ and in $\Y_\Lambda$ are given by the projective unitary matrices with respect to $\circ$ and $\dag$
  \begin{align*}
  &\Stab(\mathbb{1},\X_\Lambda)=\Big\{U\in\PGL^+(2,\bbC_\Lambda)\mid U^\circ=U^\inv\Big\}, \\
  &\Stab(\mathbb{1},\Y_\Lambda)=\Big\{V\in \PGL^+(2,\bbC_\Lambda) \mid V^\dag=V^\inv\Big\}.\nonumber
  \end{align*}
  We denote by $\PSL(2,\R)_\Lambda$ and $\PSU(2)_\Lambda$ the identity components of these groups. They are isomorphic to the groups
  \begin{align*}
  \PSL(2,\R)_\Lambda\cong\begin{cases}
  \Delta \PSL(2,\R),
  \cr
  \PSL(2,\R),
  \cr
  \PSL(2,\R)\ltimes\{0\},
  \end{cases}
  \quad\quad
  \PSU(2)_\Lambda\cong\begin{cases}
  \overline{\Delta}\PSL(2,\R),
  \cr
  \PSU(2),
  \cr
  \Un(1)\ltimes\R^2,
  \end{cases}
  \end{align*}
  for $\Lambda=-1,1,0$, respectively.
  Here, $\Delta\PSL(2,\R),\overline{\Delta}\PSL(2,\R)\subset\PGL^{2,+}(2,\R)$ stand for the images of the diagonal and the anti-diagonal embeddings of $\PSL(2,\R)$ given by 
  $\Delta: U\mapsto (U,U)$ and $\overline \Delta: V\mapsto (V,(V^\inv)^{T})$.

  \subsection{Tangent vectors}\label{sec:tangent}
  The tangent spaces $T_x\X_\Lambda$ and $T_y\Y_\Lambda$ can also be given a simple matrix description \cite{DaPhD,MS}. 
  With Lemma \ref{lem:unitpt},  points in $\X_\Lambda$ and $\Y_\Lambda$ can be parametrized as
   $x=A\rhd \mathbb{1}$ and  $y=B\rhd \mathbb{1}$, with $A^\circ=A$ and $B^\dag=B$. 
   The tangent spaces $T_x\X_\Lambda$ and $T_y \Y_\Lambda$ can then be parametrized by
  \begin{align}\label{eq:tangent}
  &T_x\X_\Lambda=A\rhd\x_\Lambda,\qquad\qquad\x_\Lambda=\Big\{X\in\Mat(2,\bbC_\Lambda)\mid X^\circ=X,\quad \tr(X)=0\Big\},\\
  &T_y\Y_\Lambda=B\rhd\y_\Lambda,\qquad\qquad\y_\Lambda=\Big\{Y\in\Mat(2,\bbC_\Lambda)\mid Y^\dag=Y,\quad \tr(Y)=0\Big\}.\nonumber
  \end{align}
  
  The induced actions of 
  $\Stab(\mathbb 1,\X_\Lambda)$
  on $\x_\Lambda$ and of $\Stab(\mathbb 1,\Y_\Lambda)$ on $\y_\Lambda$ are given by
  \begin{align*}
  &\rhd: \Stab(\mathbb 1,\X_\Lambda)\times \x_\Lambda\to \x_\Lambda, \qquad\qquad U\rhd X=U X U^\inv,
  \\
  &\rhd: \Stab(\mathbb 1, \Y_\Lambda)\times \y_\Lambda\to \y_\Lambda, \qquad\qquad V\rhd Y=V Y V^\inv.\nonumber
  \end{align*}
  
  Note that $\x_\Lambda$ and $\y_\Lambda$ are endowed with invariant bilinear forms
  \begin{align}\label{eq:detmetric}
  \langle X,X\rangle_{\x_\Lambda}=-\det(\Im X),
  \qquad\qquad
  \langle Y,Y\rangle_{\y_\Lambda}=-\det(Y).
  \end{align}
  These are unique up to real rescaling and are transported to the tangent spaces at $x=A\rhd\mathbb{1}\in\X_\Lambda$ and at $y=B\rhd\mathbb{1}\in\Y_\Lambda$ via the  $\PGL^+(2,\bbC_\Lambda)$-action. More precisely, for  $X\in \x_\Lambda$,  $Y\in \y_\Lambda$   and $A,B\in \PGL^+(2,\bbC_\Lambda)$ the metrics on the tangent spaces at  $A\rhd\mathbb 1$ and $B\rhd\mathbb 1$ are defined by
  \begin{align}\label{eq:metric}
  \langle A\rhd X, A\rhd X\rangle=\langle X,X\rangle_{\x_\Lambda},
  \qquad\qquad
  \langle B\rhd Y, B\rhd Y\rangle=\langle Y,Y\rangle_{\y_\Lambda}.
  \end{align}
  Note also that $\x_\Lambda=\ell\,\mathfrak{sl}(2,\R)=\ell\,\mathrm{Lie}\PSL(2,\R)$ and that the bilinear form $\langle\cdot,\cdot \rangle_{\x_\Lambda}$ is proportional to the Killing form on $\mathfrak{sl}(2,\R)$. This shows that the tangent space $T_x\X_\Lambda$ with the metric from  \eqref{eq:detmetric} and \eqref{eq:metric}   is isometric to 3d Minkowski space for all values of $\Lambda$. We therefore call a matrix $X\in \x_\Lambda$ timelike, lightlike or spacelike, if $\langle X,X\rangle<0$, $\langle X,X\rangle=0$ or $\langle X,X\rangle>0$, respectively. This is equivalent to the statement that the matrix $\exp(\Im X)\in\PSL(2,\R)$  is elliptic, parabolic or hyperbolic, respectively.
  
  To simplify notation later, we define  $\sigma:\x_\Lambda\to\{-1,0,1\}$ with 
  \begin{align*}
  \sigma(X)=\begin{cases}-1, & \text{if $X$ is timelike,} \cr 0, & \text{if $X$ is lightlike,} \cr 1, & \text{if $X$ is spacelike}.\end{cases}
  \end{align*}
  For each $X\in\x_\Lambda$, we denote by $\hat X\in\x_\Lambda$ its normalization, given by
  \begin{align*}
  \hat X=\begin{cases}\frac{X}{\sqrt{|\langle X,X\rangle|}}, & \text{if $X$ is timelike or spacelike,} \cr X, & \text{if $X$ is lightlike.}\end{cases}
  \end{align*}
  The bilinear form $\langle\cdot,\cdot\rangle_{\y_\Lambda}$ on  $\y_\Lambda$ has different signatures for different values of $\Lambda$. It is Lorentzian for  $\Lambda=-1$, Riemannian for $\Lambda=1$, and degenerate with signature $(0,1,2)$  for $\Lambda=0$. We define timelike, lightlike and spacelike matrices and normalization for matrices in $\y_\Lambda$ analogously.
  Note that timelike vectors in $\y_\Lambda$ arise only for $\Lambda=-1$ and lightlike ones only for $\Lambda=0, -1$.
  
  These conventions allow one to refine Lemma \ref{lem:unitpt} and to parametrize points $x\in\X_\Lambda$ and $y\in \Y_\Lambda$ in terms of exponentials of unit tangent vectors.

  \begin{lemma}\label{lem:expara} Any point $x\in\X_\Lambda$ or $y\in\Y_\Lambda$ can be expressed as
  \begin{align*}
  &x=\exp(\tfrac \theta 2 X)\rhd \mathbb 1=\Big(c_{\Lambda\sigma(X)}\big(\tfrac \theta 2\big)+ s_{\Lambda \sigma(X)}\big(\tfrac \theta 2\big) X\Big)\rhd \mathbb 1,\\
  &y=\exp(\tfrac \theta 2 Y)\rhd \mathbb 1=\Big(c_{\sigma(Y)}\big(\tfrac \theta 2\big)+ s_{ \sigma(Y)}\big(\tfrac \theta 2\big) Y\Big)\rhd \mathbb 1,
  \end{align*}
  with unit vectors $X\in\x_\Lambda$, $Y\in\y_\Lambda$, $\theta\geq 0$ and with  $\theta<2\pi$ for $\Lambda\sigma(X)<0$ or $\sigma(Y)<0$. This parametrization is unique for $x, y \neq \mathbb 1$.
  \end{lemma}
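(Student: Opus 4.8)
The statement sharpens Lemma~\ref{lem:unitpt} by pinning down the one‑parameter subgroup element carrying $\mathbb{1}$ to $x$ (resp.\ $y$). The plan is to take the normalized representative of $x$ from Lemma~\ref{lem:unitpt}, split it into a scalar part and a traceless part lying in $\x_\Lambda$, normalize the latter to a unit vector, and turn the determinant‑one condition into the generalized Pythagorean identity of \eqref{eq:trigids}, which then fixes $\theta$.

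\textbf{Existence.} Given $x\in\X_\Lambda$, choose (as in the proof of Lemma~\ref{lem:unitpt}) a representative $x'\in\Mat(2,\bbC_\Lambda)$ with $(x')^\circ=x'$, $\det x'=1$, $\tr x'\ge 0$, and write $x'=c\,\mathbb{1}+W$ with $c=\tfrac12\tr x'\ge 0$ and $\tr W=0$; since $\mathbb{1}^\circ=\mathbb{1}$ we get $W^\circ=W$, so $W\in\x_\Lambda$. Two elementary $2\times2$ facts carry the proof: the characteristic polynomial gives $\det x'=c^2+\det W$ and $W^2=-\det(W)\,\mathbb{1}$, while writing $W=\ell M$ with $M\in\mathfrak{sl}(2,\R)$ (using $\x_\Lambda=\ell\,\mathfrak{sl}(2,\R)$) together with \eqref{eq:detmetric} gives $\det W=\ell^2\det M=\Lambda\,\langle W,W\rangle_{\x_\Lambda}$, hence $W^2=-\Lambda\,\langle W,W\rangle_{\x_\Lambda}\,\mathbb{1}$. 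Thus $\det x'=1$ reads
\begin{align*}
c^2+\Lambda\,\langle W,W\rangle_{\x_\Lambda}=1 .
\end{align*}
If $W=0$ then $c=1$, $x=\mathbb{1}$, and $\theta=0$ with any unit $X$ works. Otherwise put $\sigma=\sigma(W)$ and $X=\hat W$ (a unit vector with $\sigma(X)=\sigma$), so $W=s\,X$ with $s=\sqrt{|\langle W,W\rangle_{\x_\Lambda}|}>0$ (with $s=1$, $X=W$ in the lightlike case). The displayed identity becomes $c^2+\Lambda\sigma\,s^2=1$, which is exactly the first identity of \eqref{eq:trigids} for the index $\Lambda\sigma$; since $c,s\ge 0$, inverting $c_{\Lambda\sigma},s_{\Lambda\sigma}$ yields a unique $\theta\ge 0$ with $c_{\Lambda\sigma}(\theta)=c$ and $s_{\Lambda\sigma}(\theta)=s$ (this $\theta$ is bounded in the elliptic case), so that $c\,\mathbb{1}+s\,X=c_{\Lambda\sigma}(\theta)\,\mathbb{1}+s_{\Lambda\sigma}(\theta)\,X=\exp(\theta X)$ by expanding the exponential series using $X^2=-\Lambda\sigma\,\mathbb{1}$. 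Finally, $X^\circ=X$ forces $\exp(\tfrac\theta2 X)^\circ=\exp(\tfrac\theta2 X)$, hence $\exp(\tfrac\theta2 X)\rhd\mathbb{1}=\exp(\tfrac\theta2 X)^2=\exp(\theta X)$, so $x=[x']=\exp(\tfrac\theta2 X)\rhd\mathbb{1}$; the double‑angle identities of \eqref{eq:trigids} rewrite this as $\big(c_{\Lambda\sigma}(\tfrac\theta2)\,\mathbb{1}+s_{\Lambda\sigma}(\tfrac\theta2)\,X\big)\rhd\mathbb{1}$, which is the asserted formula. The case $y\in\Y_\Lambda$ is parallel, with $\circ$ replaced by $\dag$ and $\x_\Lambda$ by $\y_\Lambda$.

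\textbf{Uniqueness (for $x\ne\mathbb{1}$).} Running the argument backwards, any presentation $x=\exp(\tfrac\theta2 X)\rhd\mathbb{1}$ with $X$ a unit vector and $\theta$ admissible gives the matrix $\exp(\theta X)=c_{\Lambda\sigma(X)}(\theta)\,\mathbb{1}+s_{\Lambda\sigma(X)}(\theta)\,X$ representing $x$; rescaling it to determinant one and nonnegative trace recovers the (unique) representative $x'$ of Lemma~\ref{lem:unitpt}, whose scalar/traceless splitting then determines $\sigma(X)$, the unit vector $X=\hat W$, and $\theta$. Here $x\ne\mathbb{1}$ forces $W\ne 0$, so $X$ is determined, and the bound imposed on $\theta$ in the compact case removes the remaining periodicity ambiguity.

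\textbf{Main obstacle.} The only real content is treating the three causal types of $X$ (and of $Y$) uniformly; the identity $X^2=-\Lambda\sigma(X)\,\mathbb{1}$ for a unit vector $X\in\x_\Lambda$ — and its analogue for $Y\in\y_\Lambda$ — is the device that collapses $\exp(\tfrac\theta2 X)$ to a two‑term matrix and converts $\det=1$ into the generalized Pythagorean identity, after which everything reduces to inverting $c_{\Lambda\sigma},s_{\Lambda\sigma}$ and bookkeeping. I expect the delicate points to be (i) the compact/elliptic direction, where the admissible range of $\theta$ must be matched to the projective period of the corresponding geodesic and the sign of $c_{\Lambda\sigma}(\theta)$ reconciled with the normalization $\tr x'\ge 0$, and (ii) the lightlike direction, where the convention $\hat X=X$ has to be invoked to fix the scale of $X$, hence of $\theta$.
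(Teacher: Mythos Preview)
Your proof is correct and follows essentially the same route as the paper: both arguments normalize a representative to determinant one and nonnegative trace, split it as a scalar plus a traceless part in $\x_\Lambda$ (resp.\ $\y_\Lambda$), read $\det=1$ as the generalized Pythagorean identity $c^2+\Lambda\sigma\,s^2=1$, and invert the generalized trig functions to recover $\theta$ and identify the resulting matrix with a matrix exponential. The only cosmetic difference is that the paper applies this decomposition to the group element $A$ of Lemma~\ref{lem:unitpt} (so that $A=\exp(\tfrac\theta2 X)$ directly), whereas you decompose the point representative $x'$ itself (obtaining $x'=\exp(\theta X)$) and then pass to $\exp(\tfrac\theta2 X)\rhd\mathbb 1$ via $X^\circ=X$; the two are equivalent since $A^2=x'$.
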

  
  \begin{proof}
  By Lemma \ref{lem:unitpt} there are matrices $A,B\in \PGL^+(2,\bbC_\Lambda)$ with $A^\circ=A$, $B^\dag=B$ such that $x=A\rhd\mathbb 1$ and $y=B\rhd\mathbb 1$. By rescaling $A$ and $B$ we can achieve $\det(A)=\det(B)=1$ and $\tr(A), \tr(B)\geq 0$. Using the parametrizations \eqref{eq:xmatrixrp}, \eqref{eq:ymatrixrp} and \eqref{eq:tangent}, we can express them as
  $$
  A=a\mathbb 1+b X,\qquad\qquad B=c\mathbb 1+d Y,
  $$ 
  with $a,b,c,d\geq 0$  and unit matrices $X\in\x_\Lambda$ and $Y\in\y_\Lambda$. The condition $\det(A)=\det(B)=1$ then read $a^2+\Lambda\sigma(X) b^2=1$ and $c^2+\sigma(Y)d^2=1$. We can thus parametrize 
  $$a=c_{\Lambda\sigma(X)}(\tfrac \theta 2), \qquad b=s_{\Lambda\sigma(X)}(\tfrac \theta 2), \qquad c=c_{\sigma(Y)}(\tfrac \theta 2),\qquad d=s_{\sigma(Y)}(\tfrac \theta 2),$$ 
  with $\theta\geq 0$ and $\theta<2\pi$ for $\Lambda\sigma(X)<0$ or $\sigma(Y)<0$. A direct matrix computation using the definition of $x_\Lambda$ and $y_\Lambda$ in \eqref{eq:tangent} then shows that these expressions for $A,B$ coincide with $\exp(\tfrac \theta 2 X)$ and $\exp(\tfrac \theta 2 Y)$.
  \end{proof}

  \begin{proposition}\label{prop:stabilisers}
  The subgroups of 
  $\Stab(\mathbb 1,\X_\Lambda)$ and $\Stab(\mathbb 1,\Y_\Lambda)$ that stabilize a spacelike or timelike vector $X\in\x_\Lambda$ or $Y\in\y_\Lambda$ are 
  \begin{align*}
  \Stab(X)&=\Rquotient{\Big\{a\mathbb{1}+b\Im \hat X \mid a,b\in\R,\, a^2\neq \sigma(X)b^2 \Big\}}{\R^\times},
  \cr
  \Stab(Y)&=\Rquotient{\Big\{a\mathbb{1}+\ell b \hat Y \mid a,b\in\R,\, a^2\neq -\Lambda\sigma(Y) b^2 \Big\}}{\R^\times}.
  \end{align*}
  \end{proposition}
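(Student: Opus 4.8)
The plan is to reduce the statement to a question about commutants. A short computation shows that the tangent action of $\Stab(\mathbb 1,\X_\Lambda)$ on $\x_\Lambda$ is by conjugation: differentiating $A\rhd x=AxA^\circ$ of \eqref{eq:actiony} at $x=\mathbb 1$ and using $AA^\circ\in\R^\times\mathbb 1$ gives $A\rhd X=AXA^\inv$, and likewise $B\rhd Y=BYB^\inv$ on $\y_\Lambda$. Hence $A\in\Stab(X)$ iff $A\in\Stab(\mathbb 1,\X_\Lambda)$ and $AX=XA$, and similarly for $Y$. So the task is to describe, among the matrices commuting with $X$ (resp.\ $Y$), those satisfying $A^\circ=A^\inv$ (resp.\ $B^\dag=B^\inv$) in $\PGL^+(2,\bbC_\Lambda)$. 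I would carry out the $\X_\Lambda$-case in detail; the $\Y_\Lambda$-case is parallel, with $\circ$ replaced by $\dag$ and the real matrix $\Im X$ replaced by $Y$ itself.

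Next I would identify the commutant. As $\x_\Lambda=\ell\,\mathfrak{sl}(2,\R)$, write $X=\ell M$ with $M=\Im X\in\mathfrak{sl}(2,\R)$. Since $X$ is timelike or spacelike, $\langle X,X\rangle_{\x_\Lambda}=-\det M\neq 0$, so $M$ is invertible, hence non-scalar, and is a positive real multiple of $\Im\hat X$; Cayley--Hamilton gives $(\Im\hat X)^2=-\det(\Im\hat X)\,\mathbb 1=\sigma(X)\,\mathbb 1$. Writing a candidate as $A=P+\ell Q$ with real $P,Q$, the relation $AX=XA$ becomes $\ell[P,M]=\Lambda[Q,M]$, i.e.\ $[P,M]=0$ and $\Lambda[Q,M]=0$. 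For $\Lambda\neq 0$ both $P,Q$ lie in the centralizer $\R\mathbb 1\oplus\R M$ of the invertible traceless matrix $M$, so $A=\alpha\mathbb 1+\beta M$ with $\alpha,\beta\in\bbC_\Lambda$. For $\Lambda=0$ I would instead use that $\Stab(\mathbb 1,\X_0)=\PGL(2,\R)$ (under $\PGL^+(2,\bbC_0)\cong\PGL(2,\R)\ltimes\mathfrak{sl}(2,\R)$ it is exactly the $\PGL(2,\R)$-factor, i.e.\ the classes of real matrices), so $A$ may be taken real, and then $AX=XA$ forces $A=a\mathbb 1+bM$ with $a,b\in\R$. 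For $\Y_\Lambda$ one analogously gets $B=c_1\mathbb 1+c_2 Y$ with $c_i\in\bbC_\Lambda$, using $Y^\dag=Y$ and $\det Y\neq 0$ to identify the centralizer of $Y$ in $\Mat(2,\bbC_\Lambda)$ with $\bbC_\Lambda\mathbb 1\oplus\bbC_\Lambda Y$.

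Then I would impose the unitarity condition. From $\mathrm{adj}(\alpha\mathbb 1+\beta N)=\alpha\mathbb 1-\beta N$ for traceless $N$ and the reality of $M$ one gets $A^\circ=\bar\alpha\mathbb 1-\bar\beta M$, while $A(\alpha\mathbb 1-\beta M)=\det(A)\,\mathbb 1$ shows $A^\inv$ is a $\bbC_\Lambda^\times$-multiple of $\alpha\mathbb 1-\beta M$. As $\mathbb 1$ and $M$ are $\bbC_\Lambda$-linearly independent, $A^\circ=A^\inv$ in $\PGL^+$ means $(\bar\alpha,\bar\beta)=\mu(\alpha,\beta)$ for some unit $\mu$; when $\alpha$ (or, symmetrically, $\beta$) is a unit this forces $\bar\alpha\beta\in\R$, hence $\alpha^\inv\beta=\bar\alpha\beta/|\alpha|^2\in\R$, and rescaling by $\alpha^\inv$ exhibits $A\sim\mathbb 1+(\alpha^\inv\beta)M$, which --- after absorbing the positive scalar between $M$ and $\Im\hat X$ --- is of the claimed shape $a\mathbb 1+b\,\Im\hat X$ with $a,b\in\R$. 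Conversely, every real matrix $a\mathbb 1+b\,\Im\hat X$ commutes with $X=\ell M$ (since $\ell\mathbb 1$ is central and $M$ is a multiple of $\Im\hat X$) and lies in $\Stab(\mathbb 1,\X_\Lambda)$ because a real matrix $R$ satisfies $R^\circ R=\det(R)\,\mathbb 1$; so it represents an element of $\Stab(X)$. The $\Y_\Lambda$-computation runs the same way up to signs: $B^\dag=\bar c_1\mathbb 1+\bar c_2 Y$, $B^\inv\propto c_1\mathbb 1-c_2 Y$, so $B^\dag=B^\inv$ forces $\bar c_1 c_2\in\ell\R$ and hence $B\sim\mathbb 1+\ell b\,\hat Y$ with $b\in\R$.

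It remains to tidy up two points, which I expect to be the delicate part: the zero-divisor bookkeeping and the parameter domains. The case where neither of $\alpha,\beta$ (resp.\ $c_1,c_2$) is a unit can occur only for $\Lambda=-1$; one checks it is incompatible with having both $|\det A|^2>0$ and $A^\circ=A^\inv$ --- proportional zero divisors give $|\det A|^2=0$, whereas ``independent'' zero divisors admit no unit $\mu$ with $(\bar\alpha,\bar\beta)=\mu(\alpha,\beta)$ --- so it does not arise. For the domains, $(\Im\hat X)^2=\sigma(X)\mathbb 1$ yields $\det(a\mathbb 1+b\,\Im\hat X)=a^2-\sigma(X)b^2$ and $\hat Y^2=\sigma(Y)\mathbb 1$ yields $\det(a\mathbb 1+\ell b\,\hat Y)=a^2-\Lambda b^2\det\hat Y=a^2+\Lambda\sigma(Y)b^2$, so these represent elements of $\PGL^+(2,\bbC_\Lambda)$ precisely when $a^2\neq\sigma(X)b^2$, resp.\ $a^2\neq-\Lambda\sigma(Y)b^2$; and since $\bbC_\Lambda^\times$-proportional representatives of the stated shapes are automatically $\R^\times$-proportional, the quotients by $\R^\times$ embed faithfully in $\PGL^+(2,\bbC_\Lambda)$. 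The genuinely $\Lambda$-dependent subtlety is at $\Lambda=0$: there the matrix-ring centralizer of $X$ is strictly larger than $\bbC_0\mathbb 1\oplus\bbC_0 X$, so it is really $A^\circ=A^\inv$ --- equivalently the identification $\Stab(\mathbb 1,\X_0)\cong\PGL(2,\R)$ --- that cuts the commutant down to the stated family.
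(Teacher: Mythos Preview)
Your proposal is correct and reaches the same intersection $\Stab(\mathbb 1)\cap\{U:UXU^{-1}=X\}$ as the paper, but in the opposite order. The paper first asserts that every $U\in\Stab(\mathbb 1,\X_\Lambda)$ has the form $a\mathbb 1+b\,\Im\hat X_U$ with \emph{real} $a,b$ and some $X_U\in\x_\Lambda$---i.e.\ that $\Stab(\mathbb 1,\X_\Lambda)$ is projectively $\PGL(2,\R)$, uniformly in $\Lambda$---and then observes that $UXU^{-1}=X$ forces $X_U\propto X$. You instead first compute the $\bbC_\Lambda$-centralizer of $X$ and afterwards impose $UU^\circ\in\R^\times\mathbb 1$ to cut the coefficients down to reals. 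The paper's order is shorter precisely because the reality of $\Stab(\mathbb 1,\X_\Lambda)$ handles all $\Lambda$ at once, whereas your order forces the separate treatment of $\Lambda=0$ (where the matrix-ring centralizer of $X$ is genuinely larger) and the zero-divisor bookkeeping for $\Lambda=-1$ that you correctly flag. On the other hand, your route is more self-contained: the paper's key first step---the parametrization of $\Stab(\mathbb 1,\X_\Lambda)$ by real matrices---is stated without justification there, while you derive the analogous reduction explicitly. One minor simplification of your step~3: expanding $AA^\circ=(|\alpha|^2+|\beta|^2\det M)\,\mathbb 1+(\bar\alpha\beta-\alpha\bar\beta)M$ and using that $M$ is real, traceless and nonzero shows $\bar\alpha\beta\in\R$ directly, without first assuming $\alpha$ or $\beta$ is a unit; the unit hypothesis is then only needed for the rescaling.
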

  \begin{proof}

  The conditions $|\det(U)|^2>0$ and $U\rhd \mathbb 1=UU^\circ=\mathbb 1$ for an element $U\in\PSL(2,\bbC_\Lambda)$
  imply
  \begin{align*}
  U=a\mathbb{1}+b\Im\hat X_U,
  \end{align*}
  for some $a,b\in\R$ and $X_U\in\x_\Lambda$ with $a^2-\sigma(\hat X_U)b^2\neq 0$. Furthermore, the condition
  $UXU^\inv=X$ for
  a spacelike or timelike vector $X\in\x_\Lambda$ implies $X_U=X$, up to rescaling. The proof for $\Y_\Lambda$ is analogous.
  
  \end{proof}

  \subsection{Geodesics and geodesic planes}\label{sec:planepar} The description of the spaces $\X_\Lambda$ and $\Y_\Lambda$ in terms of generalized complex matrices allows one to parametrize their geodesics in terms of the matrix exponential. As the isometry group $\PGL^+(2,\bbC_\Lambda)$ acts transitively on these spaces, all geodesics are obtained from geodesics through $\mathbb 1$  via the action of the isometry group.  Geodesics through $\mathbb 1$ are obtained by exponentiating matrices in $\x_\Lambda$ and $\y_\Lambda$.

  \begin{proposition} \label{prop:geodesics} Let  $x\in\X_\Lambda$, $y\in\Y_\Lambda$ and $A,B\in \PGL^+(2,\bbC_\Lambda)$ be as in Lemma \ref{lem:unitpt}. Then for any unit tangent vector $A\rhd X\in T_{x}\X_\Lambda$ at $x=A\rhd\mathbb 1$ the geodesic $x:\R\to\X_\Lambda$ with  $x(0)=x$ and $\dot x(0)=A\rhd X$ is given by
  \begin{align}\label{eq:xpar}
  &x(t)=A\rhd\exp(t X)=A\rhd\Big(c_{\Lambda\sigma(X)}(t)\mathbb{1}+s_{\Lambda\sigma(X)}(t) X\Big),
  \end{align}
  and for any unit tangent vector $B\rhd Y\in T_y\Y_\Lambda$ at $y=B\rhd\mathbb{1}$ the geodesic $y:\R\to\Y_\Lambda$ with $y(0)=y$ and $\dot y(0)=B\rhd Y$ is given by
  \begin{align}
  \label{eq:ypar}
  &y(t)=B\rhd\exp(t Y)=B\rhd \Big(c_{-\sigma(Y)}(t)\mathbb{1}+s_{-\sigma(Y)}(t) Y\Big).
  \end{align}
  \end{proposition}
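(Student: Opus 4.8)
The plan is to reduce everything to a computation at the basepoint $\mathbb 1$ and then transport it by the isometry group, exactly as in the discussion preceding the statement. First I would observe that, since $\PGL^+(2,\bbC_\Lambda)$ acts by isometries and acts transitively (Lemma \ref{lem:unitpt}), it suffices to prove the two formulas in the special case $x=\mathbb 1$, $A=\mathbb 1$ and $y=\mathbb 1$, $B=\mathbb 1$: applying $A\rhd(-)$ (resp.\ $B\rhd(-)$) to the geodesic through $\mathbb 1$ with initial velocity $X$ (resp.\ $Y$) yields a geodesic through $x$ (resp.\ $y$) with the prescribed initial velocity, by \eqref{eq:metric} and the fact that the $\rhd$-action is smooth and linear in the matrix entries, hence differentiable term by term.

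So the core is to show that $t\mapsto \exp(tX)\rhd\mathbb 1 = \exp(tX)\exp(tX)^\circ$ is a geodesic of $\X_\Lambda$ with $x(0)=\mathbb 1$, $\dot x(0)=X$, and similarly for $\Y_\Lambda$ with $\exp(tY)\exp(tY)^\dag$. Here I would use the standard homogeneous-space argument: a curve of the form $\exp(tX)\rhd\mathbb 1$, where $X$ lies in the $(-1)$-eigenspace of the Cartan-type involution defining $\Stab(\mathbb 1)$ (here $\x_\Lambda$, resp.\ $\y_\Lambda$, which by \eqref{eq:tangent} are precisely the trace-zero matrices fixed by $\circ$, resp.\ $\dag$), is the orbit of a one-parameter group and is automatically a geodesic for the invariant metric, because the reflection isometry fixing $\mathbb 1$ reverses it — i.e.\ $\X_\Lambda$ and $\Y_\Lambda$ are symmetric spaces and $\x_\Lambda$, $\y_\Lambda$ play the role of the Cartan complement. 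Concretely, one checks that complex conjugation composed with the inversion $x\mapsto x^\inv$ (which by \eqref{eq:xmatpar} and $x^\circ=x$, $\det x>0$ is a well-defined isometry fixing $\mathbb 1$) sends $\exp(tX)\rhd\mathbb 1$ to $\exp(-tX)\rhd\mathbb 1$, forcing the curve to be a geodesic; then $\dot x(0)=X$ follows by differentiating $\exp(tX)\exp(-tX)^{-1}\cdots$ at $t=0$, or more simply from $\tfrac{d}{dt}\big|_0\exp(tX)\exp(tX)^\circ = X+X^\circ = 2X$ (up to the normalization convention for the identification of $T_{\mathbb 1}\X_\Lambda$ with $\x_\Lambda$), and that $X$ is a unit vector so the parametrization is by arclength.

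The second equality in \eqref{eq:xpar} and \eqref{eq:ypar} is then a purely algebraic identity in $\Mat(2,\bbC_\Lambda)$: from $X\in\x_\Lambda$ we have $\tr X=0$, so Cayley–Hamilton gives $X^2=-\det(X)\,\mathbb 1$, and $\det X = -\langle X,X\rangle_{\x_\Lambda}$ up to the sign bookkeeping carried by $\sigma$ and the $\Lambda$-dependence of the form; for a unit vector this means $X^2=-\Lambda\sigma(X)\,\mathbb 1$ in the $\x_\Lambda$ case and $X^2=-\sigma(Y)\,\mathbb 1$ in the $\y_\Lambda$ case (using that $\langle Y,Y\rangle_{\y_\Lambda}=-\det Y$ directly, without the extra $\Lambda$). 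Plugging this into the power series for $\exp(tX)$ and comparing with \eqref{eq:expellsincos} and the definition of $c_\mu, s_\mu$ for $\mu=\Lambda\sigma(X)$ (resp.\ $\mu=-\sigma(Y)$) yields $\exp(tX)=c_\mu(t)\mathbb 1+s_\mu(t)X$, exactly the stated form. Note the sign flip between $c_{\Lambda\sigma(X)}$ for $\X_\Lambda$ and $c_{-\sigma(Y)}$ for $\Y_\Lambda$ is accounted for precisely by the difference in \eqref{eq:detmetric}, where the $\X_\Lambda$-form carries a factor essentially equal to $\Lambda$ coming from $\Im$, whereas the $\Y_\Lambda$-form does not.

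I expect the main obstacle to be the careful bookkeeping in the symmetric-space / geodesic argument: verifying that the relevant reflection at $\mathbb 1$ is genuinely an isometry of $\X_\Lambda$ and $\Y_\Lambda$ in the matrix model, that it indeed reverses the one-parameter orbits, and that the normalization conventions (the factor relating $\dot x(0)$ to $X$, the identification of $T_{\mathbb 1}\X_\Lambda$ with $\x_\Lambda$, and the precise constant in \eqref{eq:detmetric}) are consistent so that a unit $X$ gives an arclength-parametrized geodesic and not one rescaled by a constant. The algebraic identity $\exp(tX)=c_\mu(t)\mathbb 1+s_\mu(t)X$ and the transport step are routine once this is pinned down.
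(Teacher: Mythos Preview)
Your approach is correct in outline but takes a genuinely different route from the paper. The paper's proof is more elementary and tailored to the projective model: since $\X_\Lambda$ and $\Y_\Lambda$ are defined as projective quadrics, their geodesics are by construction the intersections with projective lines, i.e.\ images of $2$-planes in $\R^4$. The paper simply observes that any such plane through $\mathbb 1$ is $\mathrm{Span}(\mathbb 1, X)$ for some $X\in\x_\Lambda$ (resp.\ $Y\in\y_\Lambda$), and that a constant-speed parametrization with the right initial data is then unique; it verifies that $t\mapsto c_\mu(t)\mathbb 1+s_\mu(t)X$ has $x(0)=\mathbb 1$, $\dot x(0)=X$, and $\langle\dot x(t),\dot x(t)\rangle=\mathrm{const}$ via the identity $\dot x(t)=\exp(\tfrac t2 X)\rhd X$. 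No symmetric-space machinery is needed.

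Your symmetric-space route is legitimate, and the Cayley--Hamilton computation of $\exp(tX)=c_\mu(t)\mathbb 1+s_\mu(t)X$ is the same in both approaches. Two caveats, however. First, the step where a single reflection at $\mathbb 1$ ``forces'' the one-parameter orbit to be a geodesic is not right as written: a curve invariant under one point-reflection need not be geodesic (think of $t\mapsto(t,t^3)$ in the plane). What you actually need is the standard symmetric-space fact that $\exp(tX)\cdot o$ is geodesic for $X$ in the Cartan complement, which requires either transporting the reflection to every point of the orbit or computing the invariant connection; you flag this yourself as the main obstacle, but it is real work, not a one-liner. Second, there is a factor-of-$2$ slip: since $X^\circ=X$ one has $\exp(tX)\rhd\mathbb 1=\exp(2tX)$, whereas the proposition's curve is $t\mapsto\exp(tX)$; your normalization caveats absorb this, but it is worth tracking. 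The paper's projective-line argument avoids both issues and is shorter here; your approach has the virtue of generalizing immediately to any symmetric space once the Cartan decomposition is in hand.
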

  \begin{proof} As the expressions for $x=A\rhd \mathbb 1$ and $y=B\rhd \mathbb 1$ are obtained from the ones for $x=\mathbb 1$ and $y=\mathbb 1$ via the action of the isometry group, it is sufficient to consider the cases $A=B=\mathbb 1$. 
    
  Geodesics in $\X_\Lambda$ or $\Y_\Lambda$ are obtained by projecting planes in $\R^4$.  The identifications \eqref{eq:xmatrixrp} and \eqref{eq:ymatrixrp} of $\R^4$  with the sets of hermitian matrices for $\circ$ and $\dag$  then shows that their image is contained in $\mathrm{Span}(\{\mathbb 1, X\})$ or $\mathrm{Span}(\{\mathbb 1, Y\})$ for a vector $X\in\x_\Lambda$ or $Y\in\y_\Lambda$. They are characterized uniquely by the conditions $x(0)=\mathbb 1$, $\dot x(0)=X$,   $\langle \dot x(t),\dot x(t)\rangle$ constant or $y(0)=\mathbb 1$ $\dot y(0)=Y$ and $\langle \dot y(t),\dot y(t)\rangle$ constant. The first two conditions follow directly from \eqref{eq:xpar} and \eqref{eq:ypar}, the last conditions from the identities
  \begin{align*}
  \dot x(t)=(A\exp(\tfrac{t}{2} X))\rhd X,\qquad\qquad \dot y(t)=(B\exp(\tfrac{t}{2}Y))\rhd Y,
  \end{align*}
  which are obtained using \eqref{eq:trigids} and \eqref{eq:trigdev}.
  \end{proof}

  Note that a geodesic $x:\R\to\X_\Lambda$ or $y:\R\to \Y_\Lambda$ is timelike, lightlike or spacelike, respectively, if the vectors $X\in\x_\Lambda$ or $Y\in\y_\Lambda$ from Proposition \ref{prop:geodesics} are timelike, lightlike or spacelike.  
  Equation \eqref{eq:ypar} implies that a geodesic in $\Y_\Lambda$ is closed if and only if it is timelike, which is possible only for $\Lambda=-1$.  By equation \eqref{eq:xpar} a geodesic in $\X_\Lambda$ is closed if and only if it is spacelike and $\Lambda=1$ or timelike and $\Lambda=-1$.
  
  The parameter $t\in\R$ in \eqref{eq:xpar} and \eqref{eq:ypar} can be readily identified as the arc length parameter of a spacelike or timelike geodesic. By an abuse of notation, we write $d(x,x')$ and $d(y,y')$ for the arc length of a geodesic segment with endpoints $x,x'\in \X_\Lambda$ or $y,y'\in\Y_\Lambda$. This segment is of course non-unique whenever there is a closed geodesic containing $x,x'$ or $y,y'$. In this case, any identity stated for $d(x,x')$ and $d(y,y')$ is understood to hold for all such choices.

  \begin{proposition} \label{prop:geodesdist}
  Let $x,x'\in\X_\Lambda$ and $y,y'\in\Y_\Lambda$. Then the arc lengths $d(x,x')$, $d(y,y')$ satisfy
  \begin{align*}
  |c_{\Lambda\sigma}(d(x,x'))|=\tfrac{1}{2}|\tr(\bar x'\cdot \bar x^\inv)|,\qquad\qquad |c_\sigma(d(y,y'))|=\tfrac{1}{2}|\tr(\bar y'\cdot \bar y^\inv)|,
  \end{align*}
  where $\sigma=-1,0,1$, respectively, if the geodesic segment connecting $x,x'$ or $y,y'$ is timelike, lightlike or spacelike and $\bar x,\bar x', \bar y,\bar y'$ are matrices of unit determinant representing $x,x',y,y'$. 
  
  For $\Lambda=0$  one also has
  \begin{align*}
  \sigma d(x,x')^2=-\det \Im(\bar x'-\bar x),
  \end{align*}
  where $\bar x',\bar x$  are matrices with traces of equal sign representing $x',x$.
  \end{proposition}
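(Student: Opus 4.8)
The plan is to reduce the distance formula to the case $x = \mathbb{1}$ (or $y = \mathbb{1}$) using the transitivity of the isometry group, then read off the formula from the geodesic parametrization in Proposition~\ref{prop:geodesics}. Concretely, first I would use Lemma~\ref{lem:unitpt} to write $x = A \rhd \mathbb{1}$ with $A^\circ = A$, $\det(A) = 1$. Since $A$ is an isometry, $d(x,x') = d(\mathbb{1}, A^{\inv} \rhd x')$, so it suffices to prove the identity when one of the two points is $\mathbb{1}$. I would then verify that the quantity $|\tr(\bar{x}' \cdot \bar{x}^{\inv})|$ is invariant under replacing $(x, x')$ by $(A \rhd x, A \rhd x')$ for unit-determinant representatives: using $A \rhd x = A x A^\circ$ and the fact that $\det A = 1$, one has $\overline{A \rhd x'} \cdot (\overline{A \rhd x})^{\inv} = \overline{A}\,\bar{x}'\,\overline{A^\circ}\,(\overline{A^\circ})^{\inv}\,\bar{x}^{\inv}\,\overline{A}^{\inv} = \overline{A}\,(\bar{x}' \bar{x}^{\inv})\,\overline{A}^{\inv}$, a conjugation, whose trace is unchanged. (Here I am using that conjugation is a ring homomorphism and $\overline{M^{\inv}} = \overline{M}^{\inv}$.) So both sides of the claimed identity are isometry-invariant, and the reduction to $x = \mathbb{1}$ is legitimate.

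With $x = \mathbb{1}$, write $x' = \exp(\tfrac{\theta}{2} X) \rhd \mathbb{1}$ as in Lemma~\ref{lem:expara}, where $X \in \x_\Lambda$ is a unit vector, $\theta = d(\mathbb{1}, x')$, and $\sigma = \sigma(X)$. Then a unit-determinant representative of $x'$ is $\bar{x}' = \exp(\tfrac{\theta}{2}X) \exp(\tfrac{\theta}{2}X)^\circ$. Since $X^\circ = X$ and $\tr X = 0$, one computes $\exp(\tfrac{\theta}{2}X)^\circ = c_{\Lambda\sigma}(\tfrac{\theta}{2})\mathbb{1} - s_{\Lambda\sigma}(\tfrac{\theta}{2}) X^\circ$ — wait, more carefully: $\big(c\,\mathbb{1} + s X\big)^\circ = \bar c\,\mathbb{1} + \bar s X^\circ$ where $c = c_{\Lambda\sigma}(\tfrac\theta2)$, $s = s_{\Lambda\sigma}(\tfrac\theta2)$ are real, hence $\exp(\tfrac\theta2 X)^\circ = c\,\mathbb{1} + s X = \exp(\tfrac\theta2 X)$. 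Therefore $\bar{x}' = \exp(\theta X) = c_{\Lambda\sigma}(\theta)\mathbb{1} + s_{\Lambda\sigma}(\theta) X$ using the addition formulas \eqref{eq:trigids}. Taking the trace and using $\tr X = 0$, $\tr \mathbb{1} = 2$ gives $\tr(\bar{x}') = 2 c_{\Lambda\sigma}(\theta)$, so $\tfrac12 |\tr(\bar{x}' \cdot \mathbb{1}^{\inv})| = |c_{\Lambda\sigma}(d(\mathbb{1},x'))|$, which is the desired identity. The analogous computation for $\Y_\Lambda$ uses the $\dag$-involution and $c_{\sigma(Y)}$ (with no factor of $\Lambda$, matching \eqref{eq:ypar}).

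For the second formula in the $\Lambda = 0$ case, I would again reduce to $x = \mathbb{1}$; here invariance needs checking because $\det \Im(\bar{x}' - \bar{x})$ is an affine expression, but in the Minkowski model the isometries act affinely and one can verify directly that $\Im(\overline{A \rhd x'} - \overline{A \rhd x})$ is an $\PSL(2,\R)$-conjugate of $\Im(\bar{x}' - \bar{x})$ (using that for $\Lambda = 0$ the matrix $A$ decomposes so that its "real part" acts by conjugation on the $\ell$-part), so the determinant is preserved and the sign of the trace of $\bar{x}' - \bar{x}$ is controlled. Then with $x = \mathbb{1}$ and $x' = \exp(\tfrac\theta2 X)\rhd\mathbb{1} = c_0(\theta)\mathbb{1} + s_0(\theta) X = \mathbb{1} + \theta X$ (since $c_0 \equiv 1$, $s_0(\theta) = \theta$ when $\Lambda\sigma = 0$ — note for $\Lambda = 0$ one always has $\Lambda\sigma = 0$), one gets $\bar{x}' - \bar{x} = \theta X$, so $\Im(\bar{x}' - \bar{x}) = \theta\,\Im X$ and $-\det\Im(\bar{x}'-\bar{x}) = -\theta^2 \det(\Im X) = \theta^2 \langle X, X\rangle_{\x_0} = \sigma \theta^2 = \sigma\, d(\mathbb{1}, x')^2$, since $X$ is a unit vector with $\langle X,X\rangle = \sigma$.

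The main obstacle I anticipate is the book-keeping in the two invariance verifications — in particular for $\Lambda = 0$, making precise how $\PGL^+(2,\bbC_0) = \PGL(2,\R)\ltimes\sl(2,\R)$ acts on unit-determinant representatives and on their imaginary parts, and confirming that the sign conventions (the sign of $\tr$, the choice of representative with $\tr \geq 0$) are consistent so that $d(\cdot,\cdot)$ and $\sigma$ are well-defined on the relevant geodesic segments. The trigonometric identities themselves are routine given \eqref{eq:trigids} and \eqref{eq:trigdev}.
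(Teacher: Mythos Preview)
Your proposal is correct and follows essentially the same approach as the paper: both use the geodesic parametrization from Proposition~\ref{prop:geodesics} to write $\bar x'\bar x^{-1}$ as a conjugate of $c_{\Lambda\sigma}(t)\mathbb{1}+s_{\Lambda\sigma}(t)X$ and then take the trace, with the $\Lambda=0$ formula handled by the observation that the $\PGL^+(2,\bbC_0)$-action on $\x_0$ factors through $\Re(A)$-conjugation. The only difference is organizational: you first verify isometry-invariance of both sides and reduce to $x=\mathbb{1}$, whereas the paper computes directly at a general basepoint $A\rhd\mathbb{1}$ and lets the conjugation-invariance of the trace do the same work implicitly.
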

  
  \begin{proof} Let  $x:\R\to\X_\Lambda$ be a spacelike or timelike geodesic parametrized as in \eqref{eq:xpar} with  $x(0)=\bar x=A\rhd\mathbb 1$ and  $t\geq 0$ such that $x(t)=\bar x'$. Then the arc length between $x$ and $x'$ is $d(x,x')=t$, and from \eqref{eq:xpar} one has
  \begin{align*}
  |\tr(\bar x'\cdot \bar x^\inv)|=\Big|\tr\left(A\cdot (c_{\Lambda\sigma(X)}(t)\mathbb 1+s_{\Lambda\sigma(X)}(t) X)\cdot A^\inv\right)\Big|=2|c_{\Lambda\sigma(X)}(t)|.
  \end{align*}
  The proof for points $y,y'\in\Y_\Lambda$ is analogous.
  
  For $\Lambda=0$ and $x,x'\in\X_\Lambda$ the geodesic with $x(0)=A\rhd\mathbb 1=\bar x$ and $x(t)=\bar x'$ is given by  $x(t)=A\rhd(\mathbb{1}+  t X)$ with a unit vector $ X\in\x_\Lambda$. And we have
  \begin{align*}
  \det \Im(\bar x'-\bar x)=t^2 \det ( \Im(A\rhd X))=t^2\det (X)=t^2\sigma(X)=\sigma(X)d(x,x')^2,
  \end{align*}
  where we used that $X$ is a unit vector and that $A\rhd X=\Re(A)\rhd X=\Re(A) X \Re(A)^\inv$ for all $ X\in \x_\Lambda$ and  $A\in \PGL(2,\bbC_\Lambda)=\PGL(2,\R)\ltimes\sl(2,\R)$ if $\Lambda=0$.
  \end{proof}

   The explicit description of geodesics in Proposition \ref{prop:geodesics} also allows one to compute their stabilizer groups.

  \begin{proposition}\label{prop:stabilizer}
  For a spacelike or timelike geodesic $x:\R\to\X_\Lambda$, parametrized as in \eqref{eq:xpar}, the subgroup of $\PGL^+(2,\bbC_\Lambda)$ stabilizing $x(\R)$ and preserving its orientation is given by
  \begin{align*}
  \Stab(x(\R))=\Big\{A\exp(\tfrac{\theta}{2} X)U A^\inv\mid \theta\in\R,\, U\in\Stab(X)\Big\}.
  \end{align*}
  Similarly, for a spacelike or timelike geodesic $y:\R\to\Y_\Lambda$, parametrized as in \eqref{eq:ypar}, the subgroup of $\PGL^+(2,\bbC_\Lambda)$ stabilizing $y(\R)$ and preserving its orientation, is given by
  \begin{align*}
  \Stab(y(\R))=\Big\{B\exp(\tfrac{\theta}{2}Y)VB^\inv \mid \theta\in \R,\, V\in\Stab(Y)\Big\}.
  \end{align*}
  \end{proposition}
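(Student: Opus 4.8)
The plan is to reduce the statement to the case $A=B=\mathbb 1$ of a geodesic through $\mathbb 1$, and then to combine the explicit matrix form of such geodesics from Proposition \ref{prop:geodesics} with the descriptions of $\Stab(X)$ and $\Stab(Y)$ in Proposition \ref{prop:stabilisers} and with the tangent-space action $U\rhd X=UXU^\inv$ recorded in Section \ref{sec:tangent}. I carry out the argument for $\X_\Lambda$; the one for $\Y_\Lambda$ is identical after replacing $\circ$ by $\dag$, $X$ by $Y$, the exponents $\Lambda\sigma(X)$ by $-\sigma(Y)$ as in \eqref{eq:ypar}, and the $\rhd$-action accordingly. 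First I would fix the geodesic $x_0$ through $\mathbb 1$ parametrized as in \eqref{eq:xpar} with $A=\mathbb 1$, i.e.\ $x_0(t)=c_{\Lambda\sigma(X)}(t)\mathbb 1+s_{\Lambda\sigma(X)}(t)X=\exp(tX)$, so that $x(t)=A\rhd x_0(t)$. Conjugation by $A$ identifies $\Stab(x(\R))$ with $\Stab(x_0(\R))$ and transports the asserted normal form, so it suffices to prove $\Stab(x_0(\R))=\{\exp(\tfrac\theta2 X)U\mid\theta\in\R,\ U\in\Stab(X)\}$. I would also record here the elementary facts used repeatedly, all consequences of Cayley--Hamilton $X^2=-\det(X)\mathbb 1$ together with \eqref{eq:trigids} and \eqref{eq:trigdev}: that $\exp(sX)\exp(s'X)=\exp((s+s')X)$, that $\det\exp(sX)=1$ so $\exp(sX)\in\PGL^+(2,\bbC_\Lambda)$, and that $\exp(sX)^\circ=\exp(sX)$ since $X^\circ=X$ and $\circ$ is $\R$-linear.

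For the inclusion $\supseteq$ I would compute $\exp(\tfrac\theta2 X)\rhd x_0(t)=\exp(\tfrac\theta2 X)\,\exp(tX)\,\exp(\tfrac\theta2 X)=\exp((t+\theta)X)=x_0(t+\theta)$, so that $\exp(\tfrac\theta2 X)$ translates $x_0(\R)$ along itself and preserves its orientation. For $U\in\Stab(X)$, Proposition \ref{prop:stabilisers} writes $U$ as a linear combination of $\mathbb 1$ and $\Im\hat X=\Im X$; since $X=\ell\,\Im X$, this $U$ commutes with $X$, hence fixes $\mathbb 1=x_0(0)$ and satisfies $U\rhd X=UXU^\inv=X$, so by Proposition \ref{prop:geodesics} it fixes the whole geodesic $x_0$ pointwise (an isometry is determined by its value and differential at one point). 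Consequently each product $\exp(\tfrac\theta2 X)U$ lies in $\Stab(x_0(\R))$ and preserves the orientation of $x_0(\R)$.

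For the converse inclusion $\subseteq$, which is the heart of the argument, I would take $g\in\PGL^+(2,\bbC_\Lambda)$ stabilizing $x_0(\R)$ with its orientation. Then $g\rhd\mathbb 1=g\rhd x_0(0)$ lies on $x_0(\R)$, so $g\rhd\mathbb 1=x_0(t_0)=\exp(\tfrac{t_0}2 X)\rhd\mathbb 1$ for some $t_0\in\R$. Using that $\rhd$ is a left action, $g':=\exp(-\tfrac{t_0}2 X)g$ satisfies $g'\rhd\mathbb 1=\mathbb 1$, so $g'\in\Stab(\mathbb 1,\X_\Lambda)$, and $g'$ still stabilizes the oriented curve $x_0(\R)$ while fixing the point $\mathbb 1$ on it. Hence $g'$ carries the geodesic $x_0$ to a geodesic based at $\mathbb 1$ tracing the same set in the same direction, so $dg'_{\mathbb 1}$ sends the unit velocity $X=\dot x_0(0)$ to a positive multiple of itself; since $g'$ is an isometry this multiple is $1$, i.e.\ $g'\rhd X=X$ in $T_{\mathbb 1}\X_\Lambda\cong\x_\Lambda$, so $g'\in\Stab(X)$ by Proposition \ref{prop:stabilisers}. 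Thus $g=\exp(\tfrac{t_0}2 X)g'$ has the required form. I would close by treating the closed-geodesic cases (spacelike with $\Lambda=1$, timelike with $\Lambda=-1$): there $t_0$ is determined only modulo the period $p$, but replacing $t_0$ by $t_0+p$ multiplies $\exp(\tfrac{t_0}2 X)$ by $\exp(\tfrac p2 X)$, which commutes with $X$ and sends $\mathbb 1$ to $\exp(pX)=\mathbb 1$ in $\RP^3$ and hence lies in $\Stab(X)$, so the right-hand set is unaffected. The main obstacle is precisely this rigidity step --- deducing $g'\in\Stab(X)$ from the hypothesis that $g'$ fixes $\mathbb 1$ and the orientation of the geodesic --- which needs the little extra care above in the closed case but is otherwise a routine consequence of the uniqueness of an isometry with prescribed $1$-jet.
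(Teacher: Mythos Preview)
Your proof is correct and follows essentially the same route as the paper: reduce by conjugation to the geodesic through $\mathbb 1$, write an arbitrary stabilizing isometry as a translation $\exp(\tfrac\theta2 X)$ composed with an element of $\Stab(\mathbb 1,\X_\Lambda)$, and then use orientation-preservation together with the non-degeneracy of $\langle X,X\rangle$ to force this element into $\Stab(X)$ via Proposition~\ref{prop:stabilisers}. Your version is just more explicit than the paper's, spelling out the $\supseteq$ inclusion and the closed-geodesic periodicity, but there is no substantive difference in method.
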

  
  \begin{proof} 
  As all geodesics are obtained from geodesics through $\mathbb 1$ by the action of the isometry groups, we can assume $A=B=\mathbb 1$.
  For any isometry $T\in \Stab(x(\R))$ there is $\theta\in \R$ with  $T\rhd \mathbb 1=x(\theta)$. This implies  $T=\exp(\tfrac {\theta} 2 X)U$, where $U\in \Stab (\mathbb 1)$ with $U\rhd  \R X=\R X$. Due to invariance of the bilinear form \eqref{eq:detmetric} on $\x_\Lambda$, because $x$ is spacelike or timelike and because  $T$ preserves the orientation of $x$,  we have
   $U\rhd X=X$, and the claim follows from Proposition \ref{prop:stabilizer}. The proof for geodesics in $\Y_\Lambda$ is analogous.
  \end{proof}
  
  
  The parameter $\theta$ in Proposition \ref{prop:stabilizer} describes a translation along the geodesics  $x: \R\to \X_\Lambda$ and  $y: \R\to \Y_\Lambda$, which corresponds to a shift $t\mapsto t+\theta$ in the parametrization in Proposition \ref{prop:geodesics}.   It is the arc length of the geodesic segment between a point on  $x$ or $y$ and its image. 
  The parameters $a$ and $b$ that define the elements $U=a\mathbb{1}+ b\Im X\in\Stab(X)$ and $V=a\mathbb{1}+\ell bY\in\Stab(Y)$ via Proposition \ref{prop:stabilisers}  describe generalized angles between geodesic planes through $x$ and $y$. More precisely, these angles are given by
  $$\varphi=2ct_{-\sigma(X)}^{-1}\left(\frac a b\right),\qquad\qquad \varphi=2ct_{\Lambda\sigma(Y)}^\inv\left(\frac a b\right),$$
  for geodesics $x:\R\to \X_\Lambda$ and $y: \R\to \Y_\Lambda$, respectively. 
  In the first case, the parameter $\varphi$ is the rapidity of a Lorentzian boost or the angle of a rotation around the geodesic  $x:\R\to\X_\Lambda$.
  In
  hyperbolic geometry, which corresponds to $\Y_\Lambda$ for $\Lambda=1$, the parameter $\varphi$ 
  describes the angle between a plane containing the geodesic $y$ and its image. We will use the nomenclature derived from hyperbolic geometry and call $\theta$ and $\phi$ the shearing and bending parameters along $x$ and $y$, respectively. 
  
  The parametrization of geodesics in terms of the matrix exponential in Proposition \ref{prop:geodesics} also gives rise to a parametrization of the geodesic planes in $\X_\Lambda$.  As the isometry group $\PGL^+(2,\bbC_\Lambda)$ acts transitively on $\X_\Lambda$,  the geodesic planes through $x=A\rhd \mathbb{1}$ are obtained from the geodesic planes containing $\mathbb{1}$  by the action of isometries.  Using the parametrization of the geodesics in Proposition \ref{prop:geodesics} and 
  the non-degenerate bilinear form on $\x_\Lambda$ from \eqref{eq:metric}, one then obtains

  \begin{proposition}\label{prop:geodplane}
  For every point $x\in\X_\Lambda$ and tangent vector $X\in T_x\X_\Lambda$, there is a unique geodesic plane 
  $P$ with $x\in P$  such that the tangent vectors of geodesics in $P$  at $x$ span $X^\perp$.
  If we parametrize  $x=A\rhd\mathbb 1$ and $X=A\rhd N$ with $A\in \PGL^+(2,\bbC_\Lambda)$ and $N\in\x_\Lambda$, then
  \begin{align*}
  P=\Big\{ A\rhd \exp \Big( t_1X_1+t_2X_2\Big)\mid t_1,t_2\in\R\Big\}.
  \end{align*}
  for any linearly independent pair  $X_1,X_2\in N^\perp$. We call $X$ a normal vector to  $P$ based at $x$.
  \end{proposition}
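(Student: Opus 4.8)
The plan is to reduce to the base point $\mathbb 1$ by equivariance and then analyse geodesic planes through $\mathbb 1$ via the linear structure of the $\circ$-fixed space $\R\mathbb 1\oplus\x_\Lambda$ together with the exponential parametrization of Lemma \ref{lem:expara}. Note first that $x\mapsto A\rhd x=AxA^\circ$ is $\R$-linear on $\Mat(2,\bbC_\Lambda)$, preserves the subspace of $\circ$-symmetric matrices, and multiplies determinants by the positive real $|\det A|^2$; hence it maps $\X_\Lambda$ to itself and sends projectivizations of $3$-dimensional subspaces to projectivizations of $3$-dimensional subspaces, so it permutes geodesic planes. Its differential at $\mathbb 1$ is $Z\mapsto A\rhd Z$, which by \eqref{eq:metric} is a linear isometry $\x_\Lambda\to T_x\X_\Lambda$ and so preserves orthogonal complements. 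Since $x=A\rhd\mathbb 1$, $X=A\rhd N$, and $A\rhd[\exp Z]=[A\rhd\exp Z]$, it suffices to prove that there is a unique geodesic plane through $\mathbb 1$ whose geodesics have tangent directions at $\mathbb 1$ spanning $N^\perp$, and that it equals $\{\,[\exp Z]\mid Z\in N^\perp\,\}$; applying the isometry $A\rhd(-)$ then gives the general statement, noting that $\{t_1X_1+t_2X_2\mid t_1,t_2\in\R\}=N^\perp$ for any basis $X_1,X_2$ of $N^\perp$.

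For $x=\mathbb 1$, a geodesic plane through $\mathbb 1$ is by definition $[V]\cap\X_\Lambda$ for a $3$-dimensional subspace $V$ of the $\circ$-fixed space with $\mathbb 1\in V$; since that space equals $\R\mathbb 1\oplus\x_\Lambda$ and $\mathbb 1\notin\x_\Lambda$, one has $V=\R\mathbb 1\oplus W$ with $W:=V\cap\x_\Lambda$ two-dimensional. For traceless $Z$ one has $Z^2\in\R\mathbb 1$ by Cayley--Hamilton, so by Proposition \ref{prop:geodesics} the geodesic through $\mathbb 1$ in a direction $Z\in\x_\Lambda$ is $[\R\mathbb 1\oplus\R Z]\cap\X_\Lambda$; it lies in $[V]\cap\X_\Lambda$ exactly when $Z\in W$, and then its tangent direction at $\mathbb 1$ is $Z$. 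Conversely, every geodesic contained in $[V]\cap\X_\Lambda$ through $\mathbb 1$ is of this form for some $Z\in W$. Hence the tangent directions at $\mathbb 1$ of geodesics lying in the plane span exactly $W$, and requiring this span to be $N^\perp$ forces $W=N^\perp$, which determines $V$ and hence the plane. This gives existence and uniqueness.

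It remains to show $[\R\mathbb 1\oplus N^\perp]\cap\X_\Lambda=\{\,[\exp Z]\mid Z\in N^\perp\,\}$. For $Z\in N^\perp$ one has $\exp Z\in\R\mathbb 1\oplus\R Z\subset\R\mathbb 1\oplus N^\perp$, and $[\exp Z]\in\X_\Lambda$ because $\exp Z=\exp(\tfrac\theta2\hat Z)\rhd\mathbb 1$ for a unit vector $\hat Z$ and some $\theta\ge0$, as in Lemma \ref{lem:expara}; this proves one inclusion (and that the plane is nonempty). For the reverse inclusion, let $p$ lie in the plane. By Lemma \ref{lem:expara} and the identity $\exp(\tfrac\theta2\hat X)\rhd\mathbb 1=\exp(\theta\hat X)$ (valid since $\hat X^\circ=\hat X$) we may write $p=[\,c_{\Lambda\sigma(\hat X)}(\theta)\mathbb 1+s_{\Lambda\sigma(\hat X)}(\theta)\hat X\,]$ for a unit vector $\hat X\in\x_\Lambda$. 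Comparing $\x_\Lambda$-components in $\R\mathbb 1\oplus\x_\Lambda$ shows $s_{\Lambda\sigma(\hat X)}(\theta)\hat X\in N^\perp$: either this is nonzero, so $\hat X\in N^\perp$ and $p=[\exp(\theta\hat X)]$ with $\theta\hat X\in N^\perp$; or it vanishes, whence $c_{\Lambda\sigma(\hat X)}(\theta)=\pm1$ by the first identity of \eqref{eq:trigids} and $p=[\mathbb 1]=[\exp 0]$. Transporting back by $A$ then yields the stated description of $P$, which is independent of the chosen basis of $N^\perp$.

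The one delicate point is the reverse inclusion above: one must make sure that the surjectivity of the exponential parametrization of $\X_\Lambda$ from Lemma \ref{lem:expara} restricts correctly to the plane, in particular at the degenerate parameters where $s_{\Lambda\sigma}$ vanishes and the geodesic has already returned to $\mathbb 1$ (which can happen when $\Lambda\sigma(\hat X)=1$). Everything else is linear algebra inside $\R\mathbb 1\oplus\x_\Lambda$ together with the routine equivariance bookkeeping of the first step.
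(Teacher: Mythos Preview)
Your proof is correct and follows the approach the paper indicates: the paper does not give a formal proof of this proposition but simply states that it is obtained from the geodesic parametrization of Proposition~\ref{prop:geodesics} together with the non-degenerate bilinear form on $\x_\Lambda$. You have carefully filled in exactly these details---the reduction to $\mathbb 1$ by equivariance, the identification of geodesic planes through $\mathbb 1$ with two-dimensional subspaces $W\subset\x_\Lambda$, and the verification that the exponential parametrization exhausts the plane---so your argument is precisely what the paper leaves implicit.
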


  \subsection{Lightlike geodesic planes in \texorpdfstring{$\X_\Lambda$}{X Lambda}}\label{sec:lightplane}
  In this section, we derive some elementary properties of lightlike planes in $\X_\Lambda$ that will be identified as the duals of certain statements about the ideal boundary $\partial_\infty \Y_\Lambda$ in the next section. 
   Recall that a geodesic plane in $\X_\Lambda$ is called lightlike, if it contains a lightlike geodesic, but no timelike geodesics. This is equivalent to its normal vector from Proposition \ref{prop:geodplane} being lightlike.

It follows directly that  two distinct  lightlike  planes in $\X_\Lambda$ that intersect always intersect in a spacelike geodesic.  Conversely, for any spacelike geodesic in  $\X_\Lambda$, there is a unique  pair of lightlike planes that intersect in this geodesic. In the following we often need an explicit parametrization of his intersection geodesic.

  \begin{lemma}\label{lem:lplaneint}
If two distinct lightlike planes $P_1,P_2$ in $\X_\Lambda$ intersect, then for any point $x\in P_1\cap P_2$,  there is an isometry that sends $x$ to $\mathbb 1$, their intersection to the spacelike geodesic
      \begin{align}\label{eq:xmatrixe1}
&g(t)=\exp(tX)\qquad   X=\ell\left(\begin{matrix} 1 & 0 \cr 0 & -1\end{matrix}\right)
  \end{align}
  and their normal vectors in $x$ to
    \begin{align}\label{eq:normalvec}
  N_1=\ell \left(\begin{matrix} 0 & 0 \cr 1 & 0 \end{matrix}\right),\qquad\qquad N_2=\ell \left(\begin{matrix} 0 & -1 \cr 0 & 0 \end{matrix}\right).
  \end{align}
 
  \end{lemma}
  \begin{proof}
By applying isometries, we can assume $x=\mathbb{1}$.
   The action of 
  $\PSL(2,\R)_\Lambda \subset \Stab(\mathbb 1,\X_\Lambda)$ on $\x_\Lambda=\ell\mathfrak{sl}(2,\R)$ then coincides with the action of $\PSL(2,\R)$ on Minkowski space, and the action on the normal vectors of these planes with the $\PSL(2,\R)$-action on the set of lightlike rays in 3d Minkowski space. 
  This can be identified with the $\PSL(2,\R)$-action on $\partial \mathbb H^2$, which is known to be 3-transitive.
 Hence, there is an isometry in $\PSL(2,\mathbb R)$  that sends 
  the normal vectors of the planes to \eqref{eq:normalvec}. 
  Then we have $N_1^\bot\cap N_2^\bot=\R X$ with $X$  unique up to real rescaling and given by
\eqref{eq:xmatrixe1}. 
  \end{proof}

An analogous parametrization exists  for triples of lightlike planes that intersect in a common point. 
 In this case, the 3-transitivity of the $\PSL(2,\mathbb R)$-action on $\partial \mathbb H^2$ implies uniqueness up to permutations.

  \begin{lemma}\label{cor:planeintersect}
  If three distinct lightlike planes in $\X_\Lambda$ intersect in a  common point $x$, then there is an isometry that sends $x$ to $\mathbb{1}$ and their normal vectors in $x$ to
  \begin{align*}
  N_1=\ell\left(\begin{matrix} 0 & 0 \cr 1 & 0 \end{matrix}\right),\qquad N_2=\ell\left(\begin{matrix} 0 & -1 \cr 0 & 0 \end{matrix}\right),\qquad N_3=\ell\left(\begin{matrix} 1 & -1 \cr 1 & -1 \end{matrix}\right).
  \end{align*}
  This isometry is unique up to isometries permuting the three planes.
  \end{lemma}
  
  \begin{proof}  By applying isometries, we can assume that the intersection point of these planes is $\mathbb 1$.  
  The action of 
  $\PSL(2,\R)_\Lambda \subset \Stab(\mathbb 1,\X_\Lambda)$ on $\x_\Lambda=\ell\mathfrak{sl}(2,\R)$ then coincides with the action of $\PSL(2,\R)$ on Minkowski space, and the action on the normal vectors of these planes on the $\PSL(2,\R)$-action on the set of lightlike rays in 3d Minkowski space. This can be identified with the $\PSL(2,\R)$-action on  $\partial \bbH^2$, which is known to be 3-transitive.
  \end{proof}

  \subsection{The ideal boundary of \texorpdfstring{$\Y_\Lambda$}{Y Lambda}} 
  \label{sec:idealbound}
   Under the duality between $\X_\Lambda$ and $\Y_\Lambda$ from Sections \ref{sec:projdual1} and  \ref{subsec:ideldual}, lightlike geodesic planes in $\X_\Lambda$ are dual to points on the ideal boundary of $\Y_\Lambda$. We thus summarize the properties of the ideal boundary $\partial_\infty \Y_\Lambda$ from \cite{DaPhD,Da1,Da2}. To make the paper self-contained, and because details will be needed in the following, we also include proofs, adapted from \cite{Da2}. We also 
   point out their duality with results on lightlike planes and show that in some cases this provides an additional geometric interpretation.
  
  In the matrix parametrization of $\Y_\Lambda$, the ideal boundary $\partial_\infty\Y_\Lambda$  becomes the set of rank 1 matrices modulo real rescaling
  \begin{align*}
  \partial_\infty\Y_\Lambda
  &=\Rquotient{\Big\{vv^\dag\in\Mat(2,\bbC_\Lambda) \mid v\in\bbC_\Lambda^2,\; vv^\dag\neq 0 \Big\}}{\R^\times}.
  \end{align*}
  This identifies $\partial_\infty\Y_\Lambda$ with the generalized complex projective line
  \begin{align}\label{eq:cproj}\bbC_\Lambda\mathrm{P}^1=\Rquotient{\Big\{v\in\bbC_\Lambda^2\mid vv^\dag\neq 0\Big\}}{\bbC_\Lambda^\times}=\begin{cases}
  \R \rmP^1\times \R \rmP^1,&\Lambda=-1,
  \cr
  \bbC \rmP^1, &\Lambda=1,
  \cr
  \R \rmP^1\times\R, &\Lambda=0.
  \end{cases}
  \end{align}
It should be mentioned, however, that the topology induced by this identification does not coincide with the one induced by $\RP^3$ for $\Lambda=0$. 

For $\Lambda=1$ this holds by definition. For $\Lambda=-1$ the identification is given by the map
  \begin{align}
 \R \rmP^1\times \R \rmP^1\to \bbC_\Lambda\mathrm{P}^1,\qquad  \left(\left[\begin{matrix} u\\ v\end{matrix}\right] , \left[\begin{matrix} x\\ y\end{matrix}\right]\right)\mapsto\left[\begin{matrix}u+x+\ell(u-x)\\ v+y+\ell(v-y)\end{matrix}\right]\qquad u,v,x,y\in\R,
  \end{align}
  and for $\Lambda=0$ by  the map
    \begin{align}
 \R \rmP^1\times\R\to  \bbC_\Lambda\mathrm{P}^1,\qquad   \left( \left[\begin{matrix} x\\ y\end{matrix}\right], u\right)\mapsto\left[\begin{matrix}x+\ell yu\\ y+\ell xu\end{matrix}\right]\qquad u,x,y\in\R.
  \end{align}
  The action \eqref{eq:actiony} of 
   $\PGL^+(2,\bbC_\Lambda)$
  on $\Y_\Lambda$ extends to a
   $\PGL^+(2,\bbC_\Lambda)$-action on $\partial_\infty \Y_\Lambda$ 
  \begin{align*}
  \rhd: \PGL^+(2,\bbC_\Lambda)\times\partial_\infty \Y_\Lambda\to\partial_\infty \Y_\Lambda,\qquad\qquad B\rhd Y=BYB^\dag.
  \end{align*}
  Under the identification of $\partial_\infty \Y_\Lambda$ with $\bbC_\Lambda \rmP^1$, this action becomes the standard action of
   $\PGL^+(2,\bbC_\Lambda)$
   on $\bbC_\Lambda\rmP^1$ via projective transformations
  \begin{align*}
  \rhd:\PGL^+(2,\bbC_\Lambda)\times\bbC_\Lambda\rmP^1\to\bbC_\Lambda\rmP^1,\qquad\qquad B\rhd [v]=[B\cdot v].
  \end{align*}
  Note that for $\Lambda=1$ this coincides with the action of M\"obius transformations on the Riemann sphere $\bbC\rmP^1=\partial_\infty \bbH^3$. In this case, the condition $vv^\dag\neq 0$ in \eqref{eq:cproj} simply states that $v\neq 0$. By rescaling representatives of points in  $\bbC\rmP^1$ such that their second entry is $1$, one obtains: 
  \begin{align}\label{eq:mts}
  \begin{pmatrix} a & b\\ c & d\end{pmatrix}\rhd \left[ \begin{matrix}z \\ 1\end{matrix}\right]=\left[ \begin{matrix} az+b \\ cz+d\end{matrix}\right]=\left[ \begin{matrix}\frac{az+b}{cz+d} \\ 1\end{matrix}\right]\qquad\text{with}\qquad  \left[ \begin{matrix}\infty \\ 1\end{matrix}\right]=\left[ \begin{matrix}1 \\ 0\end{matrix}\right].
  \end{align}
  In the following,   the action of  $\PGL^+(2,\bbC_\Lambda)$
   on $\bbC_\Lambda \rmP^1$  is often described with respect to three fixed reference points  $v_1,v_2,v_3\in \bbC_\Lambda P^1$
   \begin{align}\label{eq:refvecs}
   v_1=\left[ \begin{matrix}1 \\ 0\end{matrix}\right]=\infty,\qquad v_2=\left[ \begin{matrix}0 \\ 1\end{matrix}\right]=0,\qquad v_3=\left[ \begin{matrix}1 \\ 1\end{matrix}\right]=1, 
    \end{align}
  which correspond to the points  $\infty,0,1\in\CP^1=\bbC\cup\{\infty\}$ for $\Lambda=1$. We also write $v_1=\infty$, $v_2=0$ and $v_3=1$ to denote the points
  $v_1,v_2,v_3\in\bbC_\Lambda\rmP^1$ in \eqref{eq:refvecs} for $\Lambda\neq 1$.

  The subgroup of $\PGL^+(2,\bbC_\Lambda)$ that permutes $v_1,v_2,v_3$
  is the group of order six generated by the classes of 
  \begin{align}\label{eq:permutmatrix}
  T=\begin{pmatrix} 0 & 1\\ -1 & 1\end{pmatrix},\qquad\qquad I=\begin{pmatrix}0 & 1\\ 1 & 0\end{pmatrix}.
  \end{align}
  It permutes the points  $v_1,v_2,v_3$ 
  according to
  \begin{align*}
  &T: (v_1,v_2,v_3)\mapsto (v_2,v_3,v_1), \qquad\qquad I: (v_1,v_2,v_3)\mapsto (v_2,v_1,v_3).
  \end{align*}
  
  Spacelike geodesics in $\Y_\Lambda$  have two endpoints in  $\partial_\infty\Y_\Lambda$,  obtained from their parametrization  \eqref{eq:ypar} 
  as the limits $t\to\pm \infty$. These endpoints are the duals of the two unique lightlike planes  that intersect in the dual spacelike geodesic in $\X_\Lambda$.
  The action of the isometry group $\PGL^+(2,\bbC_\Lambda)$ on $\partial_\infty \Y_\Lambda$ allows one to map these endpoints to fixed reference points, namely the points $v_1v_1^\dag$ and $v_2v_2^\dag$ for $v_1$, $v_2$ given in \eqref{eq:refvecs}.  This is dual to the statement in  Lemma \ref{lem:lplaneint} that by acting with isometries, one can transform the normal vectors of the lightlike planes into   \eqref{eq:normalvec}.

  \begin{lemma}\label{lem:2ptbound} Let $y_+,y_-\in\partial_\infty\Y_\Lambda$ be endpoints of a spacelike geodesic in $\Y_\Lambda$. Then there is an isometry  $B\in \PGL^+(2,\bbC_\Lambda)$ such that 
  \begin{align}\label{eq:2ptgoedspl}
  B\rhd y_+=v_1v_1^\dag=\left(\begin{matrix} 1 & 0\\ 0 & 0\end{matrix}\right),\qquad\qquad B\rhd y_-=v_2v_2^\dag=\left(\begin{matrix}0 & 0\\ 0 & 1\end{matrix}\right).
  \end{align}
  \end{lemma}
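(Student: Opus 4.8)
The plan is to reduce to a model spacelike geodesic via Proposition~\ref{prop:geodesics} and then produce the required isometry by an explicit computation with $2\times2$ matrices over $\bbC_\Lambda$. By Proposition~\ref{prop:geodesics} (cf.\ \eqref{eq:ypar}), since $y_+,y_-$ are the endpoints of a spacelike geodesic, that geodesic can be written as $y(t)=B\rhd\exp(tY)=B\rhd(\cosh t\,\mathbb 1+\sinh t\,Y)$ with $B\in\PGL^+(2,\bbC_\Lambda)$ and $Y\in\y_\Lambda$ a unit spacelike vector (here $\sigma(Y)=1$, so $c_{-\sigma(Y)}=\cosh$ and $s_{-\sigma(Y)}=\sinh$). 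Writing $\cosh t\,\mathbb 1+\sinh t\,Y=\tfrac{e^{t}}{2}(\mathbb 1+Y)+\tfrac{e^{-t}}{2}(\mathbb 1-Y)$, rescaling the representative by $2e^{-t}$ and letting $t\to+\infty$ gives $y_+=[\,B(\mathbb 1+Y)B^\dag\,]$, and similarly $y_-=[\,B(\mathbb 1-Y)B^\dag\,]$ as $t\to-\infty$.

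Next I would analyze $\mathbb 1\pm Y$. As a unit spacelike vector in $\y_\Lambda$, $Y$ has $\tr Y=0$ and $-\det Y=\langle Y,Y\rangle_{\y_\Lambda}=1$, so $\det Y=-1$, and Cayley--Hamilton gives $Y^{2}=\mathbb 1$. Hence $\mathbb 1\pm Y$ are $\dag$-hermitian (since $(\mathbb 1\pm Y)^\dag=\mathbb 1\pm Y$), rank one ($\det(\mathbb 1\pm Y)=0$), of trace $2$, and satisfy $(\mathbb 1+Y)(\mathbb 1-Y)=\mathbb 1-Y^{2}=0$. Using the description of $\partial_\infty\Y_\Lambda$ by rank-one matrices, I would write $\mathbb 1+Y=v_+v_+^\dag$ and $\mathbb 1-Y=v_-v_-^\dag$ with $v_\pm\in\bbC_\Lambda^{2}$; since $v^\dag v=\tr(vv^\dag)$ for every $v$, the trace being $2$ forces $v_\pm^\dag v_\pm=2$. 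Now set $\lambda:=v_+^\dag v_-\in\bbC_\Lambda$. The identity $(\mathbb 1+Y)(\mathbb 1-Y)=0$ reads $v_+(v_+^\dag v_-)v_-^\dag=\lambda\,v_+v_-^\dag=0$; right-multiplying by $v_-$ and using $v_-^\dag v_-=2$ yields $2\lambda v_+=0$, hence $\lambda v_+=0$; left-multiplying this by $v_+^\dag$ and using $v_+^\dag v_+=2$ yields $2\lambda=0$, so $\lambda=v_+^\dag v_-=0$.

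Finally I would assemble the isometry. Let $V=(\,v_+\mid v_-\,)\in\Mat(2,\bbC_\Lambda)$ have columns $v_+,v_-$. Using $v_+^\dag v_-=0$, $v_-^\dag v_+=\overline{v_+^\dag v_-}=0$ and $v_\pm^\dag v_\pm=2$, one gets $V^\dag V=2\,\mathbb 1$, hence $|\det V|^{2}=\det(V^\dag V)=4>0$; so $V$ is invertible over $\bbC_\Lambda$ and $[V^{-1}]\in\PGL^+(2,\bbC_\Lambda)$. Put $B':=[V^{-1}B^{-1}]\in\PGL^+(2,\bbC_\Lambda)$. Then $B'\rhd y_+=[\,(V^{-1}B^{-1})\,B(\mathbb 1+Y)B^\dag\,(V^{-1}B^{-1})^\dag\,]=[\,V^{-1}(\mathbb 1+Y)(V^{-1})^\dag\,]=[\,(V^{-1}v_+)(V^{-1}v_+)^\dag\,]=[\,e_1e_1^\dag\,]$, where the second equality cancels $B^{-1}B=\mathbb 1=B^\dag(B^{-1})^\dag$ and the third uses $\mathbb 1+Y=v_+v_+^\dag$ and $V^{-1}v_+=e_1$ (the first column of $V$); likewise $B'\rhd y_-=[\,e_2e_2^\dag\,]$. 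As $e_1e_1^\dag$ and $e_2e_2^\dag$ are precisely the matrices $v_1v_1^\dag$, $v_2v_2^\dag$ in \eqref{eq:2ptgoedspl}, relabelling $B:=B'$ gives the statement.

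The step I expect to require the most care is the normalization $\mathbb 1\pm Y=v_\pm v_\pm^\dag$, i.e.\ the fact that a rank-one $\dag$-hermitian matrix of positive trace is genuinely of the form $vv^\dag$ and not $-vv^\dag$. For $\Lambda=1$ this is the positive semidefiniteness of $\mathbb 1+Y$, whose eigenvalues are $2$ and $0$; for $\Lambda=0$ and $\Lambda=-1$, where $\bbC_\Lambda$ has zero divisors, it needs a short direct check, e.g.\ by diagonalizing the real $2\times2$ data underlying $Y$ and exhibiting $v_\pm$ explicitly. Everything else is routine; note that the spacelike hypothesis enters precisely through $Y^{2}=\mathbb 1$, hence $(\mathbb 1+Y)(\mathbb 1-Y)=0$, which produces the $\dag$-orthogonality $v_+^\dag v_-=0$ — the property that allows $v_+,v_-$ to be completed to an element of $\PGL^+(2,\bbC_\Lambda)$ rather than merely of $\PGL(2,\bbC_\Lambda)$.
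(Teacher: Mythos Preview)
Your argument is correct and takes a genuinely different route from the paper. The paper parametrizes the unit spacelike $Y\in\y_\Lambda$ explicitly as $Y=\begin{pmatrix} a & b+\ell c\\ b-\ell c & -a\end{pmatrix}$ with $a^2+b^2+\Lambda c^2=1$, writes down the vectors $v_\pm$ in coordinates, and then exhibits the isometry $B$ by an explicit matrix formula, with a case distinction according to whether $a\neq 1$ or $a\neq -1$. You instead argue structurally: Cayley--Hamilton gives $Y^2=\mathbb 1$, hence $(\mathbb 1+Y)(\mathbb 1-Y)=0$, from which you extract the $\dag$-orthogonality $v_+^\dag v_-=0$ and assemble $V=(v_+\mid v_-)$ with $V^\dag V=2\mathbb 1$. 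Your approach is more conceptual, avoids the case split, and makes transparent why the spacelike hypothesis is exactly what is needed (it is what forces $\det Y=-1$ and thus $Y^2=\mathbb 1$). The paper's approach, by contrast, has the advantage of producing the isometry $B$ in closed form, which is occasionally useful downstream. Regarding the step you flag as delicate---that $\mathbb 1\pm Y=v_\pm v_\pm^\dag$ with the correct sign---note that since $\tr(\mathbb 1\pm Y)=2>0$, at least one diagonal entry, say the $(1,1)$-entry $p$, is positive; then $v=(\sqrt p,\,\bar q/\sqrt p)^T$ (with $q$ the off-diagonal entry) does the job directly for all $\Lambda$, so no separate diagonalization is needed.
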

  
  \begin{proof}Using  \eqref{eq:ypar} we can parametrize any spacelike geodesic $y$ in  $\Y_\Lambda$ as
  \begin{align*}
  y(t)=A\rhd\Big(\cosh(t)\mathbb{1}+\sinh(t) Y\Big),
  \end{align*}
  with $A\in\PGL^+(2,\bbC_\Lambda)$ and a spacelike unit matrix $Y\in\y_\Lambda$. Any normalized spacelike matrix in $\y_\Lambda$  can be written as
  \begin{align*}
  Y=\left(\begin{matrix}a & b+\ell c \cr b-\ell c & -a\end{matrix}\right),\qquad \text{with}\qquad \langle Y,Y\rangle_{\y_\Lambda}=a^2+b^2+\Lambda c^2=1.
  \end{align*}
  The endpoints of the geodesic $y$ are then represented by the matrices
  \begin{align}\label{eq:endpoints}
  y_\pm=y(\pm\infty)=A\rhd(\mathbb{1}\pm Y)\in\partial_\infty\Y_\Lambda.
  \end{align}
  Using the identification of the boundary $\partial_\infty \Y_\Lambda$ with the complex projective line $\bbC_\Lambda\rmP^1$ from \eqref{eq:cproj},  one can parametrize the endpoints as $y_\pm= v_\pm\cdot v_\pm^\dag$ with 
   \begin{align*}
   &v_+=A\cdot\left[\begin{matrix} 1+a \cr b-\ell c\end{matrix}\right], & &v_-=A\cdot\left[\begin{matrix}-b-\ell c \cr 1+a \end{matrix}\right], & &\text{ for } a\neq -1,\nonumber\\
   & v_+=A\cdot\left[\begin{matrix} b+\ell c \cr 1-a\end{matrix}\right], & &v_-=A\cdot\left[\begin{matrix} 1-a\cr -b+\ell c\end{matrix}\right],& &\text{ for } a\neq 1.
   \end{align*}
  A direct computation then shows that 
  \eqref{eq:2ptgoedspl} is satisfied for the projective matrices
  \begin{align*}
  &B=\left(\begin{matrix} 1+a & b+\ell c \cr -b+\ell c & 1+a\end{matrix}\right) A^\inv,\qquad\qquad
  B=\left(\begin{matrix} b+\ell c & 1-a \cr 1-a & -b+\ell c\end{matrix}\right) A^\inv.
  \end{align*}
  for $a\neq -1$ and $a\neq 1$, respectively.
  \end{proof}
  
  \medskip
  It is shown in \cite[Proposition 2]{Da2}, see also the remark after \cite[Proposition 3]{Da2}, that this result extends to triples of points in $\partial_\infty \Y_\Lambda$, provided that they 
are contained in a common spacelike plane.
  In this case one can take the three reference points $v_1=\infty,v_2=0,v_3=1$ in \eqref{eq:refvecs}.

  \begin{proposition}[{\cite[Proposition 2]{Da2}}] \label{lem3pt}Let $y_1, y_2, y_3\in \partial_\infty\Y_\Lambda$ be distinct points on a common spacelike plane. Then there is a unique isometry $B\in \PGL^+(2,\bbC_\Lambda)$  such that
   $B\rhd y_1=\infty$, $B\rhd y_2=0$ and $B\rhd y_3=1$.
  \end{proposition}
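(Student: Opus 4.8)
The plan is to prove that $\PGL^+(2,\bbC_\Lambda)$ acts simply transitively on triples of pairwise spacelike-connected ideal points, treating uniqueness and existence separately.

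I would dispose of uniqueness first, as it is elementary and does not use the spacelike hypothesis. If $B$ and $B'$ both send $(y_1,y_2,y_3)$ to $(\infty,0,1)=([1{:}0],[0{:}1],[1{:}1])$, then $C:=B'B^\inv\in\PGL^+(2,\bbC_\Lambda)$ fixes these three points of $\bbC_\Lambda\rmP^1$. Writing $C=\left(\begin{smallmatrix}a&b\\c&d\end{smallmatrix}\right)$, the relation $[C\cdot(1,0)^T]=[1{:}0]$ forces $(a,c)=\lambda(1,0)$ for some $\lambda\in\bbC_\Lambda^\times$, hence $c=0$; likewise $[C\cdot(0,1)^T]=[0{:}1]$ forces $b=0$; and then $[C\cdot(1,1)^T]=[a{:}d]=[1{:}1]$ forces $a=d$. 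Thus $C$ is a scalar matrix, i.e.\ the identity of $\PGL^+(2,\bbC_\Lambda)$, and $B'=B$.

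For existence I would apply Lemma \ref{lem:2ptbound} to the spacelike geodesic with endpoints $y_1,y_2$ (taking $y_+:=y_1$, $y_-:=y_2$): this yields $B_0\in\PGL^+(2,\bbC_\Lambda)$ with $B_0\rhd y_1=v_1v_1^\dag$ and $B_0\rhd y_2=v_2v_2^\dag$, i.e.\ $B_0\rhd y_1=\infty$ and $B_0\rhd y_2=0$. Put $y_3':=B_0\rhd y_3$. Since isometries carry spacelike geodesics to spacelike geodesics, $y_3'$ is joined to both $\infty$ and $0$ by spacelike geodesics, and $y_3'\notin\{\infty,0\}$. The heart of the argument is to show that $y_3'$ then has the form $[z{:}1]$ for a \emph{unit} $z\in\bbC_\Lambda^\times$; granting this, the diagonal matrix $B_1=\left(\begin{smallmatrix}z^\inv&0\\0&1\end{smallmatrix}\right)$ has $\det B_1=z^\inv\in\bbC_\Lambda^\times$, hence lies in $\PGL^+(2,\bbC_\Lambda)$, fixes $\infty$ and $0$, and sends $[z{:}1]$ to $[1{:}1]=1$, so $B:=B_1B_0$ is the required isometry.

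The main obstacle is thus the passage from the geometric hypothesis (joined by a spacelike geodesic) to the algebraic one (the relevant coordinate is a unit of $\bbC_\Lambda$); this is the only step where the hypothesis is genuinely used. I would extract it from \eqref{eq:endpoints}: if $[p]$ and $[q]$ are the two endpoints of a spacelike geodesic, then by \eqref{eq:endpoints} there are $A\in\PGL^+(2,\bbC_\Lambda)$ and a spacelike unit $Y\in\y_\Lambda$ with $p=[A(\mathbb 1+Y)A^\dag]$ and $q=[A(\mathbb 1-Y)A^\dag]$; hence $p,q$ admit matrix representatives $P,Q$ with $P+Q=2AA^\dag$, which represents a point of $\Y_\Lambda$ and therefore has positive determinant. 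Since matrix representatives of a point of $\partial_\infty\Y_\Lambda$ are unique up to a nonzero real scalar, this gives $\det(\lambda P_0+\mu Q_0)>0$ for some $\lambda,\mu\in\R^\times$ and any fixed representatives $P_0,Q_0$. Applied with $p=y_3'=[v]$, $q=\infty$, taking $P_0=vv^\dag$ and $Q_0=v_1v_1^\dag$, a short computation in which the rank-one terms cancel gives $\det(\lambda P_0+\mu Q_0)=\lambda\mu\,|v_2|^2$ with $v=(v_1,v_2)$, forcing $|v_2|^2\neq0$, so $v_2\in\bbC_\Lambda^\times$ and $y_3'=[v_1v_2^\inv{:}1]=:[z{:}1]$; applied with $p=y_3'=[z{:}1]$, $q=0$, taking $P_0=\left(\begin{smallmatrix}|z|^2&z\\\bar z&1\end{smallmatrix}\right)$ and $Q_0=v_2v_2^\dag$, the analogous computation gives $\det(\lambda P_0+\mu Q_0)=\lambda\mu\,|z|^2$, forcing $z\in\bbC_\Lambda^\times$. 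The remaining verifications — the determinant bookkeeping, and that each matrix produced satisfies $|\det|^2>0$ — are routine.
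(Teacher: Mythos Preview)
Your approach mirrors the paper's: apply Lemma~\ref{lem:2ptbound} to send $y_1,y_2$ to $\infty,0$, argue that the image of $y_3$ is $[z{:}1]$ with $z$ a unit, and then use a diagonal matrix to move it to $1$. You add an explicit uniqueness argument (which the paper's proof omits), and your determinant computation $\det(\lambda P_0+\mu Q_0)=\lambda\mu\,|z|^2$ makes precise what the paper phrases as ``the condition $|\det(A_i)|^2>0$ together with~\eqref{eq:matrixboundarypar} implies that both entries of $w_3$ are units''.

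There is, however, a genuine gap in your final step when $\Lambda=-1$. You write ``$\det B_1=z^{-1}\in\bbC_\Lambda^\times$, hence lies in $\PGL^+(2,\bbC_\Lambda)$'', but membership in $\PGL^+(2,\bbC_\Lambda)$ requires $|\det B_1|^2>0$, not merely that $\det B_1$ be a unit. For $\Lambda\in\{0,1\}$ these conditions coincide, since $|w|^2=\Re(w)^2+\Lambda\,\Im(w)^2\geq0$ for every $w$; but for $\Lambda=-1$ one has $|w|^2=\Re(w)^2-\Im(w)^2$, which can be negative even when $w$ is a unit. Your positivity argument gives only $\lambda\mu\,|z|^2>0$ for \emph{some} $\lambda,\mu\in\R^\times$, hence $|z|^2\neq0$; it does not control the sign of $\lambda\mu$, so it does not yield $|z|^2>0$, and therefore does not place $B_1$ in $\PGL^+$. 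The paper's proof asserts $|z|^2>0$ at exactly this point without further detail, so the gap is shared; but in your write-up the issue surfaces as the non sequitur above, and the closing remark that the ``$|\det|^2>0$'' checks are ``routine'' does not dispose of it.
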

  \begin{proof}
  By Lemma \ref{lem:2ptbound}, one can assume that $y_1=v_1v_1^\dag$ and $y_2=v_2v_2^\dag$. As $y_3$ is connected to $y_1$ and  $y_2$ by spacelike geodesics, by \eqref{eq:endpoints} there are isometries
  $A_i\in \PGL^+(2,\bbC_\Lambda)$ and vectors $Y_i\in \y_\Lambda$  for  $i=1,2$ such that
  \begin{align}\label{eq:matrixboundarypar}
  A_i(\mathbb{1}-Y_i)A_i^\dag= v_iv_i^\dag,\qquad\qquad A_i(\mathbb{1}+Y_i) A_i^\dag= y_3.
  \end{align}
  Using the identification of $\partial_\infty \Y_\Lambda$ with $\bbC_\Lambda\rmP^1$, we can parametrize $y_3=w_3w_3^\dag$ with $w_3\in\bbC_\Lambda\rmP^1$.
  The condition $|\det(A_i)|^2>0$ together with \eqref{eq:matrixboundarypar} then implies that both entries of $w_3$ are units in $\bbC_\Lambda$, and by rescaling it, we can achieve that its second entry is 1 and its first entry is a unit $z\in\bbC_\Lambda^\times$, as in \eqref{eq:mts}. The condition that $y_1,y_2, y_3$ lie on a common spacelike plane  implies $|z|^2>0$ and that
  \begin{align*}
  B=\left(\begin{array}{cc} 1 & 0\\ 0 & z\end{array}\right)\in \PGL^+(2,\bbC_\Lambda)
  \end{align*}
  is an isometry with $B\rhd v_1=v_1$, $B\rhd v_2=v_2$ and $B\rhd w_3=v_3$.
  \end{proof}
  
  Note that for $\Lambda=1$  Proposition \ref{lem3pt}  is the well-known 3-transitivity of the action of $\PGL(2,\bbC)$ on the Riemann sphere $\CP^1$. However, for $\Lambda=0$ and $\Lambda=-1$ the action of $\PGL^+(2,\bbC_\Lambda)$ on $\bbC_\Lambda \rmP^1$ is in general not 3-transitive, even if one allows for permutations of the three points. In particular, the proof of Proposition \ref{lem3pt} shows that an element of $\PGL^+(2,\bbC_\Lambda)$ that stabilizes or exchanges $v_1=\infty$ and $v_2=0$ cannot map a general point $v\in \bbC_\Lambda \rmP^1$ to $v_3=1$.
  
Proposition \ref{lem3pt} can be viewed as the dual of Lemma \ref{cor:planeintersect}.
The dual of the spacelike plane in $\Y_\Lambda$ containing the points $y_1,y_2,y_3\in \partial_\infty\Y_\Lambda$ is a point in $\X_\Lambda$ that lies on the dual planes to $y_1,y_2, y_3$ and hence in their intersection. The normal vectors of the lightlike planes in Lemma \ref{cor:planeintersect} are thus given by the points  $y_1,y_2, y_3$ in Proposition \ref{lem3pt}.
  
  Given four distinct points in $\partial_\infty \Y_\Lambda$ such that any three of them lie on a common spacelike plane,
  one can apply an isometry to send three of them to the points $v_1v_1^\dag,v_2v_2^\dag,v_3v_3^\dag$, as in Proposition \ref{lem3pt}. As the fourth point is on a spacelike plane through  $v_1v_1^\dag$ and $v_2v_2^\dag$, it is represented by an element  $v_4\in\bbC_\Lambda\rmP^1$ whose entries are units in $\bbC_\Lambda$ by the proof of Proposition \ref{lem3pt}. Rescaling this element, one obtains
  \begin{align}\label{eq:yparamcp}
  v_4=\left[\begin{matrix} z\\ 1\end{matrix}\right],\qquad\text{with}\qquad z\in\bbC_\Lambda^\times\setminus\{1\}.
  \end{align} 
  Hence, up to isometries, the four points are characterized uniquely, by an element in $\bbC^\times_\Lambda\setminus\{1\}$,  the shape parameter introduced in \cite[Section 3.1]{Da2}, which can be viewed as a generalized cross-ratio.

  \begin{definition} \label{def:crossrat}Let $y_1,y_2,y_3,y_4$ be four distinct points on $\partial_\infty \Y_\Lambda$ such that any three of them lie on a spacelike plane. Let
   $B\in\PGL^+(2,\bbC_\Lambda)$ be an isometry such that  $B\rhd y_i=v_iv_i^\dag$ for $i=1,2,3$ 
   and $B\rhd y_4$ is parametrized as in \eqref{eq:yparamcp}.
   Then their cross-ratio is 
  \begin{align*}
  \mathrm{cr}(y_1,y_2, y_3,y_4)=\mathrm{cr}(\infty, 0, 1, z)=z\in\bbC_\Lambda^\times\setminus\{1\}.
  \end{align*}
  \end{definition}
  Note that the orbit of the cross-ratio $z=\mathrm{cr}(\infty, 0,1,z)$ under the action of the subgroup \eqref{eq:permutmatrix} of $\PGL^+(2,\bbC_\Lambda)$ permuting $v_1,v_2,v_3$
  is given by
  \begin{align*}
  &z, & &\frac 1 {1-z}, & &\frac{z-1} z, & &\frac 1 z, & &1-z,& &\frac z {z-1}.
  \end{align*}
  These are the familiar expressions for the transformation of a cross-ratio in $\bbC\rmP^1$ under the subgroup of M\"obius transformations that permute $\infty, 0,1$.
  Indeed, for  $\Lambda=1$, any point $y\in \bbC\rmP^1$ can be parametrized as in \eqref{eq:yparamcp} and the cross-ratio coincides with the usual cross-ratio on $\bbC\rmP^1$ defined by
  \begin{align}\label{eq:crossdef}
  \mathrm{cr}(z_1,z_2,z_3,z_4)=\frac{(z_3-z_1)(z_4-z_2)}{(z_3-z_2)(z_4-z_1)}.
  \end{align}
  This is a consequence of formula \eqref{eq:mts} for the $\PGL(2,\bbC)$-action on $\bbC\rmP^1$ and the invariance of the cross-ratio under isometries.
  Note, however, that for $\Lambda=0$ and $\Lambda=-1$ the cross-ratio cannot defined globally by  \eqref{eq:crossdef}, since 
  $z_3-z_2$ or $z_4-z_1$ need not be  units in $\bbC_\Lambda$.
  
  We remark that cross-ratios for $\Lambda=-1$ can be viewed as a pair of real cross-ratios on $\R P^1$
  \begin{align*}
    \mathrm{cr}(z) = \tfrac{1+\ell}{2}\mathrm{cr}(u) +\tfrac{1-\ell}{2}\mathrm{cr}(v)
    &&\text{ for }
    z =(z_1,z_2,z_3,z_4)= \tfrac{1+\ell}{2}u+\tfrac{1-\ell}{2}v.
  \end{align*}
For $\Lambda=0$, we have a real cross-ratio on $\R P^1$ together with an infinitesimal cross-ratio 
  \begin{align*}
    \mathrm{cr}(z) = \mathrm{cr}(x) + \ell d_x\mathrm{cr}(y)
    &&
    \text{ for }
    z =(z_1,z_2,z_3,z_4)= x + \ell y.
  \end{align*}

  \section{Lightlike and ideal tetrahedra}
  
  In this section we investigate the geometric properties of tetrahedra with lightlike faces in $\X_\Lambda$ and their duals in $\Y_\Lambda$. We then show that the latter are precisely the generalized ideal tetrahedra introduced by Danciger in \cite{Da2}.  
  
In the following, we denote by $x_i$ and $y_i$ the vertices of tetrahedra in $\X_\Lambda$ and $\Y_\Lambda$, respectively, and by $x_{ij}$ or $y_{ij}$ the geodesic through the vertices $x_i,x_j$ or $y_i,y_j$. In both cases, we write $e_{ij}$ for the edge of the tetrahedron through the vertices $x_i, x_j$ or $y_i,y_j$, the geodesic segment of $x_{ij}$ or $y_{ij}$ that is part of the tetrahedron.

  \subsection{Lightlike tetrahedra} 
  
  We start by considering tetrahedra in $\X_\Lambda$ whose faces are all contained in lightlike planes. We will also require that these tetrahedra are  (i) convex, i.~e.~obtained as projections of convex cones in $\R^4$, (ii) non-degenerate, i.~e.~not contained in a single geodesic plane, and (iii) that their \emph{internal} geodesics at each vertex, the geodesics that intersect the interior of the tetrahedron, are all spacelike. The last condition is relevant mainly for $\Lambda=1$.
  
  \medskip
  \begin{definition}\label{def:lightlike}
  A lightlike tetrahedron in $\X_\Lambda$ is a non-degenerate convex geodesic 3-simplex in $\X_\Lambda$ with lightlike faces such that all internal geodesics starting at its vertices are spacelike.
  \end{definition}
  
  Note that this definition implies with Lemma \ref{lem:lplaneint} that all edges of a lightlike tetrahedron are spacelike geodesic segments. The two faces containing an edge of a lightlike tetrahedron then lie on the two unique lightlike planes that intersect along this spacelike geodesic. Each vertex is the unique intersection point of the three lightlike planes containing the adjacent faces. 
  
  By applying isometries we can relate any lightlike tetrahedron to one in standard position. By this, we mean a lightlike tetrahedron with one of its vertices at $x=\mathbb 1$ and the three lightlike normal vectors at this vertex given as in Lemma \ref{cor:planeintersect}. The vertices of the
  lightlike tetrahedron can then characterized uniquely by its fourth lightlike normal vector, up to rescaling, and hence by a pair of real parameters.

  \begin{proposition}\label{prop:ltetrahedron}
  Let $L$ be a lightlike tetrahedron in $\X_\Lambda$ with vertices $x_1, x_2, x_3, x_4$. Then there is a unique isometry $A\in \PGL^+(2,\bbC_\Lambda)$ and parameters $\alpha,\beta,\gamma\in\R$ with $\alpha+\beta+\gamma=0$, such that
  \begin{align}
  &A\rhd x_1=\left(\begin{matrix} e^{\ell\alpha} & -2\ell s_\Lambda(\alpha) \cr 0 & e^{-\ell\alpha}\end{matrix}\right), 
  & &A\rhd x_2=\left(\begin{matrix} e^{\ell\beta} & 0 \cr 2\ell s_\Lambda(\beta) &  e^{-\ell\beta} \end{matrix}\right),
  \cr
  &A\rhd x_3=\left(\begin{matrix} e^{-\ell\gamma} & 0 \cr 0& e^{\ell\gamma} \end{matrix}\right), 
  & &A\rhd x_4=\left(\begin{matrix} 1 & 0 \cr 0& 1\end{matrix}\right). \label{eq:ltet}
  \end{align}
  For $\Lambda=1$ one can choose $0<|\alpha|,|\beta|,|\gamma|<\pi$.
  \end{proposition}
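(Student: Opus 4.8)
The plan is to reduce everything to normal vectors of lightlike planes, exploiting that a lightlike tetrahedron is determined by its four faces, each of which is a lightlike plane, hence by its four lightlike normal vectors (up to rescaling) at the vertices. First I would pick the vertex $x_4$, bring it to $\mathbb 1$ by an isometry, and observe that the three faces meeting at $x_4$ are three pairwise-intersecting lightlike planes, so by Corollary \ref{cor:planeintersect} I can further normalize the isometry so that these three faces have normal vectors $N_1,N_2,N_3$ at $\mathbb 1$ exactly as in that corollary; this fixes the isometry up to the finite group of symmetries permuting the three faces. What remains is the fourth face, which is opposite $x_4$ and is some lightlike plane $P$; the three vertices $x_1,x_2,x_3$ are then recovered as the pairwise intersections $P\cap P_i$ (where $P_i$ is the plane dual to $N_i$ through $\mathbb 1$) via Lemma \ref{lem:planeinter} and Lemma \ref{lem:lplaneint}, together with the incidences that $x_i\in P_j$ for $j\neq i$ among the three fixed planes. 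So the tetrahedron is determined by the one remaining lightlike normal vector $N_4$, up to scaling, i.e.\ by a point of $\partial\bbH^2\cong\RP^1$ minus the three already-used points; this is a one-real-parameter family — but since $x_i$ lies on two of the three fixed planes and one must also record where along the resulting spacelike geodesic the vertex sits, the honest count is two real parameters. I would make this concrete by noting $x_1$ lies on the spacelike geodesic $P_1\cap P_4$ through $\mathbb 1$-normalized data, parametrize it by its arc length, and similarly for $x_2$, $x_3$; this produces parameters $\alpha,\beta,\gamma$.

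The computational heart is then to write down $x_1,x_2,x_3$ explicitly. From Lemma \ref{lem:lplaneint}, with $N_1,N_2$ as in \eqref{eq:normalvec}, the intersection $P_1\cap P_2$ is the spacelike geodesic $t\mapsto\exp(t X_{12})$ with $X_{12}=\ell\left(\begin{smallmatrix}1&0\\0&-1\end{smallmatrix}\right)$, and by Proposition \ref{prop:geodesics} / the formula $\exp(tX)=c_{\Lambda\sigma(X)}(t)\mathbb 1+s_{\Lambda\sigma(X)}(t)X$ together with $\sigma(X_{12})=1$, a point on it is $c_\Lambda(t)\mathbb 1+s_\Lambda(t)X_{12}=\left(\begin{smallmatrix}c_\Lambda(t)+\ell s_\Lambda(t)&0\\0&c_\Lambda(t)-\ell s_\Lambda(t)\end{smallmatrix}\right)=\left(\begin{smallmatrix}e^{\ell t}&0\\0&e^{-\ell t}\end{smallmatrix}\right)$, using \eqref{eq:expellsincos}. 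This already matches $A\rhd x_3$ with $t=-\gamma$. For $x_1$ I would use the geodesic $P_1\cap P_4$: its spacelike direction $X$ satisfies $\langle X,N_1\rangle_{\x_\Lambda}=0$ and $X$ passes through $\mathbb 1$, and by the same exponential formula one gets an upper-triangular matrix of the shape $\left(\begin{smallmatrix}e^{\ell\alpha}&\ast\\0&e^{-\ell\alpha}\end{smallmatrix}\right)$; the off-diagonal entry is forced to be $-2\ell s_\Lambda(\alpha)$ by the constraint that the point also lie on the plane $P_4$ and by the bilinear-form computation \eqref{eq:detmetric} (this is the same kind of computation that produced $N_{13},N_{23}$ in the proof of Lemma \ref{lem:planeinter}). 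Similarly $x_2$ is lower-triangular of the form $\left(\begin{smallmatrix}e^{\ell\beta}&0\\2\ell s_\Lambda(\beta)&e^{-\ell\beta}\end{smallmatrix}\right)$. Then I would impose the single relation coming from convexity and the fact that $x_1,x_2,x_3$ all lie on the fourth lightlike plane $P_4$: computing the normal vector $N_4$ of the plane through $x_1,x_2$ (via $\langle N_4,\cdot\rangle$ vanishing on the appropriate tangent vectors) and requiring $x_3\in P_4$ yields $\alpha+\beta+\gamma=0$; this is the direct analogue of the relation $at_1=bt_2$ extracted in the proof of Lemma \ref{lem:planeinter}.

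Finally I would address uniqueness and the range restriction for $\Lambda=1$. Uniqueness of $A$ follows because Corollary \ref{cor:planeintersect} pins $A$ down up to the order-six group permuting the three faces at $x_4$, and choosing the labelling of $x_1,x_2,x_3$ (equivalently ordering $\alpha,\beta,\gamma$) removes that freedom; one checks the six permutations act on $(\alpha,\beta,\gamma)$ by the $S_3$-action, consistent with $\alpha+\beta+\gamma=0$. For $\Lambda=1$, the extra hypothesis in the definition of a lightlike tetrahedron — that the internal geodesics at each vertex are spacelike — must be translated into the statement that the geodesic segments realizing the edges have length less than the period $\pi$ of the closed spacelike geodesics in $\dS^3$ (recall from after Proposition \ref{prop:geodesics} that spacelike geodesics in $\X_1$ are closed); I would argue that convexity plus non-degeneracy forces $0<|\alpha|,|\beta|,|\gamma|<\pi$, so the representatives in \eqref{eq:ltet} are the correct ones (and in particular $e^{\ell\alpha}=\cos\alpha+\im\sin\alpha$ stays on the correct branch). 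The main obstacle I anticipate is not any single computation but organizing the bookkeeping of which vertex lies on which of the four faces and extracting the relation $\alpha+\beta+\gamma=0$ cleanly; the cross-ratio/angle computations in the proof of Lemma \ref{lem:planeinter} give a template, and I would lean on them heavily rather than recomputing from scratch. A secondary subtlety is making sure the signs and the factors of $2$ in the off-diagonal entries $\mp2\ell s_\Lambda(\cdot)$ come out consistently with the $\circ$-hermiticity condition $x^\circ=x$ defining $\X_\Lambda$ in \eqref{eq:xmatpar}; that is a finite check on the explicit matrices in \eqref{eq:ltet}.
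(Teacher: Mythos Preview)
Your outline is essentially the paper's proof: normalize $x_4=\mathbb 1$ and the three incident normals via Corollary~\ref{cor:planeintersect}, place each remaining vertex on the spacelike edge geodesic through $\mathbb 1$ determined by orthogonality to the appropriate pair of normals, exponentiate to get the explicit matrices, and then read off the relation $\alpha+\beta+\gamma=0$ from lightlikeness of the fourth face.

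Two bookkeeping points need correction before this runs cleanly. First, your face labelling is scrambled: if $P_j$ denotes the face opposite $x_j$ (so that $x_i\in P_j$ for $j\neq i$, as you write), then the edge $x_4x_1$ lies in $P_2\cap P_3$, not in ``$P_1\cap P_4$'' --- neither of the latter planes contains both endpoints. Consequently the spacelike direction $X_{41}$ is characterized by $\langle X_{41},N_{42}\rangle=\langle X_{41},N_{43}\rangle=0$, which already forces $X_{41}=\ell\left(\begin{smallmatrix}1&-2\\0&-1\end{smallmatrix}\right)$ and hence the off-diagonal entry $-2\ell s_\Lambda(\alpha)$ in $\exp(\alpha X_{41})$. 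That entry is \emph{not} determined by membership in the fourth plane, contrary to what you wrote. Second, your description of how the relation emerges is off: three points $x_1,x_2,x_3$ always span a plane, so ``requiring $x_3\in P_4$'' is vacuous. The genuine constraint is that this plane be \emph{lightlike}. The paper implements this by computing the normal $N_{i4}$ of the fourth face at each $x_i$ (from orthogonality to the two edge directions $X_{ij},X_{ik}$ there) and imposing $\langle N_{i4},N_{i4}\rangle_{\x_\Lambda}=0$; this is what yields $\alpha+\beta+\gamma=0$ (mod~$\pi$ when $\Lambda=1$). Once you straighten out these two attributions, your computation will match the paper line for line.
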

  
  \begin{proof} 
  Let $A_i\in \PGL^+(2,\bbC_\Lambda)$ an isometry with $A_i^\circ=A_i$ and $A_i\rhd\mathbb{1}=x_i$, as in Lemma \ref{lem:unitpt}.
  Denote by $A_i\rhd N_{ij}$ the normal vector of the face $f_j$ at the vertex  $x_i$ from Proposition \ref{prop:geodplane}.
  Then by Lemma \ref{cor:planeintersect} we can assume that $x_4=\mathbb{1}$ and 
  \begin{align}\label{eq:nexpr}
  N_{41}=\ell\left(\begin{matrix} 0 & 0 \cr 1 & 0 \end{matrix}\right),
  \qquad
  N_{42}=\ell\left(\begin{matrix} 0 & -1 \cr 0 & 0 \end{matrix}\right),
  \qquad
  N_{43}=\ell\left(\begin{matrix} 1 & -1 \cr 1 & -1 \end{matrix}\right).
  \end{align}
  Denote by $x_{ij}$  a spacelike geodesic through $x_i$ and $x_j$ with $x_{ij}(0)=x_i$. Then, by Proposition \ref{prop:geodesics}, the geodesic  $x_{ij}$ can be parametrized as
  \begin{align}\label{eq:xijparam}
  x_{ij}(t)=A_i\rhd\exp(t X_{ij}),
  \end{align}
  where $X_{ij}\in\x_\Lambda$ is a spacelike unit vector, unique up to a sign, that is orthogonal to both $N_{ik}$ and $N_{il}$ with respect to the bilinear form \eqref{eq:detmetric}  for distinct $i,j,k,l\in\{1,2,3,4\}$. By Lemma \ref{lem:expara} the remaining vertices can be expressed as
  \begin{align}\label{eq:xexps}
  x_i=A_i\rhd\mathbb 1=\exp(\alpha_i X_{4i}),
  \end{align}
  where $i=1,2,3$, $\alpha_i\in\R$.
  
  With  \eqref{eq:nexpr}  and expression \eqref{eq:detmetric} for the bilinear form on $\x_\Lambda$, one computes
  \begin{align}\label{eq:xijsimple}
  X_{41}
  =\ell\left(\begin{matrix} 1 & -2 \cr 0 & -1\end{matrix}\right),
  \qquad
  X_{42}
  =\ell\left(\begin{matrix} 1 & 0 \cr 2 & -1\end{matrix}\right),
  \qquad
  X_{43}
  =\ell\left(\begin{matrix} -1 & 0 \cr 0 & 1\end{matrix}\right).
  \end{align}
  Inserting these matrices in formula \eqref{eq:xexps} and computing the exponential with formula \eqref{eq:xpar}, one finds that $x_1,x_2,x_3$ are indeed given by the matrices in \eqref{eq:ltet}, if $\alpha=\alpha_1$, $\beta=\alpha_2$ and the parameters $\alpha_i$ satisfying $\alpha_1+\alpha_2+\alpha_3=0$ ($\bmod{\pi}$ for $\Lambda=1$).
  
  To obtain the relation between these parameters we now compute the remaining vectors $X_{ij}$ and $N_{ij}$. For the former, note that  \eqref{eq:xijparam} and \eqref{eq:xexps} imply
  \begin{align*}
  \exp(t_{ij} X_{ij}) =\exp(-\tfrac{\alpha_i} 2 X_{4i})\cdot \exp(\alpha_j X_{4j}) \cdot \exp(-\tfrac{\alpha_i} 2 X_{4i}),
  \end{align*}
  where $t_{ij}\in\R$ is given by the condition $x_j=x_{ij}(t_{ij})$. Using this identity with expression \eqref{eq:xpar} for the exponential and the identities
   \begin{align}\label{eq:xijtriple}
   \Im(X_{4i})\Im(X_{4j}) \Im(X_{4i})=-2 \Im(X_{4i})- \Im(X_{4j}),
   \end{align} which follow from \eqref{eq:xijsimple}, one obtains 
  \begin{align}\label{eq:xijhelp}
  X_{ij}=X_{4i}-\frac{s_\Lambda(\alpha_j)}{s_\Lambda(\alpha_i+\alpha_j)}(X_{4i}+X_{4j}),\qquad\qquad t_{ij}=-\alpha_i-\alpha_j,
  \end{align}
   for all distinct $i,j\in\{1,2,3\}$. Using again relation  \eqref{eq:xijtriple}, expression \eqref{eq:xexps} for the matrices $A_i$ and the parametrization \eqref{eq:xpar} of the matrix exponential implies for all distinct $i,j\in \{1,2,3,4\}$
  \begin{align}\label{eq:relationij}
  X_{ji}=-A_jA_i^\inv X_{ij}A_iA_j^\inv.
  \end{align}
  The matrices $N_{ij}\in\x_\Lambda$ can then be computed from the condition that $N_{ij}$ is orthogonal to $X_{ik}$ for all distinct $i,j,k$, and normalized such that $\langle N_{ij},X_{ij}\rangle_{\x_\Lambda}=-1$. Note that this last condition is also satisfied by the matrices $N_{4i}$ and $X_{4i}$ from \eqref{eq:nexpr} and \eqref{eq:xijsimple}. A direct computation with expression \eqref{eq:detmetric} for the bilinear form on $\x_\Lambda$ shows that $\langle X_{4i}, X_{4i}\rangle_{\x_\Lambda}=1$ and $\langle X_{4i}, X_{4j}\rangle_{\x_\Lambda}=-1$ for distinct $i,j\in\{1,2,3\}$. Equations \eqref{eq:xexps} and \eqref{eq:relationij} imply $X_{4i}=-X_{i4}$. Together with \eqref{eq:xijhelp}, these identities imply that
  \begin{align}\label{eq:nmat}
  &N_{ij}=-\frac{s_\Lambda(\alpha_i+\alpha_j)}{s_\Lambda(\alpha_j)}N_{4j},
  \cr
  &N_{i4}=X_{4i}-\frac{s_\Lambda(\alpha_i+\alpha_j)}{s_\Lambda(\alpha_j)}N_{4j}-\frac{s_\Lambda(\alpha_i+\alpha_k)}{s_\Lambda(\alpha_k)}N_{4k},
  \end{align}
  for all distinct $i,j,k\in\{1,2,3\}$.  A short computation using \eqref{eq:xijsimple} and  \eqref{eq:detmetric}  finally shows that they are all lightlike 
  if and only if $\alpha_1+\alpha_2+\alpha_3=0$ ($\bmod\pi$ for $\Lambda=1$).
  \end{proof}
  
  By applying isometries to a lightlike tetrahedron in $\X_\Lambda$, we may assume that its vertices are in the standard position given in Proposition \ref{prop:ltetrahedron}. Then, the group of isometries which fixes the vertex $x_4=\mathbb 1$ and permutes the  lightlike planes intersecting at this vertex is precisely the subgroup of $\PGL^+(2,\bbC_\Lambda)$ that permutes the reference points
  $v_1=\infty,v_2=0,v_3=1\in\bbC_\Lambda\rmP^1$ in \eqref{eq:refvecs}.

  \begin{corollary}
  For a lightlike tetrahedron with vertices as in Proposition \ref{prop:ltetrahedron} the isometry $T$ in \eqref{eq:permutmatrix} fixes $A\rhd x_4$ and cyclically permutes the lightlike vectors $N_{41},N_{42},N_{43}$ in \eqref{eq:nexpr} and the spacelike vectors $X_{41},X_{42},X_{43}$ in \eqref{eq:xijsimple}. The isometry $I$ in \eqref{eq:permutmatrix} fixes $A\rhd x_4$, $N_{43}$ and $X_{43}$, exchanges $N_{41}$ and $N_{42}$ and $X_{41}$ and $X_{42}$ and changes the signs of  $N_{41}$, $N_{42}$, $N_{43}$ and $X_{41}$, $X_{42}$ and $X_{43}$.
  \end{corollary}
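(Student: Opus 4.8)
The plan is to reduce the statement to the fact that $T$ and $I$ stabilize $\mathbb 1$, followed by a short explicit matrix computation. First I would check that both generators of the permutation group lie in $\Stab(\mathbb 1,\X_\Lambda)$: since $\det T=1$ and $\det I=-1$, a one-line computation gives $TT^\circ=\mathbb 1$ and $II^\circ=-\mathbb 1$, and as $-\mathbb 1$ and $\mathbb 1$ represent the same class in $\PGL^+(2,\bbC_\Lambda)$, both satisfy $U^\circ=U^\inv$ and hence fix $\mathbb 1=A\rhd x_4$ when the tetrahedron is placed in the standard position of Proposition \ref{prop:ltetrahedron}. This is the first assertion, and the same computation records $T^\inv=T^\circ$ and $I^\inv=I$, which is all that is needed below.

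Because $T,I\in\Stab(\mathbb 1,\X_\Lambda)$, Section \ref{sec:tangent} shows that their action on $T_{\mathbb 1}\X_\Lambda=\x_\Lambda$ is by conjugation, $U\rhd Z=UZU^\inv$, and preserves the bilinear form \eqref{eq:detmetric} and hence the causal type. Thus conjugation with $T$ or $I$ sends the lightlike normal vectors $N_{41},N_{42},N_{43}$ of \eqref{eq:nexpr} to lightlike normal vectors of faces at $A\rhd x_4$ and the spacelike vectors $X_{41},X_{42},X_{43}$ of \eqref{eq:xijsimple} to spacelike vectors, so only their identification within \eqref{eq:nexpr} and \eqref{eq:xijsimple} remains to be pinned down.

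This is done by the twelve elementary conjugations $T N_{4i}T^\inv$, $T X_{4i}T^\inv$, $I N_{4i}I^\inv$, $I X_{4i}I^\inv$ for $i=1,2,3$, i.e.\ routine products of the explicit $2\times2$ matrices in \eqref{eq:permutmatrix}, \eqref{eq:nexpr} and \eqref{eq:xijsimple}. They give the cyclic pattern $T\rhd N_{41}=N_{43}$, $T\rhd N_{43}=N_{42}$, $T\rhd N_{42}=N_{41}$ with the identical cycle on $X_{41},X_{42},X_{43}$, and $I\rhd N_{41}=-N_{42}$, $I\rhd N_{42}=-N_{41}$, $I\rhd N_{43}=-N_{43}$ with the identical behavior on the $X_{4i}$, which is exactly the claimed permutation and sign pattern.

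There is no real obstacle here beyond the bookkeeping. The one subtlety worth noting is that the scalings—in particular the signs occurring for $I$—cannot be read off the invariant bilinear form on $\x_\Lambda$ alone, since the $N_{4i}$ are lightlike; so although the permutations up to scale also follow abstractly from the $\PGL^+(2,\bbC_\Lambda)$-equivariance of the duality between lightlike planes in $\X_\Lambda$ and points of $\partial_\infty\Y_\Lambda$, fixing the scalars genuinely requires the explicit conjugations.
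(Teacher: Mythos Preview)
Your proof is correct and is precisely the direct matrix verification the paper has in mind; the corollary is stated in the paper without proof, as an immediate consequence of the explicit formulas \eqref{eq:permutmatrix}, \eqref{eq:nexpr}, \eqref{eq:xijsimple} and the conjugation action of $\Stab(\mathbb 1,\X_\Lambda)$ on $\x_\Lambda$. Your observation that the signs for $I$ cannot be recovered from the bilinear form alone (since the $N_{4i}$ are lightlike) is a nice point and justifies why the explicit conjugations, rather than an abstract symmetry argument, are genuinely needed.
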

  
  Using these symmetries we may always choose two of the parameters $\alpha,\beta,\gamma$ in Proposition \ref{prop:ltetrahedron} to be positive. For $\Lambda=1$, due to periodicity of spacelike geodesics, we can further choose $0<|\alpha|,|\beta|,|\gamma|<\pi$. The description of $\X_\Lambda$ as a projective quadric in $\RP^3$ then shows that the vertices in Proposition \ref{prop:ltetrahedron} always define a lightlike tetrahedron. It also gives rise to an explicit parametrization of lightlike tetrahedra.

  \begin{proposition} \label{prop:tetpara}
  The vertices in Proposition \ref{prop:ltetrahedron} define a lightlike tetrahedron in $\X_\Lambda$ if and only if $\alpha+\beta+\gamma=0$, with $0<|\alpha|,|\beta|,|\gamma|<\pi$ if $\Lambda=1$. Up to isometries, any lightlike tetrahedron $L\subset\X_\Lambda$ admits a global parametrization
  \begin{align}\label{eq:lighttetparnew}
L=\Big\{x=\exp(r \hat X(A,B)) \mid 0\leq r\leq r(A,B)\leq \pi,\; 0\leq A,B, 1-A-B \Big\},
  \end{align}
  with
  \begin{align*}
  &X(A,B)=\ell \left(\begin{matrix} 1 & -2A \\ 2B & -1\end{matrix}\right),
  \qquad \qquad
   r(A,B)=ct_\Lambda^\inv\Bigg(\frac{\frac A {t_\Lambda(\alpha)}+ \frac B {t_\Lambda(\beta)}+\frac {A+B-1} {t_\Lambda(\gamma)}}{|X(A,B)|}\Bigg),
  \end{align*}
  where $\alpha,\beta>0$, with $\alpha+\beta<\pi$ if $\Lambda=1$.
  \end{proposition}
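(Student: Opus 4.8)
\emph{Approach.} I would treat the two assertions of the proposition separately. For the ``only if'' part, I would note that it is essentially already contained in Proposition~\ref{prop:ltetrahedron}: if the points \eqref{eq:ltet} bound a lightlike tetrahedron, then its face $f_4$ through $x_1,x_2,x_3$ lies in a lightlike plane, so the normal vectors $N_{i4}$ computed in \eqref{eq:nmat} --- an identity that uses only \eqref{eq:ltet}, not the constraint --- must be lightlike, which happens precisely when $\alpha+\beta+\gamma=0$; for $\Lambda=1$ the extra restriction $0<|\alpha|,|\beta|,|\gamma|<\pi$ would then be extracted from non-degeneracy together with the $\pi$-periodicity of spacelike geodesics and the symmetries of the corollary preceding the proposition. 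The real work is the ``if'' direction and the parametrization, which I would carry out in the projective model $\RP^3$.

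For the ``if'' direction, I would fix $\alpha,\beta,\gamma$ with $\alpha+\beta+\gamma=0$ (and $0<|\alpha|,|\beta|,|\gamma|<\pi$ if $\Lambda=1$) and transport the matrices \eqref{eq:ltet} to $\R^4$ via $\phi_X^{\inv}$, obtaining $v_1=(s_\Lambda(\alpha),c_\Lambda(\alpha),-s_\Lambda(\alpha),s_\Lambda(\alpha))$, $v_2=(s_\Lambda(\beta),c_\Lambda(\beta),s_\Lambda(\beta),s_\Lambda(\beta))$, $v_3=(0,c_\Lambda(\gamma),0,-s_\Lambda(\gamma))$, $v_4=(0,1,0,0)$. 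These are linearly independent, since $\det[v_1\,v_2\,v_3\,v_4]=\pm 2\,s_\Lambda(\alpha)s_\Lambda(\beta)s_\Lambda(\gamma)\neq 0$, so the four points span a non-degenerate geodesic $3$-simplex. To see that its projectivised cone lies in $\X_\Lambda$, I would compute, using \eqref{eq:trigids} and $\alpha+\beta+\gamma=0$, that with $M_i=\phi_X(v_i)$ the vertices \eqref{eq:ltet},
\[
\det\Bigl(\sum\nolimits_i\lambda_iM_i\Bigr)=\sum\nolimits_i\lambda_i^2+2(\lambda_1\lambda_2+\lambda_3\lambda_4)c_\Lambda(\gamma)+2(\lambda_1\lambda_3+\lambda_2\lambda_4)c_\Lambda(\beta)+2(\lambda_1\lambda_4+\lambda_2\lambda_3)c_\Lambda(\alpha),
\]
and check that this is positive on $\R_{\geq 0}^{4}\setminus\{0\}$ --- immediately for $\Lambda\le 0$, where $c_\Lambda\ge 1$, and for $\Lambda=1$ after the substitution $s=\lambda_1+\lambda_2$, $t=\lambda_3+\lambda_4$, $p=\lambda_1-\lambda_2$, $q=\lambda_3-\lambda_4$, which splits the form into an $(s,t)$-part (positive for $s,t\ge 0$ because $\cos\alpha+\cos\beta>0$) and a $(p,q)$-part (positive definite because $1-\cos(\alpha+\beta)>|\cos\alpha-\cos\beta|$), both inequalities being elementary for $0<\alpha,\beta$ with $\alpha+\beta<\pi$. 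The four faces are the planes $P_i$: the three through $x_4=\mathbb 1$ are lightlike with the manifestly lightlike normals \eqref{eq:nexpr}, and $P_4$ is lightlike exactly because $\alpha+\beta+\gamma=0$, by \eqref{eq:nmat}. Lemma~\ref{lem:lplaneint} then makes all edges spacelike, and the internal geodesics at the vertices spacelike as well: at $x_4$ these have directions $\hat X(A,B)$ with $A,B>0$, $A+B<1$, and $\langle X(A,B),X(A,B)\rangle_{\x_\Lambda}=1-4AB>0$ by AM--GM, the other vertices being handled by the same computation.

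For the parametrization, I would first put $\Theta$ in the standard position of Proposition~\ref{prop:ltetrahedron}, using the symmetries to arrange $\alpha,\beta>0$, $\gamma=-\alpha-\beta$, with $\alpha+\beta<\pi$ if $\Lambda=1$. Since $\Theta$ is convex, it is the union of the spacelike geodesic segments from the vertex $x_4=\mathbb 1$ to the points of the opposite (lightlike) face $f_4\subset P_4$. The three edges at $x_4$ run to $x_1,x_2,x_3$ with forward unit directions $X_{41}=X(1,0)$ and $X_{42}=X(0,1)$ (from \eqref{eq:xijsimple}, as $x_1=\exp(\alpha X_{41})$, $x_2=\exp(\beta X_{42})$ with $\alpha,\beta>0$) and $-X_{43}=X(0,0)$ (as $x_3=\exp(\gamma X_{43})$ with $\gamma<0$); so the solid angle of $\Theta$ at $x_4$ is the cone they span, and every ray in it is represented by a unique $X(A,B)=A\,X(1,0)+B\,X(0,1)+(1-A-B)X(0,0)$ with $A,B\ge 0$, $A+B\le 1$, whose unit direction is $\hat X(A,B)=X(A,B)/|X(A,B)|$, $|X(A,B)|=\sqrt{1-4AB}$. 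To compute the exit length $r(A,B)$ in that direction, I would observe that the geodesic $x(t)=\exp(t\hat X(A,B))=c_\Lambda(t)\mathbb 1+s_\Lambda(t)\hat X(A,B)$ meets $P_4$ at a unique value of $t$, and work this out in $\R^4$: there $\hat X(A,B)$ corresponds to $(A+B,0,B-A,1)/\sqrt{1-4AB}$, and $P_4=\{[v]\in\X_\Lambda:n\cdot v=0\}$ with $n$ the scale-free solution of $n\cdot v_1=n\cdot v_2=n\cdot v_3=0$, namely $n=\bigl(-\tfrac12(ct_\Lambda(\alpha)+ct_\Lambda(\beta))-ct_\Lambda(\gamma),\,1,\,\tfrac12(ct_\Lambda(\alpha)-ct_\Lambda(\beta)),\,ct_\Lambda(\gamma)\bigr)$, so the condition $n\cdot x(t)=0$ reads
\[
ct_\Lambda(t)=\frac{A\,ct_\Lambda(\alpha)+B\,ct_\Lambda(\beta)+(A+B-1)\,ct_\Lambda(\gamma)}{\sqrt{1-4AB}},
\]
that is $t=r(A,B)$ (for $\Lambda=1$ the convention $ct_\Lambda^{\inv}\in(0,\pi)$ gives $r(A,B)\in(0,\pi)$), and as a sanity check $r(1,0)=\alpha$, $r(0,1)=\beta$, $r(0,0)=-\gamma$. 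Finally, because the faces $f_1,f_2,f_3$ all contain $x_4$ and $\hat X(A,B)$ lies in the three corresponding half-spaces, convexity forces $\Theta\cap x([0,\infty))=\{x(t):0\le t\le r(A,B)\}$; taking the union over the triangle $\{A,B\ge 0,\ A+B\le 1\}$ then gives exactly \eqref{eq:lighttetparnew}.

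I expect the routine but unavoidable parts to be the linear-independence and positivity checks in the ``if'' direction, the explicit solution for $n$, and the verification of the displayed $ct_\Lambda$-identity. The main obstacle, I think, is the last step of the parametrization: arguing rigorously --- in the projective model, and especially when $\Lambda=1$ where spacelike geodesics are closed --- that a ray from the vertex $x_4$ in a direction of the solid angle meets $\Theta$ in a single segment terminating on the opposite face, so that the union over $(A,B)$ reconstructs $\Theta$ precisely.
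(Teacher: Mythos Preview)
Your approach is correct and matches the paper's in spirit: both lift the vertices to $\R^4$, consider the positive cone they span, check it projects into $\X_\Lambda$ via $\det>0$ (your displayed bilinear form in the $\lambda_i$ is exactly the paper's computation, and your $(s,t,p,q)$-splitting for $\Lambda=1$ is a clean way to do what the paper leaves implicit), and then identify that cone with the set $\{\exp(r\hat X(A,B))\}$.

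The one place your route and the paper's differ is precisely the obstacle you flag at the end. You parametrize by shooting geodesic rays from $x_4$ and intersecting with the opposite face $P_4$; this requires a separate argument that each such ray meets $\Theta$ in a single segment ending on $f_4$, which is delicate for $\Lambda=1$. The paper avoids this by going in the other direction: it writes down an explicit change of variables
\[
\lambda a_1=\tfrac{A}{s_\Lambda(\alpha)},\quad \lambda a_2=\tfrac{B}{s_\Lambda(\beta)},\quad \lambda a_3=\tfrac{1-A-B}{s_\Lambda(\alpha+\beta)},\quad \lambda a_4=\tfrac{|X(A,B)|}{t_\Lambda(r)}-\Bigl(\tfrac{A}{t_\Lambda(\alpha)}+\tfrac{B}{t_\Lambda(\beta)}+\tfrac{1-A-B}{t_\Lambda(\alpha+\beta)}\Bigr),
\]
and simply verifies that $\sum a_i x_i'=\lambda\exp(r\hat X(A,B))$. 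Since $(a_1,a_2,a_3,a_4)$ up to $\lambda$ are the barycentric coordinates on the projective simplex $\Theta$, this gives the bijection for free and disposes of the closed-geodesic worry without any extra geometric reasoning. Your computation of the normal $n$ to $P_4$ and the resulting $ct_\Lambda$-equation is equivalent to reading off $r(A,B)$ from $a_4=0$ in these formulas; so if you want to close the gap you identified, the cleanest fix is to record this inverse change of variables rather than argue convexity in the projective model.
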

  
  \begin{proof} 
  Let $x_1,x_2,x_3,x_4\in\X_\Lambda$ be given as in Proposition \ref{prop:ltetrahedron} and choose lifts $x_1',x_2',x_3',x_4'\in\R^4\subset\Mat(2,\bbC_\Lambda)$. Up to isometries, and an overall change of signs of $x'_i$, we can assume
  \begin{align}\label{eq:xprimedef}
  &x_1'=\left(\begin{matrix} e^{\ell\alpha} & -2\ell s_\Lambda(\alpha) \cr 0 & e^{-\ell\alpha}\end{matrix}\right), 
  & &
  x_2'=\left(\begin{matrix} e^{\ell\beta} & 0 \cr 2\ell s_\Lambda(\beta) &  e^{-\ell\beta} \end{matrix}\right),
  \cr
  &x_3'=\left(\begin{matrix} e^{-\ell\gamma} & 0 \cr 0& e^{\ell\gamma} \end{matrix}\right), 
  & &x_4'=\left(\begin{matrix} 1 & 0 \cr 0& 1\end{matrix}\right),
  \end{align}
  with $\alpha,\beta>0$. For $\Lambda=1$ we can further assume $0<\alpha,\beta<\pi$ and $-\pi<\gamma<\pi$.
  
  Consider the convex cone spanned by these lifted vertices
  \begin{align}\label{eq:convcomb}
L'=\Big\{x'=\sum_{i=1}^4 a_ix_i'\mid a_i\geq 0,\;\sum_{i=1}^4 a_i\neq 0\Big\}\subset\R^4.
  \end{align}
  This cone projects to $\X_\Lambda$ if and only if every $x'\in L'$ satisfies $\det(x')>0$ for $\Lambda=-1,1$ and $\tr(x')\neq0$ for $\Lambda=0$. A direct computation shows that this is always satisfied for $\Lambda=-1,0$, without any additional requirements on $\alpha,\beta,\gamma$. For $\Lambda=1$ the condition becomes
  \begin{align*}
  \langle x',x'\rangle=\Big(a_4&+a_i\cos(\alpha_i)+a_j\cos(\alpha_j)+a_k\cos(\alpha_k)\Big)^2
  \cr
  &+\Big(a_i\sin(\alpha_i)-a_j\sin(\alpha_j)-a_k\sin(\alpha_k)\Big)^2-4a_ja_k\sin(\alpha_j)\sin(\alpha_k)>0,
  \end{align*}
  for all $a_l$ as in \eqref{eq:convcomb}, with distinct $i,j,k\in\{1,2,3\}$ and $\alpha_1=\alpha$, $\alpha_2=\beta$ and $\alpha_3=\gamma$.
  
  Note that this imposes restrictions on the possible values of $\alpha,\beta,\gamma$, but does not determine $\gamma$ uniquely as a function of $\alpha,\beta$. The condition that the internal geodesics starting at each vertex are spacelike imposes further restrictions, namely
  \begin{align*}
  \Big(a_is_\Lambda(\alpha_i)-a_js_\Lambda(\alpha_j)-a_ks_\Lambda(\alpha_k)\Big)^2-4a_ja_ks_\Lambda(\alpha_j)s_\Lambda(\alpha_k)>0,
  \cr
  \Big(a_4s_\Lambda(\alpha_i)+a_js_\Lambda(\alpha_i+\alpha_j)+a_ks_\Lambda(\alpha_i+\alpha_k)\Big)^2-4a_ja_ks_\Lambda(\alpha_j)s_\Lambda(\alpha_k)>0,
  \end{align*}
  and these are satisfied for all $a_l$ as in \eqref{eq:convcomb} if and only if $\alpha_1+\alpha_2+\alpha_3=0$.
  
  A global parametrization of the coefficients $a_l$ in \eqref{eq:convcomb} can then be obtained via
  \begin{align*}
  &\lambda a_1=\frac A {s_\Lambda(\alpha)},
  \quad
  \lambda a_2=\frac B {s_\Lambda(\beta)},
  \quad
  \lambda a_3=\frac {1-A-B} {s_\Lambda(\alpha+\beta)},
  \\
  &\lambda a_4=\frac{\sqrt{1-4AB}}{t_\Lambda(r)}-\left(\frac A {t_\Lambda(\alpha)}+ \frac B {t_\Lambda(\beta)}+\frac {1-A-B} {t_\Lambda(\alpha+\beta)}\right),
  \end{align*}
  where $A,B$ and $r$ satisfy the conditions in \eqref{eq:lighttetparnew} and $\lambda\in\R_+$. By comparison with \eqref{eq:lighttetparnew} we find
  \begin{align*}
  x'=\sum_{i=1}^4 a_ix_i'=\lambda \exp\Big(r\hat X(A,B)\Big).
  \end{align*}
  \end{proof}
  
  This proposition gives a geometric interpretation of the parameters $\alpha,\beta,\gamma$ as the edge lengths of the lightlike tetrahedron. The vertices of the tetrahedron are given by $r=0$ and by  $r=r(A,B)$ for  $(A,B)=(0,0)$, $(1,0)$ and $(0,1)$ in the parametrization \eqref{eq:lighttetparnew}.  With the formulas for arc lengths in Proposition \ref{prop:geodesdist} one obtains 
  
  \begin{corollary} \label{cor:edgelength}
  A lightlike tetrahedron $L$ is determined up to isometries by its edge lengths. If $L$ is parametrized as in Proposition \ref{prop:tetpara}, its edge lengths are  $\alpha$, $\beta$ and $\alpha+\beta$, with opposite edges having equal lengths.
  \end{corollary}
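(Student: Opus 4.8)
The plan is to put $\Theta$ into the standard position of Proposition~\ref{prop:ltetrahedron} and read off the arc lengths of its six edges $x_ix_j$ directly from the matrices and vectors assembled in that proof. First I would apply an isometry and, if needed, an element of the order-six symmetry group generated by~\eqref{eq:permutmatrix}, so that the vertices are as in~\eqref{eq:ltet} with $\alpha,\beta>0$, $\gamma=-\alpha-\beta$, and $\alpha+\beta<\pi$ when $\Lambda=1$; set $\alpha_1=\alpha$, $\alpha_2=\beta$, $\alpha_3=\gamma$, $\alpha_4=0$. Recall (Lemma~\ref{lem:lplaneint} and the remark after the definition) that every edge of a lightlike tetrahedron is a spacelike geodesic segment. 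For the three edges meeting $x_4=\mathbb 1$, equation~\eqref{eq:xexps} writes $x_i=\exp(\alpha_iX_{4i})$ with $X_{4i}\in\x_\Lambda$ a \emph{unit} spacelike vector, so by Proposition~\ref{prop:geodesics} the point $x_i$ lies at arc-length parameter $\alpha_i$ on the spacelike geodesic through $x_4$ with velocity $X_{4i}$, whence $d(x_4,x_i)=|\alpha_i|$ and the three lengths $\alpha$, $\beta$, $\alpha+\beta$. For the remaining edges $x_ix_j$ with $i,j\in\{1,2,3\}$, equations~\eqref{eq:xijparam} and~\eqref{eq:xijhelp} give $x_j=x_{ij}(t_{ij})$ with $t_{ij}=-(\alpha_i+\alpha_j)$ along the arc-length-parametrized spacelike geodesic $x_{ij}$, so $d(x_i,x_j)=|\alpha_i+\alpha_j|$; since $\alpha+\gamma=-\beta$ and $\beta+\gamma=-\alpha$, this equals $\alpha+\beta$ for $x_1x_2$, $\beta$ for $x_1x_3$, and $\alpha$ for $x_2x_3$.

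As the alternative route suggested by the statement, I would instead evaluate Proposition~\ref{prop:geodesdist}: the representatives in~\eqref{eq:ltet} all have unit determinant, every edge is spacelike so $\sigma=1$, and $|c_\Lambda(d(x_i,x_j))|=\tfrac12|\tr(\bar x_j\,\bar x_i^\inv)|$; a short matrix product together with the addition formula~\eqref{eq:trigids} yields $\tfrac12|\tr(\bar x_j\,\bar x_i^\inv)|=|c_\Lambda(\alpha_i+\alpha_j)|$ (with $\alpha_4=0$), and the inequalities $0<\alpha,\beta$, $\alpha+\beta<\pi$ pin down the length in the correct range (for $\Lambda=1$, where spacelike geodesics are closed of length $\pi$, lengths are only defined modulo $\pi$). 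Either way, the three pairs $\{x_4x_1,x_2x_3\}$, $\{x_4x_2,x_1x_3\}$, $\{x_4x_3,x_1x_2\}$ are precisely the three pairs of opposite (non-adjacent) edges of the tetrahedron, and the two edges in each pair have equal lengths $\alpha$, $\beta$, $\alpha+\beta$ respectively; this is the second assertion.

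For the first assertion I would argue as follows. The unordered triple of distinct edge-length values of $\Theta$ is $\{\alpha,\beta,\alpha+\beta\}$ with $\alpha,\beta>0$ (all three in $(0,\pi)$ when $\Lambda=1$); since the largest of these equals the sum of the other two, the triple recovers the \emph{unordered} pair $\{\alpha,\beta\}$ — this is still true when $\alpha=\beta$, and for $\Lambda=1$ no wrap-around occurs because $\alpha,\beta,\alpha+\beta$ all lie in $(0,\pi)$. By Proposition~\ref{prop:ltetrahedron} a lightlike tetrahedron is determined up to isometry by the ordered pair $(\alpha,\beta)$, and the transposition $\alpha\leftrightarrow\beta$ is induced by an isometry (the element $I$ of~\eqref{eq:permutmatrix}, which interchanges the two lightlike faces through $x_4$ other than the third), so $\Theta$ is determined up to isometry by $\{\alpha,\beta\}$, hence by its edge lengths. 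Nothing here is genuinely hard once Proposition~\ref{prop:ltetrahedron} is in place, since the arc lengths are immediate from its proof and from Propositions~\ref{prop:geodesics} and~\ref{prop:geodesdist}; the one step I would treat carefully is this last one — verifying that the \emph{unordered} collection of edge lengths really determines $\{\alpha,\beta\}$, that the residual $\alpha\leftrightarrow\beta$ symmetry is absorbed into $\PGL^+(2,\bbC_\Lambda)$, and that the period-$\pi$ ambiguity of spacelike lengths causes no trouble when $\Lambda=1$.
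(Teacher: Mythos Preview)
Your approach is correct and matches the paper's, which simply points to the parametrization of Proposition~\ref{prop:tetpara} and the arc-length formula of Proposition~\ref{prop:geodesdist}; you supply the details the paper leaves implicit, and your two routes (via \eqref{eq:xexps}, \eqref{eq:xijhelp} and via the trace formula) are both valid.

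One small imprecision: the element $I$ alone does not induce the swap $(\alpha,\beta)\mapsto(\beta,\alpha)$ on the standard parameters. As the corollary after Proposition~\ref{prop:ltetrahedron} records, $I$ exchanges $X_{41}\leftrightarrow X_{42}$ \emph{with a sign change}, so it sends $(\alpha,\beta,\gamma)$ to $(-\beta,-\alpha,-\gamma)$; to obtain $(\beta,\alpha,\gamma)$ you must also apply generalized complex conjugation (an orientation-reversing isometry of $\X_\Lambda$). Since the corollary speaks of the full isometry group this is harmless, but you should name the correct isometry.
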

  
  Using the parametrization   in Proposition \ref{prop:tetpara} and the formulas for arc lengths in Proposition \ref{prop:geodesdist},
   we obtain more general expressions for the arc lengths of geodesic segments between points on opposite edges.

  \begin{proposition}\label{prop:geoddltet}  Let $L$ be a lightlike tetrahedron in $\X_\Lambda$ parametrized as in Proposition \ref{prop:tetpara} and with edge geodesics $x_{ij}$  as in  \eqref{eq:xijparam}. Then the arc length $d_{4i,jk}(s,t)$  of a geodesic segment between points $x_{4i}(\frac{\alpha_i}{2}+s)$ and $x_{jk}(\frac{\alpha_i}{2}+t)$ on opposite edges $e_{4i},e_{jk}$ satisfies
  \begin{align}\label{eq:geoddistconc}
  |c_{\sigma\Lambda}(d_{4i,jk}(s,t))|=\Big|\frac{c_\Lambda(s+t)s_\Lambda(\alpha_j)+c_\Lambda(s-t)s_\Lambda(\alpha_k)}{s_\Lambda(\alpha_j+\alpha_k)}\Big|,\qquad \Lambda\neq 0,
  \cr 
  \sigma d_{4i,jk}(s,t)^2=\frac{(s+t)^2\alpha_j+(s-t)^2\alpha_k}{\alpha_j+\alpha_k}-\alpha_j\alpha_k,\qquad \Lambda=0,
  \end{align}
  with $\alpha_1=\alpha$, $\alpha_2=\beta$ and $\alpha_3=\gamma$, $s,t\in (-\frac {|\alpha_i|} 2,\frac {|\alpha_i|} 2)$  and $\sigma=-1,0,+1$ if the geodesic segment between them is timelike, lightlike or spacelike, respectively. 
  \end{proposition}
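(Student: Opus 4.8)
The plan is to reduce the computation to the arc-length formulas already established in Proposition~\ref{prop:geodesdist} by writing down explicit matrix representatives of the two points in question. First I would fix a lightlike tetrahedron $\Theta$ in standard position as in Proposition~\ref{prop:ltetrahedron}, so that the vertices are the matrices \eqref{eq:ltet} and the edge geodesics $x_{4i}$, $x_{jk}$ are parametrized as in \eqref{eq:xijparam} using the vectors $X_{4i}$ from \eqref{eq:xijsimple} and $X_{ij}$ from \eqref{eq:xijhelp}. The key point is that, by \eqref{eq:xijhelp}, the edge geodesic $x_{jk}$ starts at $x_j$ at parameter $0$ and reaches $x_k$ at parameter $t_{jk}=-\alpha_j-\alpha_k=\alpha_i$; shifting the parameter so that the midpoint of the arc corresponds to parameter $0$ is exactly the role of the $\tfrac{\alpha_i}{2}$ offsets in the statement. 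Then $x_{4i}(\tfrac{\alpha_i}{2}+s)$ and $x_{jk}(\tfrac{\alpha_i}{2}+t)$ both have $s,t$ ranging symmetrically in $(-\tfrac{|\alpha_i|}{2},\tfrac{|\alpha_i|}{2})$ and coincide with the corresponding vertices at the endpoints.

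Next I would produce unit-determinant representatives $\bar x_{4i}(\tfrac{\alpha_i}{2}+s)$ and $\bar x_{jk}(\tfrac{\alpha_i}{2}+t)$ of these two points. By \eqref{eq:xpar} each of these is of the form $A\rhd\big(c_\Lambda(\cdot)\mathbb 1+s_\Lambda(\cdot)X\big)$ for the appropriate $A$ and spacelike unit vector $X$; since all $X_{4i}$, $X_{ij}$ have been computed explicitly as $2\times2$ matrices over $\bbC_\Lambda$, multiplying these out is a finite matrix computation. With both representatives in hand, Proposition~\ref{prop:geodesdist} gives $|c_{\sigma\Lambda}(d_{4i,jk}(s,t))|=\tfrac12\big|\tr\big(\bar x_{jk}(\tfrac{\alpha_i}{2}+t)\cdot \bar x_{4i}(\tfrac{\alpha_i}{2}+s)^{\inv}\big)\big|$ for $\Lambda\neq0$, and for $\Lambda=0$ the companion formula $\sigma d^2=-\det\Im(\bar x_{jk}-\bar x_{4i})$ applied to representatives of equal-sign trace. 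The right-hand side of \eqref{eq:geoddistconc} should then drop out after applying the generalized trigonometric addition identities \eqref{eq:trigids}, bookkeeping the relation $\alpha_j+\alpha_k=-\alpha_i$, and for $\Lambda=0$ taking the limit (or doing the analogous polynomial computation) of the $\Lambda\neq0$ expression.

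The main obstacle I anticipate is organizing the matrix arithmetic so that the trace collapses cleanly into the claimed quotient $\big(c_\Lambda(s+t)s_\Lambda(\alpha_j)+c_\Lambda(s-t)s_\Lambda(\alpha_k)\big)/s_\Lambda(\alpha_j+\alpha_k)$ rather than a messier combination. In practice this means choosing the auxiliary isometries $A_i$ and $A_j$ compatibly, so that the product $A_j A_i^{\inv}$ (which governs how the two edges sit relative to each other, cf.\ \eqref{eq:relationij}) has a simple form, and then exploiting the cubic relation \eqref{eq:xijtriple} among the $\Im(X_{4i})$ to reduce high powers. A secondary check is verifying the consistency of the formula under the $S_3$-symmetry of the tetrahedron: the right-hand side of \eqref{eq:geoddistconc} should be invariant under simultaneously swapping $(j,s)\leftrightarrow(k,-s)$ together with $t\leftrightarrow -t$, which it visibly is, and this gives a useful sanity check on the sign conventions. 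The $\Lambda=0$ case carries no real difficulty once the $\Lambda\neq0$ case is done, since it is the first-order Taylor expansion encoded in the dual-number structure of $\bbC_0$.
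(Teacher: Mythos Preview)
Your proposal is correct and matches the paper's approach exactly: the paper does not write out a proof but only states, just before the proposition, that the formulas ``are obtained using the parametrization in Proposition~\ref{prop:tetpara} and the formulas for arc lengths in Proposition~\ref{prop:geodesdist}''. Your plan---take unit-determinant representatives of $x_{4i}(\tfrac{\alpha_i}{2}+s)$ and $x_{jk}(\tfrac{\alpha_i}{2}+t)$ from \eqref{eq:xijparam}, \eqref{eq:xijsimple}, \eqref{eq:xijhelp}, compute the trace (respectively $\det\Im$ of the difference for $\Lambda=0$), and simplify via \eqref{eq:trigids} and $\alpha_j+\alpha_k=-\alpha_i$---is precisely the computation the paper is leaving to the reader, and your identification of the $\tfrac{\alpha_i}{2}$ offsets as midpoints (using $t_{jk}=\alpha_i$ from \eqref{eq:xijhelp}) is the right bookkeeping.
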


  Note that the formulas for $\Lambda=0$ in \eqref{eq:geoddistconc} are obtained from the ones for $\Lambda\neq 0$ by expanding the latter as a power series in $\alpha,\beta$ and $\Lambda$. Expression \eqref{eq:expellsincos}  for the generalized trigonometric functions in terms of the exponential map extends to general $\Lambda=-\ell^2\in\R$  and defines $s_\Lambda$ and $c_\Lambda$  as power series in $\Lambda$. One can thus expand the left- and right-hand side of the equations for $\Lambda\neq 0$ in \eqref{eq:geoddistconc} as a power series in $\Lambda$. To zero-th order in $\Lambda$ these equations are satisfied trivially, and at first order one obtains the equations for $\Lambda=0$.
  
  Proposition \ref{prop:tetpara} and Corollary \ref{cor:edgelength}  show that 
  for all admissible values of the edge lengths $\alpha,\beta$, the lightlike tetrahedron has a distinguished pair of opposite edges, namely its longest edge pair of edge length $\alpha+\beta$.
  Proposition \ref{prop:geoddltet} 
  implies that this edge pair also plays a distinguished role with respect to the causal structure. 
  The longest edge pair is the only pair of opposite edges that are connected by {\em timelike} geodesic segments.

  \begin{corollary}\label{cor:timelikeseg}
  There is a timelike geodesic segment between two opposite edges of a lightlike tetrahedron if and only if these are its longest edges. The arc length of such timelike geodesic segments is maximized at the midpoints of the longest edges.
  \end{corollary}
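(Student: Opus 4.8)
The plan is to deduce the whole statement from the arc-length formulas of Proposition~\ref{prop:geoddltet} by elementary estimates, after putting the tetrahedron into the normal form of Proposition~\ref{prop:tetpara}. As noted after Proposition~\ref{prop:ltetrahedron}, the order-six group of symmetries generated by $T$ and $I$ of \eqref{eq:permutmatrix} permutes the three pairs of opposite edges and flips signs of $\alpha,\beta,\gamma$, so I may assume $\alpha,\beta>0$; then $|\gamma|=\alpha+\beta=\max\{|\alpha|,|\beta|,|\gamma|\}$, and by Corollary~\ref{cor:edgelength} the longest pair of opposite edges is $\{e_{43},e_{12}\}$, the pair carrying the parameter $\alpha_3=\gamma$. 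It remains to show: (a) $e_{43}$ and $e_{12}$ are joined by a timelike segment; (b) neither $\{e_{41},e_{23}\}$ nor $\{e_{42},e_{13}\}$ is; (c) among the timelike segments joining $e_{43}$ and $e_{12}$ the longest joins their midpoints. The key point is that Proposition~\ref{prop:geoddltet} turns the causal type of the connecting segment $d_{4i,jk}(s,t)$ into an inequality on the right-hand sides of \eqref{eq:geoddistconc}: write $F_i(s,t)$ for the first and $G_i(s,t)$ for the second. Since $|c_1(d)|=|\cos d|\le1$ and $|c_{-1}(d)|=|\cosh d|\ge1$, the segment is timelike precisely when $|F_i|>1$ for $\Lambda=1$, when $|F_i|<1$ for $\Lambda=-1$, and when $G_i<0$ for $\Lambda=0$; on its timelike locus the arc length is a strictly monotone function of $|F_i|$ (increasing for $\Lambda=1$, decreasing for $\Lambda=-1$) resp.\ of $-G_i$ (increasing). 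So everything reduces to estimating $F_i,G_i$ on the square $s,t\in(-\tfrac{|\alpha_i|}2,\tfrac{|\alpha_i|}2)$.

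For the longest pair, $i=3$, the two relevant parameters $\alpha_j,\alpha_k$ are $\alpha$ and $\beta$, both positive, with $\alpha_j+\alpha_k=\alpha+\beta<\pi$ when $\Lambda=1$. The addition formulas \eqref{eq:trigids} give $F_3(0,0)=c_\Lambda(\tfrac{\alpha_j-\alpha_k}2)/c_\Lambda(\tfrac{\alpha_j+\alpha_k}2)$ and $G_3(0,0)=-\alpha\beta$; since $0\le|\alpha_j-\alpha_k|<\alpha_j+\alpha_k<\pi$ and $c_\Lambda$ is decreasing on $(0,\pi/2)$ for $\Lambda=1$ and increasing on $(0,\infty)$ for $\Lambda=-1$, one gets $F_3(0,0)>1$ for $\Lambda=1$, $0<F_3(0,0)<1$ for $\Lambda=-1$, and $G_3(0,0)=-\alpha\beta<0$, so the midpoint segment is timelike, which proves (a). For (c), on the square $|s\pm t|<\alpha+\beta$, so $c_\Lambda(s\pm t)\le1$ with equality iff $s\pm t=0$ when $\Lambda=1$ (using $\alpha+\beta<\pi$), and $c_\Lambda(s\pm t)\ge1$ with equality iff $s\pm t=0$ when $\Lambda=-1$; as $s_\Lambda(\alpha_j),s_\Lambda(\alpha_k),s_\Lambda(\alpha_j+\alpha_k)>0$, the quantity $F_3$ is maximised ($\Lambda=1$) resp.\ minimised ($\Lambda=-1$) uniquely at $s=t=0$, and the bound $c_\Lambda(s\pm t)\ge c_\Lambda(\alpha+\beta)>-1$ upgrades this to $|F_3(s,t)|<|F_3(0,0)|$ off the origin for $\Lambda=1$. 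For $\Lambda=0$ one has directly $-G_3(s,t)=\alpha\beta-\tfrac{(s+t)^2\alpha+(s-t)^2\beta}{\alpha+\beta}\le\alpha\beta$ with equality iff $s=t=0$. By the monotonicity recorded above, the timelike arc length is maximised exactly at the two midpoints, which is (c).

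For the remaining pairs it suffices, by the $I$-symmetry, to treat $i=1$; then $\{j,k\}=\{2,3\}$, and taking $\alpha_j=\beta>0$, $\alpha_k=\gamma=-(\alpha+\beta)<0$ gives $\alpha_j+\alpha_k=-\alpha$ and $s,t\in(-\tfrac\alpha2,\tfrac\alpha2)$, whence the crucial bound $|s\pm t|<\alpha<\alpha+\beta=|\gamma|$. For $\Lambda=0$,
\begin{align*}
G_1(s,t)=\frac{(s-t)^2(\alpha+\beta)-(s+t)^2\beta}{\alpha}+\beta(\alpha+\beta)\ \ge\ -\frac{(s+t)^2\beta}{\alpha}+\beta(\alpha+\beta)\ >\ -\alpha\beta+\beta(\alpha+\beta)\ =\ \beta^2\ >\ 0,
\end{align*}
so every such segment is strictly spacelike. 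For $\Lambda=\pm1$ the analogue is a one-line estimate: bounding $c_\Lambda(s\pm t)$ between $c_\Lambda(\alpha)$ and $1$ and simplifying with \eqref{eq:trigids} gives $\cos(\alpha+\beta)<F_1(s,t)<\cos\beta$ when $\Lambda=1$, hence $|F_1|<1$, and $F_1(s,t)>\cosh\beta>1$ when $\Lambda=-1$; in either case no timelike segment occurs. This proves (b) and completes the argument.

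The step I expect to be the main obstacle is the arc-length bookkeeping in (c) for $\Lambda=-1$: timelike geodesics of $\AdS^3$ are closed, so ``the arc length of a timelike segment'' is only defined up to a branch of $d\mapsto|\cos d|$, and one must confirm that the branch realised between two points of the tetrahedron is $d=\arccos|F_3|\in(0,\tfrac\pi2)$ rather than $\pi-\arccos|F_3|$, so that ``$d$ decreasing in $|F_3|$'' really does give ``$d$ maximised at the midpoints'' — either a direct trace computation or an appeal to the conventions of Proposition~\ref{prop:geodesdist} should settle this. The rest is the kind of elementary estimate illustrated above.
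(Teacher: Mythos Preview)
Your proof is correct and follows the same approach as the paper: both deduce everything from Proposition~\ref{prop:geoddltet} by analysing the right-hand side of \eqref{eq:geoddistconc} on the relevant squares. The paper's proof is considerably terser---it merely asserts that $(0,0)$ is the unique critical point, identifies it as a maximum for the longest edge pair and a saddle for the others, and says the conclusion for the short pairs follows ``by investigating the boundary values''---whereas you actually carry out the elementary estimates that verify these claims. Your explicit bounds $\cos(\alpha+\beta)<F_1<\cos\beta$ and $F_1>\cosh\beta$ for the short pairs, and $|F_3(s,t)|<F_3(0,0)$ off the origin, are exactly what the paper's ``inspection'' and ``boundary values'' would have to mean if spelled out.

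The branch issue you flag for $\Lambda=-1$ is a genuine subtlety, but it is not special to your argument: the paper treats $d_{4i,jk}$ as a well-defined function without comment. The resolution is that the segment in question lies in (or near) the compact tetrahedron and varies continuously with $(s,t)$; since at $(0,0)$ one can check directly that the arc length is $\arccos F_3(0,0)\in(0,\tfrac\pi2)$, continuity forces this branch throughout the timelike locus, and your monotonicity argument then goes through. So this is not an obstacle, only a point where both proofs are implicitly invoking continuity.
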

  
  \begin{proof}
  The functions $d_{4i,jk}(s,t)$ have a single critical point for $s,t$ in $ (-|\alpha_i|/2,|\alpha_i|/2)$, namely at  $(0,0)$. If one chooses $\alpha_1=\alpha$, $\alpha_2=\beta$ and $\alpha_3=-\alpha-\beta$, with $\alpha,\beta>0$, as in Proposition \ref{prop:tetpara}, the longest edges are $e_{12}$ and $e_{43}$ and $(0,0)$ is a local maximum for $d_{43,12}$. By inspection of the formulas \eqref{eq:geoddistconc}, one finds that 
  $c_{\Lambda\sigma}(d_{43,12}(0,0))>1$ for $\Lambda=1$,  $c_{\Lambda\sigma}(d_{43,12}(0,0))<1$ for $\Lambda=-1$ and $\sigma d_{43,12}(0,0)^2=-\alpha\beta$ for $\Lambda=0$. This shows in all cases that $\sigma=-1$ and hence the geodesic segments between the midpoints of $e_{43}$ and $e_{12}$ are timelike. For $d_{42,13}$ and $d_{41,23}$, the point $(0,0)$ is a saddle point. By investigating the boundary values of these functions, one finds that all geodesics connecting points on $e_{42}$ and $e_{13}$ or points on $e_{41}$ and $e_{23}$ are spacelike.
  \end{proof}

  \begin{remark} Corollary \ref{cor:timelikeseg} shows that for $\Lambda=-1,0$ a lightlike tetrahedron $L$ is the intersection of the past of the geodesic containing one of the two longest edges with the future of the geodesic containing the other.  For $\Lambda=1$, the space $\X_{1}=\mathrm{dS}^3$ is not time orientable, but it still holds that any point in $L$ is connected to each of two longest edges by a timelike geodesic segment in $L$ that ends on a face through the opposite edge. 
  \end{remark}

  Instead of using geodesics through the midpoints of its edges, we
   can also characterize the geometry of a lightlike tetrahedron in terms of lightlike geodesics. For this, we consider lightlike geodesics in the geodesic planes defined by its faces and through one of its vertices. The longest edges of a lightlike tetrahedron are then distinguished by the fact that such lightlike geodesics through their endpoints intersect the opposite face.

  \begin{corollary}\label{cor:lightint}
  Let $L$ be a lightlike tetrahedron in $\X_\Lambda$ with vertices $x_i$ and  $n_{ij}$ the unique lightlike geodesic through $x_i$ in the geodesic plane containing the face opposite $x_j$. 
  
  Then $n_{ij}$ intersects the edge geodesic $x_{kl}$ if and only if $i=k$, $i=l$ or $i,j,k,l\in\{1,2,3,4\}$ are all distinct.
  The intersection points are given by
  \begin{align*}
  &n_{ij}\cap x_{il}=n_{ij}\cap x_{ki}=x_i, & &
  n_{ij}\cap x_{4l}=x_{4l}(-\alpha_i),\\
  &n_{4j}\cap x_{kl}=x_{kl}(-\alpha_k), & & n_{i4}\cap x_{kl}=x_{kl}(-\alpha_l),
  \end{align*}
  where $\alpha_1=\alpha$, $\alpha_2=\beta$, $\alpha_3=\gamma$ and the edge geodesics $x_{ij}$ are parametrized as in \eqref{eq:xijparam}. 
  
  In particular, $n_{ij}$ intersects the tetrahedron $L$ outside $x_i$ if and only if $x_{ij}$ contains one of the longest edges of $L$.
  \end{corollary}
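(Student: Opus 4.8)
The plan is to work with $\Theta$ in the standard position of Proposition~\ref{prop:ltetrahedron}---all objects in the statement being isometry invariant---so that $x_4=\mathbb 1$, $x_i=\exp(\alpha_i X_{4i})$ with $\alpha_1=\alpha$, $\alpha_2=\beta$, $\alpha_3=\gamma$, $A_i=\exp(\tfrac{\alpha_i}{2}X_{4i})$, the edges are $x_{ij}(t)=A_i\rhd\exp(tX_{ij})$ as in~\eqref{eq:xijparam}, and $X_{ij},N_{ij}$ are the explicit matrices of \eqref{eq:xijsimple}--\eqref{eq:xijhelp} and \eqref{eq:nexpr}--\eqref{eq:nmat}. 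First I would identify $n_{ij}$ for $i\neq j$: by Proposition~\ref{prop:geodplane} the plane $P_j$ of the face $f_j$ opposite $x_j$ has normal vector $A_i\rhd N_{ij}$ at $x_i$, and $N_{ij}$ is lightlike (end of the proof of Proposition~\ref{prop:ltetrahedron}); since a Lorentzian $3$-space has no totally null plane, the degenerate plane $N_{ij}^\perp\subset\x_\Lambda$ has $\R N_{ij}$ as its only null line, so by Proposition~\ref{prop:geodesics} with $\sigma(N_{ij})=0$ (where $c_0=1$, $s_0(t)=t$) the unique lightlike geodesic through $x_i$ in $P_j$ is $n_{ij}(t)=A_i\rhd(\mathbb 1+tN_{ij})$.

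Next I would settle the incidence pattern by distinguishing whether $j\in\{k,l\}$. If $j\notin\{k,l\}$, then $x_k,x_l$ are vertices of the triangle $f_j$, so the whole geodesic $x_{kl}$ lies in $P_j$; if in addition $i\in\{k,l\}$, then $x_i$ lies on both $x_{kl}$ and $n_{ij}$, and since a spacelike and a lightlike geodesic are distinct projective lines, $x_i$ is their only intersection point, giving $n_{ij}\cap x_{il}=n_{ij}\cap x_{ki}=x_i$; if $i\notin\{k,l\}$, i.e.\ $i,j,k,l$ are all distinct, the intersection point is exhibited in the next step. If $j\in\{k,l\}$, say $l=j$ and $m:=k\neq j$, then $x_{kl}=x_{mj}$ joins $x_m\in P_j$ to $x_j\notin P_j$ (non-degeneracy), so $x_{kl}\cap P_j=\{x_m\}$ and $n_{ij}$ meets $x_{kl}$ iff $x_m\in n_{ij}$, which for $m\neq i$ would force the edge $x_{im}$ of $f_j$ to be lightlike---impossible, since all edges of a lightlike tetrahedron are spacelike. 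This yields the ``if and only if'' of the first assertion together with the values $x_i$ in the non-generic cases.

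For the all-distinct cases one of $i,j,k,l$ equals $4$, and I would use the $S_3$-symmetry of the standard configuration under simultaneous relabelling of the indices $1,2,3$ and the parameters $\alpha,\beta,\gamma$---realized by the isometries $T,I$ of~\eqref{eq:permutmatrix}, which permute the $N_{4m},X_{4m}$, hence the $n_{ij},x_{kl}$, compatibly---to reduce to one representative of each of the three displayed formulas, e.g.\ $n_{12}\cap x_{43}=x_{43}(-\alpha)$, $n_{41}\cap x_{23}=x_{23}(-\beta)$, $n_{14}\cap x_{23}=x_{23}(-\gamma)$. Each amounts to checking that $A_i^{-1}\rhd(\text{the claimed point})$ is proportional to some $\mathbb 1+t N_{ij}$; here it is useful that $N_{ij}=-\tfrac{s_\Lambda(\alpha_i+\alpha_j)}{s_\Lambda(\alpha_j)}N_{4j}$ by~\eqref{eq:nmat}, so that $n_{ij}=A_i\rhd n_{4j}$ for $i,j\in\{1,2,3\}$, after which \eqref{eq:xijtriple}, the explicit matrices, and the addition formulas~\eqref{eq:trigids} turn this into a short computation with $2\times2$ matrices over $\bbC_\Lambda$, uniform in the sign of $\Lambda$. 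I expect this step to be the main obstacle: it is the only place where one manipulates the generalized-complex matrices directly, and the work lies in choosing the reparametrizations that make the three cases collapse.

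Finally, for the ``in particular'' statement I would use that $\Theta\cap P_j$ equals the closed face $f_j$ (a face of the simplicial convex cone defining $\Theta$) and that $n_{ij}\subset P_j$: hence $n_{ij}$ meets $\Theta$ in a point other than $x_i$ exactly when the line $n_{ij}$ through the vertex $x_i$ of the triangle $f_j$ crosses the closed opposite edge $x_{kl}$, i.e.\ when the intersection parameter computed above lies between the two parameter values of the endpoints of that edge (given by \eqref{eq:xijhelp} and the relation $x_{4m}(\alpha_m)=x_m$). Running through the twelve configurations with $\gamma=-\alpha-\beta<0$ shows that this happens precisely for $\{i,j\}\in\{\{1,2\},\{3,4\}\}$---and by Corollary~\ref{cor:edgelength} these are exactly the pairs for which the geodesic $x_{ij}$ carries one of the two longest edges of $\Theta$---while in every other case the intersection point falls strictly outside the edge, so $n_{ij}\cap\Theta=\{x_i\}$.
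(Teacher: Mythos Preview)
Your proposal is correct and follows essentially the same route as the paper: reduce to standard position, separate the edge geodesics according to whether they contain $x_j$, observe that those not containing $x_j$ lie in $P_j$ and hence meet the lightlike line $n_{ij}$, and compute the intersection parameters by solving $n_{ij}(\theta)=x_{kl}(t)$ using the explicit matrices \eqref{eq:nexpr}, \eqref{eq:nmat}, \eqref{eq:xijsimple}, \eqref{eq:xijhelp}. Your treatment is in fact more explicit than the paper's in two places: you spell out the $S_3$-symmetry reduction (the paper just writes ``by computing the matrices''), and you actually argue the ``in particular'' clause by checking where the computed parameter falls relative to the edge endpoints, which the paper leaves to the reader.
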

  
  \begin{proof}
  If $n_{ij}$ and $x_{kl}$ intersect, then $x_i,x_k,x_l$ lie on a common lightlike plane. Since $n_{ij}$ lies on the lightlike plane opposite $x_j$, the only edge geodesic containing $x_j$ which intersects $n_{ij}$ is $x_{ij}$, with the intersection point given by $x_i$. Furthermore, the edge geodesics $x_{kl}$ opposite to $x_j$ (that is, with $k,l\neq j$) intersect $n_{ij}$ at a single point. This is given by $x_i$, if $k=i$ or $l=i$. For $k,l\neq i$, the intersection point can be computed solving
  \begin{align*}
  n_{ij}(\theta_{ij})=A_i\rhd\exp(\theta_{ij} N_{ij})=A_k\rhd\exp(t_{kl} X_{kl})=x_{kl}(t_{kl})
  \end{align*}
  for $\theta_{ij}$ and $t_{kl}$, where $N_{ij}$ and $X_{kl}$ are given by \eqref{eq:nexpr}, \eqref{eq:nmat}, and \eqref{eq:xijsimple}, \eqref{eq:xijhelp}.
  \end{proof}

   Corollary \ref{cor:lightint} defines canonical projections of each vertex $x_i$ on each of the geodesics $x_{kl}$ containing its opposite edge $e_{kl}$. We will call these \emph{null projections} in the following.  Thus, given a vertex $x_i$, we define the point $\pi_{kl}(x_i)$ on the geodesic $x_{kl}$ as the unique intersection point between $x_{kl}$ and the lightlike geodesic $n_{ij}$, as shown in Figure \ref{fig:orthproj}. It should be emphasized that $\pi_{kl}(x_i)$ may lie outside of the corresponding edge $e_{kl}$.
  
  \begin{figure}[t]
  \begin{center}
  \begin{tikzpicture}[scale=.45]
  \filldraw[line width=0pt, fill=gray, opacity=.2] (-4,-3)--(3,3)--(-3,3);
  \draw[line width=1pt, color=black] (4,-3)--(3,3);
  \draw[line width=1pt, color=black] (-4,-3)--(-3,3);
  \draw[line width=1pt, color=black] (4,-3)--(-4,-3);
  \draw[line width=1pt, color=black] (5,3)--(-5,3);
  \draw[line width=1pt, color=black] (-4,-3)--(3,3);
  \draw[line width=1pt, color=black, style=dashed] (4,-3)--(0.21,0.25);
  \draw[line width=1pt, color=black, style=dashed] (-3,3)--(-0.1,0.5);
  \node at (-4,-3) [color=black, anchor=east] {$x_i$};
  \node at (4,-3) [color=black, anchor=west] {$x_j$};
  \node at (-4,3) [color=black, anchor=south] {$x_k$};
  \node at (3,3) [color=black, anchor=south] {$x_l$};
  \node at (5,3) [color=black, anchor=west] {$x_{kl}$};
  \draw[line width=1.5pt, color=darkgray] (-4.5,-4)--(0,5);
  \node at (-1.5,4.5) [color=darkgray, anchor=south west] {$n_{ij}$};
  \draw[color=black, fill=black] (-4,-3) circle(.15);
  \draw[color=black, fill=black] (4,-3) circle(.15);
  \draw[color=black, fill=black] (-3,3) circle(.15);
  \draw[color=black, fill=black] (3, 3) circle(.15);
  \draw[color=black, fill=gray] (-1, 3) circle(.15);
  \node at (-0.65,3)[anchor=south west, color=darkgray] {$\pi_{kl}(x_i)$};
  \end{tikzpicture}
  \end{center}
  \caption{Null projection of the vertex $x_i$ on the opposite edge $e_{kl}$.}
  \label{fig:orthproj}
  \end{figure}
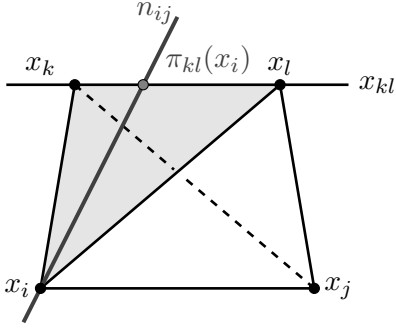
  
  Each geodesic $x_{kl}$ contains exactly two such projections, namely  $\pi_{kl}(x_i)$ and $\pi_{kl}(x_j)$ for the two vertices $x_i$ and $x_j$ opposite  $x_{kl}$. For each edge geodesic $x_{ij}$,  this defines two geodesic planes that intersect in $x_{ij}$, the planes through $x_i,x_j,\pi_{kl}(x_i)$ and through $x_i,x_j,\pi_{kl}(x_j)$, as shown in Figure \ref{fig:intplanes}. We call them the {\em internal planes} of the lightlike tetrahedron at the edge $e_{ij}$. 
  The angles between these planes are given by the ratios of the generalized sine functions of the edge lengths.

  \begin{proposition}\label{prop:lorentz_angle}Let $L$ be a lightlike tetrahedron in $\X_\Lambda$ with vertices $x_i$ as in Proposition \ref{prop:ltetrahedron}. Then the Lorentzian angle $\varphi_{ij}$ between the internal planes at the edge $e_{ij}$  is given by
  \begin{align*}
  2\cosh(\varphi_{ij})=|z_{ij}|+|z_{ij}|^{-1},
  \end{align*}
  where $|z_{ij}|=|z_{ji}|$ and
  \begin{align*}
  &|z_{12}|=|z_{34}|=\Big|\frac{s_\Lambda(\beta)}{s_\Lambda(\alpha)}\Big|,
  & & |z_{31}|=|z_{24}|=\Big|\frac{s_\Lambda(\alpha)}{s_\Lambda(\gamma)}\Big|,
  & & |z_{23}|=|z_{14}|=\Big|\frac{s_\Lambda(\gamma)}{s_\Lambda(\beta)}\Big|.
  \end{align*}
  \end{proposition}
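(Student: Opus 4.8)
The plan is to bring $\Theta$ to the standard position of Proposition~\ref{prop:ltetrahedron}, to identify the two internal planes at one edge as spans of the edge geodesic together with a lightlike geodesic, to compute their normal vectors in $\x_\Lambda$ explicitly, and to read off the Lorentzian dihedral angle from the bilinear form \eqref{eq:detmetric}. The three distinct values then follow from the order-six symmetry \eqref{eq:permutmatrix}, while $|z_{ij}|=|z_{ji}|$ is automatic since $e_{ij}=e_{ji}$ and the pair of internal planes is the same.

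First I would make the internal planes concrete. Work with the edge $e_{34}$ joining $x_4=\mathbb 1$ and $x_3=\exp(\gamma X_{43})$. By Corollary~\ref{cor:lightint} the null projection $\pi_{12}(x_4)$ lies on the lightlike geodesic $n_{43}$ through $x_4$, so the internal plane through $x_3,x_4,\pi_{12}(x_4)$ contains two points of $n_{43}$, hence all of it; it is therefore the unique geodesic plane $P_2$ spanned by the edge geodesic $x_{34}$ and $n_{43}$. Likewise the plane through $x_3,x_4,\pi_{12}(x_3)$ is the plane $P_1$ spanned by $x_{34}$ and the lightlike geodesic $n_{34}$ through $x_3$. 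Since $x_{34}$ is spacelike, $P_1$ and $P_2$ are not faces, hence not lightlike, but they each contain a lightlike geodesic, so both are timelike; thus they meet along the spacelike geodesic $x_{34}$ at a genuine Lorentzian angle $\varphi_{34}$, and $X_{43}^\perp\subset\x_\Lambda$ is a Lorentzian plane.

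Next I would compute the normals. By Proposition~\ref{prop:geodplane} the normal of $P_2$ at $x_4$ is the vector in $\x_\Lambda$ orthogonal to $X_{43}$ and to the lightlike tangent $N_{43}$ of $n_{43}$ at $x_4$; using \eqref{eq:xijsimple}, \eqref{eq:nexpr} and \eqref{eq:detmetric} this is $M_2=\ell\left(\begin{smallmatrix}0&1\\1&0\end{smallmatrix}\right)$, a unit spacelike matrix. For $P_1$ I would first evaluate $N_{34}$ from \eqref{eq:nmat}: using $\gamma+\alpha=-\beta$ and $\gamma+\beta=-\alpha$ it simplifies to $N_{34}=\ell\left(\begin{smallmatrix}-1&-s_\Lambda(\alpha)/s_\Lambda(\beta)\\ s_\Lambda(\beta)/s_\Lambda(\alpha)&1\end{smallmatrix}\right)$. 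The normal of $P_1$ at $x_3$ is then $A_3\rhd\tilde M_1$ with $\tilde M_1\in\x_\Lambda$ orthogonal to $X_{43}$ and $N_{34}$, namely $\tilde M_1=\ell\left(\begin{smallmatrix}0&s_\Lambda(\alpha)/s_\Lambda(\beta)\\ s_\Lambda(\beta)/s_\Lambda(\alpha)&0\end{smallmatrix}\right)$. To compare it with $M_2$ I parallel-transport it back to $x_4$ along $x_{34}$; the crucial simplification is that $A_3=\exp(\tfrac\gamma2 X_{43})$ is itself the transvection along this edge, so parallel transport from $x_3$ to $x_4$ is the action of $A_3^\inv$, and the transported normal is simply $M_1=A_3^\inv\rhd(A_3\rhd\tilde M_1)=\tilde M_1$. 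Now $M_1,M_2$ both lie in $X_{43}^\perp$ and are unit spacelike, so $\cosh\varphi_{34}=|\langle\hat M_1,\hat M_2\rangle_{\x_\Lambda}|$; a direct evaluation with \eqref{eq:detmetric} gives $\langle\hat M_1,\hat M_2\rangle_{\x_\Lambda}=\tfrac12\big(s_\Lambda(\alpha)/s_\Lambda(\beta)+s_\Lambda(\beta)/s_\Lambda(\alpha)\big)\ge 1$, hence $2\cosh\varphi_{34}=|s_\Lambda(\beta)/s_\Lambda(\alpha)|+|s_\Lambda(\alpha)/s_\Lambda(\beta)|$, which is the claim with $|z_{34}|=|z_{12}|=|s_\Lambda(\beta)/s_\Lambda(\alpha)|$. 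The remaining values come from the same computation at $e_{41}$ and $e_{42}$, or more quickly from $T$ in \eqref{eq:permutmatrix}, which fixes $x_4$, cyclically permutes $e_{41},e_{42},e_{43}$ and carries the standard tetrahedron with parameters $(\alpha,\beta,\gamma)$ to the one with parameters $(\gamma,\alpha,\beta)$, propagating the result to $|z_{42}|=|s_\Lambda(\alpha)/s_\Lambda(\gamma)|$ and $|z_{41}|=|s_\Lambda(\gamma)/s_\Lambda(\beta)|$; the value at a non-$x_4$ edge such as $e_{12}$ agrees with that at the opposite edge $e_{34}$, which one checks either by the analogous computation based at $x_1,x_2$ or by exhibiting the isometry interchanging the two. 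Since $|z_{ij}|+|z_{ij}|^{-1}=2\cosh(\log|z_{ij}|)$, the Lorentzian angle is just $\varphi_{ij}=\big|\log|z_{ij}|\big|$, and one could equivalently package the computation through the bending-parameter formula $\varphi=2\,ct_{-\sigma(X)}^\inv(a/b)$ from the discussion after Proposition~\ref{prop:stabilizer}.

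I expect the main difficulty to be organizational rather than conceptual: one must compare the two internal-plane normals at the \emph{same} point of the shared edge --- which is where the parallel-transport step and the coincidence that $A_3$ is the transvection along $e_{34}$ enter --- and one must keep the causal bookkeeping straight, verifying that $P_1$ and $P_2$ are timelike planes meeting along a spacelike geodesic so that $X_{43}^\perp$ is Lorentzian and the dihedral angle is genuinely a $\cosh$; otherwise one would land on a trigonometric analogue instead of $2\cosh\varphi_{ij}=|z_{ij}|+|z_{ij}|^{-1}$.
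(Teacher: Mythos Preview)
Your argument is correct and reaches the same conclusion as the paper, but by a genuinely different route. The paper parametrizes, for each edge $e_{ij}$, the whole one-parameter family of planes $P_{ij,kl}(t)$ through $x_i,x_j,x_{kl}(t)$, derives a closed formula for the normal $N_{ij,kl}(t)$ at the single vertex $x_i$, and then evaluates at the two null-projection values $t=-\alpha_i,-\alpha_j$; since both normals already sit at $x_i$, no transport is needed. You instead observe that each internal plane is spanned by the edge geodesic together with one of the lightlike geodesics $n_{43}$ or $n_{34}$, which hands you the normal immediately from the pair $(X_{43},N_{43})$ at $x_4$ and the pair $(X_{43},N_{34})$ at $x_3$; the price is the transvection step bringing the second normal back to $x_4$. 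Your approach is slicker in that it bypasses the auxiliary tangent $X_{ij,kl}(t)$ and the general formula for $N_{ij,kl}(t)$; the paper's approach buys you both cases $e_{ij}$ and $e_{4k}$ (hence the opposite-edge equality $|z_{12}|=|z_{34}|$) in one uniform computation, whereas you defer that equality to a symmetry argument that you only sketch.

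Two small points. First, the sentence ``$P_1$ and $P_2$ are not faces, hence not lightlike'' is not a valid inference on its own, since the face planes are lightlike \emph{and} contain spacelike edges; what actually establishes that $P_1,P_2$ are timelike is your subsequent computation showing their normals $M_1,M_2$ are spacelike. Second, the transvection step is correct but deserves a one-line justification: $A_3=\exp(\tfrac{\gamma}{2}X_{43})$ acts linearly on $\R^4$ and preserves the $3$-dimensional subspace $V_1$ defining $P_1$ (it fixes $\mathrm{span}(\mathbb 1,X_{43})$ and sends the off-diagonal entries of $A_3 N_{34}A_3$ to themselves up to the diagonal part, which lies in $\mathrm{span}(\mathbb 1,X_{43})$), so it maps the normal of $P_1$ at $\mathbb 1$ to the normal at $x_3$; this is exactly why $M_1=\tilde M_1$.
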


  \begin{proof}
  Denote by $x_{ij}$ and $x_{kl}$ the geodesics through $x_i$, $x_j$ and through $x_k$, $x_l$, parametrized as in \eqref{eq:xijparam}.
  For any point $x_{kl}(t)$ on the geodesic $x_{kl}$ we can parametrize the plane through $x_i,x_j,x_{kl}(t)$ as
  \begin{align*}
  P_{ij,kl}(t)=\Big\{A_i\rhd \exp\Big(r X_{ij}+s X_{ij,kl}(t)\Big)  \mid r,s\in \R\Big\},
  \end{align*}
  where $X_{ij}\in\x_\Lambda$ is a unit vector parameterizing the geodesic $x_{ij}$  as in \eqref{eq:xijparam} and $X_{ij, kl}(t)\in\x_\Lambda$ is the unit vector parameterizing the geodesic
  $x_{ij,kl}$ through $x_i$ and $x_{kl}(t)$ via
  $$
  x_{ij,kl}(s)=A_i\rhd \exp(s X_{ij,kl}(t)).
  $$
  These vectors can be computed directly as the normalized trace-free parts of $A_i^{-1}\rhd x_j$ and $A_i^{-1}\rhd x_{kl}(t)$, respectively.
  
  We can then compute the normal vector  $A_i\rhd N_{ij,kl}(t)$ of $P_{ij,kl}(t)$ at $x_i$ from the conditions
  \begin{align*}
  \langle N_{ij,kl}(t),X_{ij}\rangle=0,\qquad\qquad \langle N_{ij,kl}(t),X_{ij,kl}(t)\rangle=0,
  \end{align*}
  where $X_{ij}$ are the matrices from \eqref{eq:xijsimple} and \eqref{eq:xijhelp}.
  This yields for all distinct $i,j,k\in\{1,2,3\}$
  \begin{align*}
  N_{ij,4k}(t)=\tfrac{1}{|r_{ijk}(t)|^{1/2}}\Big(N_{i4}-r_{ijk}(t)N_{ik}\Big),
  \qquad\qquad
  N_{4k,ij}(t)=\tfrac{1}{|r_{ijk}(t)|^{1/2}}\Big(N_{4i}-r_{ijk}(t)N_{4j}\Big),
  \end{align*}
  with
  $r_{ijk}(t)=\tfrac{s_\Lambda(\alpha_k-t)}{s_\Lambda(t)}\tfrac{s_\Lambda(\alpha_i)}{s_\Lambda(\alpha_j)}$ and $N_{ik}$ and $N_{i4}$ given by \eqref{eq:nexpr} and \eqref{eq:nmat}.
  
  Corollary \ref{cor:lightint} gives the null projections of $x_i$ and $x_j$ on the opposite edge geodesic $x_{4k}$
  \begin{align*}
  \pi_{4k}(x_i)=x_{ij}(-\alpha_i),\qquad \pi_{4k}(x_j)=x_{ij}(-\alpha_j),
  \end{align*}
  and the null projections of $x_4$ and $x_k$ on $x_{ij}$
  \begin{align*}
  \pi_{ij}(x_4)=x_{ij}(-\alpha_i),\qquad \pi_{ij}(x_k)=x_{ij}(-\alpha_j).
  \end{align*}
  In particular, the normal vectors at $x_i$  of the plane $P_{ij,4k}(-\alpha_i)$ through $x_i,x_j,\pi_{4k}(x_i)$ and of the plane $P_{ij,4k}(-\alpha_j)$ through $x_i,x_j,\pi_{4k}(x_j)$ are given by
  \begin{align*}
  N_{ij,4k}(-\alpha_i)=N_{i4}-N_{ik},
  \qquad
  N_{ij,4k}(-\alpha_j)=\frac{|s_\Lambda(\alpha_j)|}{|s_\Lambda(\alpha_i)|}N_{i4}-\frac{|s_\Lambda(\alpha_i)|}{|s_\Lambda(\alpha_j)|}N_{ik}.
  \end{align*}
  Similarly, the normal vectors at $x_4$  to the planes $P_{4k,ij}(-\alpha_i)$  through $x_4,x_k,\pi_{ij}(x_4)$ and $P_{ij,4k}(-\alpha_j)$  through $x_4,x_k,\pi_{ij}(x_i)$ are given by
  \begin{align*}
  N_{4k,ij}(-\alpha_i)=N_{4i}-N_{4j},
  \qquad
  N_{4k,ij}(-\alpha_j)=\frac{|s_\Lambda(\alpha_j)|}{|s_\Lambda(\alpha_i)|}N_{4i}-\frac{|s_\Lambda(\alpha_i)|}{|s_\Lambda(\alpha_j)|}N_{4j}.
  \end{align*}
  In both cases, we find that the Lorentzian angle between the two planes is given by
  \begin{align*}
  2\cosh(\varphi_{ij})=2\cosh(\varphi_{4k})=\frac{|s_\Lambda(\alpha_i)|}{|s_\Lambda(\alpha_j)|}+\frac{|s_\Lambda(\alpha_j)|}{|s_\Lambda(\alpha_i)|}.
  \end{align*}
  The claim then follows by setting $\alpha_1=\alpha$, $\alpha_2=\beta$, $\alpha_3=\gamma=-\alpha-\beta$.
  \end{proof}
  
  \begin{figure}[t]
  \begin{center}
  \begin{tikzpicture}[scale=.45]
  \draw[line width=1pt, color=black, style=dashed] (4,-3)--(0.21,0.25);
  \draw[line width=1pt, color=black, style=dashed] (-3,3)--(-0.1,0.5);
  \filldraw[line width=0pt, fill=blue, opacity=.2] (-4,-3)--(-1,3)--(4,-3);
  \filldraw[line width=0pt, fill=red, opacity=.2] (-4,-3)--(1,3)--(4,-3);
  \draw[line width=1pt, color=blue] (-4,-3)--(-1,3);
  \draw[line width=1pt, color=blue] (4,-3)--(-1,3);
  \draw[line width=1pt, color=red] (-4,-3)--(1,3);
  \draw[line width=1pt, color=red] (4,-3)--(1,3);
  \draw[line width=1pt, color=black] (4,-3)--(3,3);
  \draw[line width=1pt, color=black] (-4,-3)--(-3,3);
  \draw[line width=1pt, color=black] (4,-3)--(-4,-3);
  \draw[line width=1pt, color=black] (5,3)--(-5,3);
  \draw[line width=1pt, color=black] (-4,-3)--(3,3);
  \draw[color=black, fill=black] (-4,-3) circle(.15);
  \draw[color=black, fill=black] (4,-3) circle(.15);
  \draw[color=black, fill=black] (-3,3) circle(.15);
  \draw[color=black, fill=black] (3, 3) circle(.15);
  \draw[color=red, fill=red] (1, 3) circle(.15);
  \draw[color=blue, fill=blue] (-1, 3) circle(.15);
  \node at (-4,-3) [color=black, anchor=north east] {$x_i$};
  \node at (4,-3) [color=black, anchor=north west] {$x_j$};
  \node at (-3,3) [color=black, anchor=south east] {$x_k$};
  \node at (3,3) [color=black, anchor=south west] {$x_l$};
  \node at (-1.5,3) [color=blue, anchor=south] {$\pi_{kl}(x_i)$};
  \node at (1.5,3) [color=red, anchor=south] {$\pi_{kl}(x_j)$};
  \node at (5,3)[color=black, anchor=west]{$x_{kl}$};
  \end{tikzpicture}
  \caption{Internal planes of a lightlike tetrahedron at the edge $e_{ij}$.}
  \end{center}
  \label{fig:intplanes}
  \end{figure}
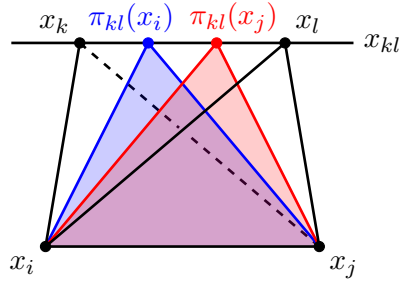
  
  Proposition \ref{prop:lorentz_angle} associates to each edge of a lightlike tetrahedron $L$ a Lorentzian angle that is given by the ratios of generalized sine functions of the edge lengths $\alpha,\beta,\gamma$. Combining these with the corresponding edge lengths, we may define a generalized complex parameters $z_{ij}=z_{ji}\in\bbC_\Lambda^\times$ for each edge $e_{ij}$ of $L$, namely
  \begin{align}\label{eq:zdefflight}
  &z_{12}=z_{34}=-\frac{s_\Lambda(\beta)}{s_\Lambda(\alpha)}e^{\ell\gamma},
  & & z_{31}=z_{24}=-\frac{s_\Lambda(\alpha)}{s_\Lambda(\gamma)}e^{\ell\beta},
  & & z_{23}=z_{14}=-\frac{s_\Lambda(\gamma)}{s_\Lambda(\beta)}e^{\ell\alpha}.
  \end{align}
  These are the shape parameters of the lightlike tetrahedron $L$.
  Note that opposite edges have equal shape parameters, while the shape parameters of adjacent edges satisfy the cross-ratio relations
  $$z'=\frac{1}{1-z},\qquad\qquad z''=\frac{z-1}{z}.$$
  
   Corollary \ref{cor:edgelength} and Proposition \ref{prop:lorentz_angle} show that the arguments of the shape parameters determine the edge lengths of a lightlike tetrahedron, while their moduli determine the angles between its internal planes. We will show in Section \ref{sec:idealtet} that they play a similar role to the classical shape parameters of ideal hyperbolic tetrahedra.
  In particular, the shape parameter of a single edge uniquely determines the geometry of a lightlike tetrahedron. 
  
  The shape parameter can also be characterized in terms of the symmetries of a lightlike tetrahedron.

  \begin{proposition}\label{eq:geomlightlike} Let $L\subset\X_\Lambda$ be a lightlike tetrahedron with vertices $x_1,x_2,x_3,x_4$ and  $x_{ij}$ the geodesic through $x_i$ and $x_j$, oriented from $x_i$ to $x_j$. 
  
  Then there is a unique isometry $T_{ij}\in\PGL^+(2,\bbC_\Lambda)$ that stabilizes $x_{ij}$, with its orientation and its adjacent null planes, which maps $x_i$ to $x_j$ and the normal vector $A_i\rhd N_{ik}$ to $A_j\rhd N_{jk}$, up to a sign.
  With the parametrization from Proposition \ref{prop:ltetrahedron} one has
  \begin{align*}
  T_{ij}=A_i\Big(\frac{z_{ij}}{2}(\mathbb{1}+\Im X_{ij})-\frac{\sigma_{ij}}{2}(\mathbb{1}-\Im X_{ij})\Big)A_i^\inv,
  \end{align*}
  where $A_i\in \PGL^+(2,\bbC_\Lambda)$ with $A_i\rhd \mathbb{1}=x_i$,   the tangent vector $X_{ij}$  of $x_{ij}$ is given by \eqref{eq:xijsimple}, \eqref{eq:xijtriple}, the shape parameter $z_{ij}=z_{ji}\in\bbC_\Lambda^\times$  by \eqref{eq:zdefflight}
  and $\sigma_{ij}=\sigma_{ji}\in\{\pm1\}$ by
  \begin{align*}
  &\sigma_{12}=\sigma_{34}=\sgn\Big(\frac{s_\Lambda(\beta)}{s_\Lambda(\alpha)}\Big),
  & & \sigma_{31}=\sigma_{24}=\sgn\Big(\frac{s_\Lambda(\alpha)}{s_\Lambda(\gamma)}\Big),
  & & \sigma_{23}=\sigma_{14}=\sgn\Big(\frac{s_\Lambda(\gamma)}{s_\Lambda(\beta)}\Big).
  \end{align*}
  \end{proposition}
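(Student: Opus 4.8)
The plan is to parametrize the candidate isometries with Proposition~\ref{prop:stabilizer} and then pin down the remaining freedom using the last two conditions. Put $\Theta$ in the standard position of Proposition~\ref{prop:ltetrahedron}, so that $x_i=A_i\rhd\mathbb 1$ with $A_i^\circ=A_i$ and with the vectors $X_{ij},N_{ij}\in\x_\Lambda$ given explicitly by \eqref{eq:xijsimple}, \eqref{eq:xijhelp}, \eqref{eq:nexpr}, \eqref{eq:nmat}. By Proposition~\ref{prop:stabilizer}, any isometry preserving the oriented geodesic $x_{ij}(\R)$ lies in the abelian two-parameter group $\{A_i\exp(\tfrac\theta2 X_{ij})\,U\,A_i^\inv\mid\theta\in\R,\ U\in\Stab(X_{ij})\}$. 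I would first note that every such element fixes each of the two lightlike planes adjacent to the edge $e_{ij}$: the translation factor does because $x_{ij}(\R)$ lies in both planes, and the factor $U$ does because, $X_{ij}$ being a spacelike unit vector, $U$ acts on $X_{ij}^{\perp}\cong\R^{1,1}$ as a boost and hence preserves each of its two null lines, which are the directions of the normals of the two adjacent planes; conversely, no element of $\PGL^+(2,\bbC_\Lambda)$ preserving $x_{ij}(\R)$ with its orientation interchanges the two planes, since interchanging the two null lines of $X_{ij}^{\perp}$ while fixing $X_{ij}$ is orientation-reversing on $\x_\Lambda$. As the required $T_{ij}$ maps $x_i$ to $x_j$ along $x_{ij}$ it preserves the orientation of $x_{ij}(\R)$, so it has the above form, and it remains to determine $\theta$ and $U$.

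Since $U\rhd\mathbb 1=\mathbb 1$, the condition $T_{ij}\rhd x_i=x_j$ reads $x_{ij}(\tfrac\theta2)=x_j$ and hence forces $\tfrac\theta2=t_{ij}$, the parameter with $x_{ij}(t_{ij})=x_j$; by \eqref{eq:xijhelp} this is $t_{ij}=-\alpha_i-\alpha_j$ for $i,j\in\{1,2,3\}$, and by \eqref{eq:xexps} it is $t_{4i}=\alpha_i$ for the edges at $x_4$ --- in each case exactly the exponent appearing in $z_{ij}$ in \eqref{eq:zdefflight}. With $\theta$ fixed, $T_{ij}$ fixes the face $f_k$ containing $e_{ij}$ and sends $x_i$ to $x_j$, so the image of the normal $A_i\rhd N_{ik}$ is automatically a real multiple of the normal $A_j\rhd N_{jk}$ of $f_k$ at $x_j$; there is a unique boost $U=a\mathbb 1+b\,\Im X_{ij}$ making this multiple $\pm1$, i.e.\ making the image a unit normal with the normalization $\langle N_{jk},X_{jk}\rangle_{\x_\Lambda}=-1$. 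Inserting the explicit lightlike normals \eqref{eq:nexpr}, \eqref{eq:nmat} and solving, one finds $a/b$ governed precisely by the modulus $|z_{ij}|=|s_\Lambda(\cdot)/s_\Lambda(\cdot)|$ of Proposition~\ref{prop:lorentz_angle} and its sign by $\sigma_{ij}$; concretely one may normalize so that $a+b=|z_{ij}|(a-b)$, which is solvable with $a\neq\pm b$ since $|z_{ij}|>0$. This establishes both existence and uniqueness of $T_{ij}$.

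It remains to recognize $M:=A_i^\inv T_{ij}A_i=\exp(\tfrac{t_{ij}}{2}X_{ij})\,U$ as the matrix in the statement. Because $X_{ij}$ is a spacelike unit vector, $\det(\Im X_{ij})=-1$, so $(\Im X_{ij})^2=\mathbb 1$ and $P_\pm:=\tfrac12(\mathbb 1\pm\Im X_{ij})$ are complementary idempotents; moreover $\Im X_{ij}$, $X_{ij}=\ell\,\Im X_{ij}$, $\exp(\tfrac{t_{ij}}{2}X_{ij})$ and $U$ are all polynomials in $\Im X_{ij}$ with coefficients in the commutative algebra $\bbC_\Lambda$. Expanding $\exp(\tfrac{t_{ij}}{2}X_{ij})\,U$ in the basis $\{P_+,P_-\}$ yields $(a+b)e^{\ell t_{ij}/2}P_+ +(a-b)e^{-\ell t_{ij}/2}P_-$, and since $z_{ij}=-\sigma_{ij}|z_{ij}|e^{\ell t_{ij}}$ and $a+b=|z_{ij}|(a-b)$ this equals $-\sigma_{ij}(a-b)e^{-\ell t_{ij}/2}\big(z_{ij}P_+-\sigma_{ij}P_-\big)$, a $\bbC_\Lambda^\times$-multiple of $z_{ij}P_+-\sigma_{ij}P_-$, hence equal to it in $\PGL^+(2,\bbC_\Lambda)$. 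Conjugating by $A_i$ gives the stated formula; the remaining oriented edges follow by the same computation --- cycling through them with the isometry $T$ of \eqref{eq:permutmatrix} (which cyclically permutes $\alpha,\beta,\gamma$) and using $T_{ji}=T_{ij}^{\inv}$. I expect the middle step to be the main obstacle: reading off the precise values $z_{ij}$ and $\sigma_{ij}$ --- the correct ratios of generalized sines and their signs --- from the way the explicit lightlike normals \eqref{eq:nmat} are rescaled by the boost and transported along the edge.
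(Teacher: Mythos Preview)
Your approach mirrors the paper's: parametrize via the stabilizer of Proposition~\ref{prop:stabilizer}, fix the translation parameter from $T_{ij}\rhd x_i=x_j$, fix the boost from the normal-vector condition, and then simplify using the idempotents $P_\pm=\tfrac12(\mathbb 1\pm\Im X_{ij})$. There is one bookkeeping slip: since $\exp(\tfrac\theta2 X_{ij})\rhd\mathbb 1=\exp(\theta X_{ij})$ (as $X_{ij}^\circ=X_{ij}$), the condition $T_{ij}\rhd x_i=x_j$ reads $x_{ij}(\theta)=x_j$ and hence $\theta=t_{ij}$, not $\tfrac\theta2=t_{ij}$; your subsequent formula $M=\exp(\tfrac{t_{ij}}{2}X_{ij})\,U$ is nonetheless the correct one, so the idempotent expansion and the identification with $z_{ij}P_+-\sigma_{ij}P_-$ go through unchanged. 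As a bonus you supply an argument the paper leaves implicit, namely that every element of the orientation-preserving stabilizer of the spacelike geodesic $x_{ij}$ automatically fixes each of the two adjacent lightlike planes rather than swapping them.
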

  
  \begin{figure}[t]
  \begin{center}
  \begin{tikzpicture}[scale=.45]
  \draw[color=black, fill=black] (-6,0) circle(.15);
  \draw[color=black, fill=black] (6,0) circle(.15);
  \draw[color=black, fill=black] (0,4) circle(.15);
  \draw[color=black, fill=black] (0, -4) circle(.15);
  \draw[line width=1pt, color=black, ->,>=stealth] (0,-4)--(0,3.8);
  \draw[line width=.5pt, color=black,  ->,>=stealth, ] (0,-4)--(5.8,-.1);
  \draw[line width=.5pt, color=black,  ->,>=stealth, ] (0,-4)--(-5.8,-.1);
  \draw[line width=.5pt, color=black,  ->,>=stealth, ] (0,4)--(5.8,.1);
  \draw[line width=.5pt, color=black,  ->,>=stealth, ] (0,4)--(-5.8,.1);
  \draw[line width=.5pt, color=black, style=dashed,   ] (-6,0)--(-1.5,0);
  \draw[line width=.5pt, color=black, style=dashed,] (6,0)--(2.5,0);
  \node at (3,-2.2)[anchor=west, color=black, ]{$x_{il}$};
  \node at (3,2.2)[anchor=west, color=black, ]{$x_{jl}$};
  \node at (-3,-2.2)[anchor=east, color=black, ]{$x_{ik}$};
  \node at (-3,2.2)[anchor=east, color=black, ]{$x_{jk}$};
  \node at (0,0)[anchor=east, color=black, ]{$x_{ij}$};
  \draw[line width=.5pt, ->,>=stealth,color=black,  ] (1,-2)--(1,2);
  \node at (1.1,0)[anchor=west, color=black, ]{$T_{ij}$};
  \node at (0,-4.2)[anchor=north, color=black, ]{$x_i$};
  \node at (0,4.2)[anchor=south, color=black, ]{$x_j$};
  \node at (6.2,0)[anchor=west, color=black, ]{$x_l$};
  \node at (-6.2,0)[anchor=east, color=black, ]{$x_k$};
  \node at (2.5,-4)[anchor=north west, color=black, ]{$A_i\rhd N_{ik}$};
  \draw[line width=1pt, ->,>=stealth, color=black, ] (.5,-3)--(2.5, -4);
  \draw[line width=1pt, ->,>=stealth, color=black, ] (.5,3)--(2.5, 4);
  \node at (2.5, 4)[anchor=south west, color=black, ]{$A_j\rhd N_{jk}$};
  \end{tikzpicture}
  
  \caption{The isometries and normal vectors from Proposition \ref{eq:geomlightlike}.}
  \end{center}
  \end{figure}
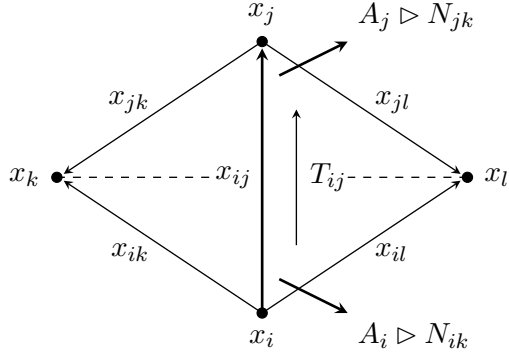
  
  \begin{proof}
  This follows from the expressions \eqref{eq:nexpr}, \eqref{eq:xijsimple}, \eqref{eq:xijhelp}, \eqref{eq:nmat} for the normal and tangent vectors derived in the proof of Proposition \ref{prop:ltetrahedron}.
  
  By Proposition \ref{prop:stabilizer} and equation \eqref{eq:xijparam}, we can parametrize $T_{ij}$ as
  \begin{align*}
  T_{ij}&=A_i\exp(\frac{\ell\theta_{ij}}{2}\Im X_{ij})\Big(a_{ij}\mathbb{1}+b_{ij}\Im X_{ij}\Big)A_i^\inv
  \cr
  &=A_i\Big(\tfrac{a_{ij}+b_{ij}}{2}e^{\ell\theta_{ij}/2}(\mathbb{1}+\Im X_{ij})+\tfrac{a_{ij}-b_{ij}}{2}e^{-\ell\theta_{ij}/2}(\mathbb{1}-\Im X_{ij})\Big)A_i^\inv.
  \end{align*}
  
  The requirement that $T_{ij}$ maps $x_i$ to $x_j$ determines the parameter $\theta_{ij}$ as follows. Using equation \eqref{eq:xexps}, we can rewrite this requirement as
  \begin{align*}
  T_{ij}\rhd x_i=A_i\exp\Big(\frac{\ell\theta_{ij}}{2}\Im X_{ij}\Big)\rhd\mathbb{1}=x_j=A_j\rhd\mathbb{1},
  \end{align*}
  which is equivalent to
  \begin{align*}
  R_{ij}:=\exp\Big(-\frac{\alpha_j}{2}X_{4j}\Big)\exp\Big(\frac{\alpha_i}{2}X_{4i}\Big)\exp\Big(\frac{\theta_{ij}}{2}X_{ij}\Big)\in \PSL(2,\R)_\Lambda.
  \end{align*}
  By an explicit computation of the matrices $R_{ij}$, one finds that this is satisfied if and only if
  \begin{align*}
  \theta_{12}=\theta_{34}=\alpha_3,\qquad \theta_{31}=\theta_{24}=\alpha_2,\qquad \theta_{23}=\theta_{14}=\alpha_1,
  \end{align*}
  with $\theta_{ij}=\theta_{ji}$. In the case $\Lambda=1$, this holds up to multiples of $\pi$.
  
  To investigate the action of $T_{ij}$ on the normal vectors of the adjacent faces, denote by $A_i\rhd N_{ik}$ the lightlike vector at $x_i$ normal to a face $f_k$ adjacent to $x_{ij}$, with the normalization $\langle N_{ik},X_{ik}\rangle_{\x_\Lambda}=-1$, and with distinct $i,j,k\in\{1,2,3,4\}$. Then $T_{ij}$ stabilizes the null planes intersecting along the geodesic $x_{ij}$ and preserves its orientation if and only if
  \begin{align*}
  T_{ij}\rhd (A_i\rhd N_{ik})=\sigma_{ij} A_j\rhd N_{jk},
  \qquad\qquad
  T_{ij}\rhd (A_i\rhd X_{ij})=-A_j\rhd X_{ji},
  \end{align*}
   for some $\sigma_{ij}\in\R^\times$. With the condition  $|\sigma_{ij}|=1$ this is equivalent to
  \begin{align*}
  \Big(a_{ij}\mathbb{1}+b_{ij}\Im X_{ij}\Big)N_{ik}\Big(a_{ij}\mathbb{1}-b_{ij}\Im X_{ij}\Big)=\sigma_{ij}R_{ij}^\inv N_{jk}R_{ij},\qquad a_{ij}^2-b_{ij}^2=1,
  \end{align*}
  and, again by a direct computation, one finds
  \begin{align*}
  a_{12}\pm b_{12}=a_{34}\pm b_{34}=\Big|\frac{s_\Lambda(\alpha_2)}{s_\Lambda(\alpha_1)}\Big|^{\pm 1/2},
  \qquad \qquad
  \sigma_{12}=\sigma_{34}=\sgn\Big(\frac{s_\Lambda(\alpha_2)}{s_\Lambda(\alpha_1)}\Big),
  \cr
  a_{31}\pm b_{31}=a_{24}\pm b_{24}=\Big|\frac{s_\Lambda(\alpha_1)}{s_\Lambda(\alpha_3)}\Big|^{\pm1/2},
  \qquad\qquad
  \sigma_{31}=\sigma_{24}=\sgn\Big(\frac{s_\Lambda(\alpha_1)}{s_\Lambda(\alpha_3)}\Big),
  \cr
  a_{23}\pm b_{23}=a_{14}\pm b_{14}=\Big|\frac{s_\Lambda(\alpha_3)}{s_\Lambda(\alpha_2)}\Big|^{\pm1/2},
  \qquad\qquad
  \sigma_{23}=\sigma_{14}=\sgn\Big(\frac{s_\Lambda(\alpha_3)}{s_\Lambda(\alpha_2)}\Big),
  \end{align*}
  with $a_{ij}=a_{ji}$, $b_{ij}=b_{ji}$ and $\sigma_{ij}=\sigma_{ji}$. Factoring out $-\sigma_{ij}(a_{ij}-b_{ij})e^{-\ell\theta_{ij}/2}$ and inserting $\alpha_1=\alpha$, $\alpha_2=\beta$ and $\alpha_3=\gamma$, we obtain the expressions in the proposition.
  \end{proof}

  \subsection{Ideal tetrahedra} 
  \label{sec:idealtet}
  
  Corollary \ref{cor:edgelength} shows that the edge lengths of a lightlike tetrahedron in $\X_\Lambda$ play a similar role to the dihedral angles of an ideal hyperbolic tetrahedron: up to isometries, they determine the lightlike tetrahedron completely. Indeed, the duality between lightlike planes in $\X_\Lambda$ and points on the ideal boundary $\partial_\infty \Y_\Lambda$ suggests that lightlike tetrahedra should be dual to tetrahedra in $\Y_\Lambda$ whose vertices are points in $\partial_\infty \Y_\Lambda$, pairwise connected by spacelike geodesics.

  Such tetrahedra are precisely the generalized ideal tetrahedra introduced and investigated by Danciger in \cite{DaPhD, Da2}, up to the fact that we exclude the degenerate ones. In this section we review the results on generalized ideal tetrahedra in \cite{DaPhD, Da2} that are needed in the following and relate them to the corresponding statements about lightlike tetrahedra. We then show that lightlike and ideal tetrahedra are dual under the projective duality from Sections \ref{sec:projdual1} and \ref{subsec:ideldual}.

  \begin{definition}
  An ideal tetrahedron in $\Y_\Lambda$ is a non-degenerate convex geodesic 3-simplex whose vertices are points in $\partial_\infty\Y_\Lambda$ and whose faces lie on spacelike geodesic planes.
  \end{definition}
  
  As all vertices of an ideal tetrahedron are contained in $\partial_\infty\Y_\Lambda$ and all faces lie on spacelike geodesic planes, the action of the isometry group $\PGL^+(2,\bbC_\Lambda)$ on $\partial_\infty \Y_\Lambda$ allows one to map three vertices of an ideal tetrahedron to fixed reference points in $\partial_\infty \Y_\Lambda$, as in Proposition \ref{lem3pt}. 
  It is shown in \cite[Proposition 3]{Da2} that the remaining vertex is 
   then parametrized by the cross-ratio from Definition \ref{def:crossrat}. Alternatively, this vertex  is given by two real parameters $\alpha,\beta$, which can be viewed as generalized dihedral angles.

  \begin{proposition}\label{prop:idealtet}
  Let $I$  be an ideal tetrahedron in $\Y_\Lambda$  with vertices $y_1,y_2,y_3,y_4$.  Then there is a unique isometry  $B\in \PGL^+(2,\bbC_\Lambda)$ and $\alpha,\beta,\gamma\in\R$, satisfying $\alpha+\beta+\gamma=0$, such that
  \begin{align*}
  &B\rhd y_1=\left(\begin{matrix} 1 & 0 \cr 0 & 0\end{matrix}\right),
  \qquad
  B\rhd y_2=\left(\begin{matrix} 0 & 0 \cr 0 &  1 \end{matrix}\right),
  \qquad
  B\rhd y_3=\left(\begin{matrix} 1 & 1 \cr 1 & 1 \end{matrix}\right),\nonumber\\
  &
  B\rhd y_4=\left(\begin{matrix} \frac{s_\Lambda(\beta)^2}{s_\Lambda(\alpha)^2} & -\frac{s_\Lambda(\beta)}{s_\Lambda(\alpha)}e^{\ell\gamma} \cr -\frac{s_\Lambda(\beta)}{s_\Lambda(\alpha)}e^{-\ell\gamma}& 1 \end{matrix}\right).
  \end{align*}
  For $\Lambda=1$, one can choose $0<|\alpha|,|\beta|,|\gamma|<\pi$.
  \end{proposition}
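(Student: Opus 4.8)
The plan is to peel off the isometry $B$ from results already in hand, reduce the statement to a single reparametrisation identity for the cross-ratio, and then establish that identity by hand. Since the vertices $y_1,y_2,y_3,y_4$ of an ideal tetrahedron are distinct points of $\partial_\infty\Y_\Lambda$ pairwise joined by spacelike geodesics, Proposition \ref{lem3pt} gives a \emph{unique} $B\in\PGL^+(2,\bbC_\Lambda)$ with $B\rhd y_1=v_1v_1^\dag$, $B\rhd y_2=v_2v_2^\dag$, $B\rhd y_3=v_3v_3^\dag$, that is, with $B\rhd y_i$ equal to the first three reference matrices in the statement. By the discussion preceding Definition \ref{def:crossrat} (in particular \eqref{eq:yparamcp}), the fourth vertex is then $B\rhd y_4=v_4v_4^\dag$ where, up to rescaling, $v_4$ has entries $z$ and $1$ with $z=\mathrm{cr}(y_1,y_2,y_3,y_4)\in\bbC_\Lambda^\times\setminus\{1\}$, so that
\begin{align*}
B\rhd y_4=v_4v_4^\dag=\begin{pmatrix} z\bar z & z\\ \bar z & 1\end{pmatrix}=\begin{pmatrix} |z|^2 & z\\ \bar z & 1\end{pmatrix}.
\end{align*}
Hence $B$ is already the asserted isometry, and everything reduces to writing the cross-ratio as $z=-\tfrac{s_\Lambda(\beta)}{s_\Lambda(\alpha)}\,e^{\ell\gamma}$ with $\alpha,\beta,\gamma\in\R$ and $\alpha+\beta+\gamma=0$: because $\overline{e^{\ell\gamma}}=e^{-\ell\gamma}$ and $|e^{\ell\gamma}|^2=1$ by \eqref{eq:trigids}, such a $z$ automatically satisfies $|z|^2=s_\Lambda(\beta)^2/s_\Lambda(\alpha)^2$ and $\bar z=-\tfrac{s_\Lambda(\beta)}{s_\Lambda(\alpha)}e^{-\ell\gamma}$, and the displayed matrix becomes precisely the one in the statement.

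The remaining, and main, task is to show that the map $F\colon(\alpha,\beta,\gamma)\mapsto-\tfrac{s_\Lambda(\beta)}{s_\Lambda(\alpha)}e^{\ell\gamma}$, restricted to $\{\alpha+\beta+\gamma=0,\ \alpha\beta\gamma\neq0\}$ and, for $\Lambda=1$, to $0<|\alpha|,|\beta|,|\gamma|<\pi$, takes every cross-ratio of an ideal tetrahedron as a value. I would invert $F$ by a ``polar decomposition'' in $\bbC_\Lambda$: writing $z=p+\ell q$, one first solves $z=r\,e^{\ell\gamma}$ with $r\in\R^\times$ — possible exactly when $|z|^2>0$ (automatic for $\Lambda=0,1$, and part of the spacelike-edge hypothesis for $\Lambda=-1$ by the proof of Proposition \ref{lem3pt}), giving $r=\pm\sqrt{|z|^2}$, $c_\Lambda(\gamma)=p/r$, $s_\Lambda(\gamma)=q/r$, with the sign of $r$ pinned down for $\Lambda=-1$ by $c_{-1}(\gamma)\geq1$ and only up to the stated normalisation for $\Lambda=1$; here $\gamma\neq0$ is equivalent to $\Im z\neq0$, which is the non-degeneracy hypothesis. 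One then solves $r=-s_\Lambda(\beta)/s_\Lambda(\alpha)$ with $\alpha+\beta=-\gamma$, which by the addition formulas \eqref{eq:trigids} amounts to $ct_\Lambda(\alpha)=\bigl(r-c_\Lambda(\gamma)\bigr)/s_\Lambda(\gamma)$ and is solvable for $\alpha\neq0$ exactly when $|z-1|^2>0$ (again the spacelike-edge hypothesis). For $\Lambda=0$ this yields the closed form $\alpha=q/\bigl(p(p-1)\bigr)$, $\beta=-q/(p-1)$, $\gamma=q/p$; for $\Lambda=-1$ one uses $\arctanh$ and the inverse of $\coth$; and for $\Lambda=1$ it is the classical parametrisation of an ideal hyperbolic tetrahedron by its dihedral angles. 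In each case one checks that the solution lands back in the stated domain and recovers $z$, which completes the proof.

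I expect the case-by-case verification in the second paragraph — in particular matching the conditions on $z$ imposed by the definition of an ideal tetrahedron with the conditions under which the inversion of $F$ succeeds, and keeping track of the (mild) non-uniqueness of $(\alpha,\beta,\gamma)$, which is unique for $\Lambda=0,-1$ but for $\Lambda=1$ only up to the normalisation in the statement — to be the only real obstacle; the reduction in the first paragraph is a direct application of Proposition \ref{lem3pt} and Definition \ref{def:crossrat}, and the manipulations with $c_\Lambda,s_\Lambda$ are routine. As a consistency check one notes that this $z$ is exactly the shape parameter $z_{12}$ attached to the dual lightlike tetrahedron of Proposition \ref{prop:ltetrahedron} in \eqref{eq:zdefflight}, which is reassuring in view of the projective duality.
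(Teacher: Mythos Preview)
Your proposal is correct and follows essentially the same route as the paper: invoke Proposition \ref{lem3pt} to pin down $B$ and reduce to the single cross-ratio $z\in\bbC_\Lambda^\times\setminus\{1\}$, then use the spacelike-edge conditions to write $z$ in polar form. The only cosmetic difference is that the paper performs two simultaneous polar decompositions, $z=r_1e^{\ell\gamma}$ and $1-z=r_2e^{-\ell\beta}$, and eliminates $r_1,r_2$ to obtain \eqref{eq:zpar}, whereas you first solve $z=r\,e^{\ell\gamma}$ and then separately solve $r=-s_\Lambda(\beta)/s_\Lambda(\alpha)$ via the addition formulas; these are equivalent, and your version has the minor advantage of making explicit where each hypothesis (non-degeneracy for $\gamma\neq0$, the condition $|1-z|^2>0$ for solvability in $\alpha$) enters.
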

  
  \begin{proof} As $y_1,y_2,y_3\in\partial_\infty \Y_\Lambda$ lie on a spacelike geodesic plane, by Proposition \ref{lem3pt}   there is a unique isometry
   $B\in \PGL^+(2,\bbC_\Lambda)$ with
  \begin{align*}
  B\rhd y_1=\left(\begin{matrix} 1 & 0 \cr 0 & 0\end{matrix}\right),
  \qquad
  B\rhd y_2=\left(\begin{matrix} 0 & 0 \cr 0 &  1 \end{matrix}\right),
  \qquad
  B\rhd y_3=\left(\begin{matrix} 1 & 1 \cr 1 & 1 \end{matrix}\right),
  \end{align*}
   up to a permutation of the vertices. The remaining vertex 
  is then given by $B\rhd y_4=v_4v_4^\dag$ with 
  \begin{align*}
  v_4=\begin{pmatrix} |z|^2 & z\\ \bar z & 1\end{pmatrix},\qquad \qquad z\in\bbC_\Lambda^\times\setminus\{1\}.
  \end{align*}

As all faces lie on spacelike geodesic planes, by the proof of Proposition \ref{prop:idealtet} one has  $1-z\in\bbC_\Lambda^\times\setminus\{1\}$. In particular, there exists $r_1,r_2,\beta,\gamma\in\R^\times$ such that
  \begin{align*}
  z=r_1 e^{\ell\gamma},\qquad\qquad 1-z=r_2 e^{-\ell\beta}.
  \end{align*}
  Eliminating the parameters $r_1,r_2$ yields
  \begin{align}\label{eq:zpar}
  z=-\frac{s_\Lambda(\beta)}{s_\Lambda(\alpha)}e^{\ell\gamma},\qquad\qquad \alpha+\beta+\gamma=0,
  \end{align}
  and therefore
  \begin{align*}
  B\rhd y_4=\left(\begin{matrix}\frac{s_\Lambda(\beta)^2}{s_\Lambda(\alpha)^2} & -\frac{s_\Lambda(\beta)}{s_\Lambda(\alpha)}e^{\ell\gamma} \cr -\frac{s_\Lambda(\beta)}{s_\Lambda(\alpha)}e^{-\ell\gamma}& 1\end{matrix}\right).
  \end{align*}
  \end{proof}

  Equation \eqref{eq:zpar} relates the parameters $\alpha,\beta$ that parametrize an ideal tetrahedron in Proposition \ref{prop:idealtet} to the generalized cross-ratio of its vertices from Definition \ref{def:crossrat}. By considering also the images of the cross-ratio under the action of the subgroup \eqref{eq:permutmatrix}  that permutes the vertices $B\rhd y_1$, $B\rhd y_2$ and  $B\rhd y_3$, one obtains all the cross-ratios of a generalized ideal tetrahedron \cite[Section 3.1]{Da2}.

  \begin{corollary}\label{cor:crossratio}
  The cross-ratios of vertices of the ideal tetrahedron in Proposition \ref{prop:idealtet}  are given by
  \begin{align*}
  z
  =-\frac{s_\Lambda(\beta)}{s_\Lambda(\alpha)}e^{\ell\gamma},
  \qquad
  \frac 1 {1-z}
  =-\frac{s_\Lambda(\alpha)}{s_\Lambda(\gamma)}e^{\ell\beta},
  \qquad
  \frac{z-1} z
  =-\frac{s_\Lambda(\gamma)}{s_\Lambda(\beta)}e^{\ell\alpha},
  \end{align*}
  and their multiplicative inverses.
  \end{corollary}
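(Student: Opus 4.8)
The plan is to bootstrap from equation \eqref{eq:zpar}, obtained in the proof of Proposition \ref{prop:idealtet}, which states precisely that the cross-ratio $\mathrm{cr}(y_1,y_2,y_3,y_4)=z$ equals $-s_\Lambda(\beta)/s_\Lambda(\alpha)\cdot e^{\ell\gamma}$ with $\alpha+\beta+\gamma=0$. Thus the first of the three displayed identities is nothing but a restatement of Proposition \ref{prop:idealtet}. Throughout I will use that $z,1-z\in\bbC_\Lambda^\times$ and $s_\Lambda(\alpha),s_\Lambda(\beta),s_\Lambda(\gamma)\ne 0$ — consequences of the vertices being pairwise joined by spacelike geodesics — so that all the divisions below make sense even for $\Lambda=0,-1$.

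For the remaining two identities I would use the order-six symmetry group of the configuration. By \eqref{eq:permutmatrix} the isometries $T,I$ permute the reference points $v_1=\infty,v_2=0,v_3=1$, hence the distinguished vertices $B\rhd y_1,B\rhd y_2,B\rhd y_3$, and under the induced relabeling $z=\mathrm{cr}(y_1,y_2,y_3,y_4)$ runs through the orbit $z,\frac 1{1-z},\frac{z-1}z,\frac 1z,1-z,\frac z{z-1}$ listed after Definition \ref{def:crossrat}. Applying Proposition \ref{prop:idealtet} to the tetrahedron whose first three vertices have been cyclically permuted by $T$ — an operation that acts on cross-ratios by $z\mapsto 1/(1-z)$ and cyclically permutes the triple $(\alpha,\beta,\gamma)$ accordingly — yields $\frac 1{1-z}=-\frac{s_\Lambda(\alpha)}{s_\Lambda(\gamma)}e^{\ell\beta}$; applying it with $T$ replaced by $T^2$ (which acts by $z\mapsto (z-1)/z$) yields $\frac{z-1}z=-\frac{s_\Lambda(\gamma)}{s_\Lambda(\beta)}e^{\ell\alpha}$. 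Finally $I$ exchanges $v_1,v_2$ and fixes $v_3$, hence inverts the cross-ratio, so the three remaining members of the orbit are exactly the multiplicative inverses of the first three.

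An equivalent and completely self-contained alternative, which I would in fact write out to keep the argument watertight, is to verify the three identities directly from $z=-s_\Lambda(\beta)/s_\Lambda(\alpha)\cdot e^{\ell\gamma}$ and $\alpha+\beta+\gamma=0$, using the generalized trigonometric identities \eqref{eq:trigids} and $\overline{e^{\ell\theta}}=e^{-\ell\theta}$. For instance, expanding $e^{\ell\gamma}=c_\Lambda(\gamma)+\ell s_\Lambda(\gamma)$ and substituting $s_\Lambda(\alpha)=-s_\Lambda(\beta+\gamma)=-\big(s_\Lambda(\beta)c_\Lambda(\gamma)+c_\Lambda(\beta)s_\Lambda(\gamma)\big)$ gives $s_\Lambda(\alpha)(1-z)=s_\Lambda(\alpha)+s_\Lambda(\beta)e^{\ell\gamma}=-s_\Lambda(\gamma)e^{-\ell\beta}$, which rearranges to $\frac 1{1-z}=-\frac{s_\Lambda(\alpha)}{s_\Lambda(\gamma)}e^{\ell\beta}$; the formula for $(z-1)/z$ follows symmetrically or by combining this with the formula for $z$, and the three inverses are immediate. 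There is no genuine obstacle here; the only points that need care are the bookkeeping of the cyclic permutation of $(\alpha,\beta,\gamma)$ induced by $T$, so that the three expressions $-s_\Lambda(\cdot)/s_\Lambda(\cdot)\cdot e^{\ell(\cdot)}$ are attached to the correct cross-ratios, and recording explicitly that every denominator that appears is a unit in $\bbC_\Lambda$, since $\bbC_\Lambda$ fails to be a field for $\Lambda\le 0$.
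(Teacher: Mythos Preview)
Your proposal is correct and follows essentially the same route as the paper: the paper derives the first identity as \eqref{eq:zpar} in the proof of Proposition \ref{prop:idealtet} and then obtains the remaining cross-ratios by acting with the permutation subgroup \eqref{eq:permutmatrix}, exactly as you outline. Your additional direct verification via the generalized trigonometric identities \eqref{eq:trigids} is a welcome supplement that the paper does not spell out, and your explicit attention to the invertibility of the denominators in $\bbC_\Lambda$ is appropriate.
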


  As for lightlike tetrahedra, using the symmetries \eqref{eq:permutmatrix}, we may always choose two of the parameters $\alpha,\beta,\gamma$ in Proposition \ref{prop:idealtet} to be positive. For $\Lambda=1$, due to periodicity, we can further choose $0<|\alpha|,|\beta|,|\gamma|<\pi$. We then obtain the following parametrization of an ideal tetrahedron that is the counterpart of Proposition \ref{prop:tetpara}.
  
  \begin{proposition} \label{prop:idealpara}
  The vertices in Proposition \ref{prop:idealtet} define an ideal tetrahedron in $\Y_\Lambda$ for all $\alpha,\beta,\gamma$ with $\alpha+\beta+\gamma=0$. Up to isometries, any ideal tetrahedron $I\subset\Y_\Lambda$ admits a global parametrization
  \begin{align*}
 I=\Big\{y(t,r,\theta)\in\Y_\Lambda \mid t\geq t(r,\theta),\; 0\leq r\leq r(\theta),\;  -\alpha\leq\theta\leq 0\, \Big\},
  \end{align*}
  where
  \begin{align*}
  &y(t,r,\theta)=\frac{1}{t}\left(\begin{matrix} t^2+|z(r,\theta)|^2 & z(r,\theta) \cr \bar z(r,\theta) & 1 \end{matrix}\right), & & z(r,\theta)=re^{\ell(\theta-\beta)}-\frac{s_\Lambda(\beta)}{s_\Lambda(\alpha)}e^{\ell\gamma},
  \\
  & t(r,\theta)=\Big(\frac{s_\Lambda(\theta-\gamma)}{s_\Lambda(\alpha)}r-r^2\Big)^{1/2}, & & r(\theta)=\frac{s_\Lambda(\beta)}{s_\Lambda(\alpha)}\frac{s_\Lambda(\gamma)}{s_\Lambda(\theta-\beta)},
  \end{align*}
  with $\alpha,\beta>0$ for all $\Lambda$ and $\alpha+\beta<\pi$ for $\Lambda=1$.
  \end{proposition}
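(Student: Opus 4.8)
By Proposition~\ref{prop:idealtet} it suffices to treat the tetrahedron $\Theta$ with the explicit vertices $y_1,y_2,y_3,y_4$ given there, and the plan is to follow the proof of Proposition~\ref{prop:tetpara}: realise $\Theta$ as the projectivisation of the convex cone spanned by lifts of these vertices, and then match a convenient parametrisation of the cone with the formula in the statement. As a preliminary I will record a ``half-space'' model of $\Y_\Lambda$: the map $(w,t)\mapsto \tfrac1t\left(\begin{smallmatrix} t^2+|w|^2 & w\\ \bar w & 1\end{smallmatrix}\right)$ with $w\in\bbC_\Lambda$, $t>0$ takes values in $\Y_\Lambda$, the image matrix being $\dag$-hermitian of determinant $t^2>0$, and conversely $[Y]\in\Y_\Lambda$ with $Y_{22}\in\bbC_\Lambda^\times$ corresponds to $w=Y_{12}/Y_{22}$ and $t^2=\det(Y)/Y_{22}^{\,2}$. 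In these coordinates the vertices are $y_1=\infty$ (the limit $t\to\infty$) and the ideal points $y_2=0$, $y_3=1$, $y_4=z$ in $\{t=0\}$, where $z=-\tfrac{s_\Lambda(\beta)}{s_\Lambda(\alpha)}e^{\ell\gamma}$ and, by Corollary~\ref{cor:crossratio}, $1-z=-\tfrac{s_\Lambda(\gamma)}{s_\Lambda(\alpha)}e^{-\ell\beta}$; since $|e^{\ell\varphi}|^2=1$ this gives $|z|^2=\tfrac{s_\Lambda(\beta)^2}{s_\Lambda(\alpha)^2}\ge0$ and $|1-z|^2=\tfrac{s_\Lambda(\gamma)^2}{s_\Lambda(\alpha)^2}\ge0$.

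For the first claim I would form the cone $\Theta'=\{\sum_i a_i\,y_i\mid a_i\ge0,\ \sum_i a_i>0\}$. Its projectivisation is a solid $3$-simplex in $\RP^3$ with the four ideal points as vertices, whose flat faces $\{a_i=0\}$ lie in projective planes, hence in geodesic planes of $\Y_\Lambda$. The key computation is
\begin{align*}
\det\Big(\sum_i a_i y_i\Big)=a_1a_2+a_1a_3+a_1a_4+a_2a_3+a_2a_4|z|^2+a_3a_4|1-z|^2\ \ge\ 0,
\end{align*}
all six coefficients being non-negative by the previous paragraph; moreover the right-hand side vanishes only when at least two of the $a_i$ do, provided $|z|^2,|1-z|^2\neq0$, i.e.\ $s_\Lambda(\alpha)s_\Lambda(\beta)s_\Lambda(\gamma)\neq0$ (equivalently, the four vertices are not coplanar; for $\Lambda=1$ this is ensured by $0<|\alpha|,|\beta|,|\gamma|<\pi$). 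Hence $\Theta\subset\overline\Y_\Lambda$, $\Theta$ meets $\partial_\infty\Y_\Lambda$ exactly in its four vertices, its interior lies in $\Y_\Lambda$, and its six edges are the geodesics joining the $y_i$ pairwise, all spacelike since $z,1-z\in\bbC_\Lambda^\times$ and $z\neq1$. Thus $\Theta$ is an ideal tetrahedron for all such $\alpha,\beta,\gamma$.

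For the parametrisation I would read off from $[\sum_i a_i y_i]$ that the half-space coordinate $w=(a_3+a_4z)/(a_2+a_3+a_4)$ is independent of $a_1$, while $t^2=a_1/(a_2+a_3+a_4)+t_{\min}(w)^2$, where $t_{\min}(w)^2=\lambda_2\lambda_3+\lambda_2\lambda_4|z|^2+\lambda_3\lambda_4|1-z|^2$ and $(\lambda_2,\lambda_3,\lambda_4)$ are the barycentric coordinates of $w$ in the triangle with vertices $0,1,z$; in particular $t=t_{\min}(w)$ describes the face of $\Theta$ opposite $y_1$, i.e.\ the geodesic plane through $0,1,z$. So $\Theta=\{(w,t)\mid w\in\mathrm{hull}(0,1,z),\ t\ge t_{\min}(w)\}$. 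I then coordinatise the base triangle by polar coordinates centred at $y_4$: set $w=z(r,\theta):=z+r\,e^{\ell(\theta-\beta)}$. Using the formulas for $z$ and $1-z$ one checks that $\theta=-\alpha$ is the direction from $z$ towards $0$ and $\theta=0$ the direction from $z$ towards $1$, and that for $-\alpha\le\theta\le0$ this ray meets the opposite edge $\{\Im w=0,\ \Re w\in[0,1]\}$ at $r=r(\theta)$, determined by $\Im z(r,\theta)=\Im z+r\,s_\Lambda(\theta-\beta)=0$, i.e.\ $r(\theta)=\tfrac{s_\Lambda(\beta)}{s_\Lambda(\alpha)}\tfrac{s_\Lambda(\gamma)}{s_\Lambda(\theta-\beta)}$. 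Substituting $w=z(r,\theta)$ into the barycentric coordinates and into $t_{\min}(w)^2$, and simplifying with the identities~\eqref{eq:trigids} and $\alpha+\beta+\gamma=0$, gives $t_{\min}(z(r,\theta))^2=\tfrac{s_\Lambda(\theta-\gamma)}{s_\Lambda(\alpha)}\,r-r^2=t(r,\theta)^2$, which is exactly the claimed parametrisation.

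The main obstacle will be this last trigonometric simplification, together with the sign bookkeeping needed to see that, with $\alpha,\beta>0$ and $\gamma=-\alpha-\beta$, the quantities $r(\theta)$, $r$, $t(r,\theta)^2$ and the barycentric coordinates $\lambda_i$ are all non-negative over the stated ranges; the remaining steps are routine matrix computations. For $\Lambda=0$ the final identity can alternatively be obtained as the first-order term of a power-series expansion in $\Lambda$, as for the $\Lambda=0$ formulas in~\eqref{eq:geoddistconc}.
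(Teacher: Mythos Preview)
Your proposal is correct and follows essentially the same route as the paper's proof: realise $\Theta$ as the projectivised cone over the four lifts, verify $\det(\sum a_iy_i)\ge0$ (the paper asserts this without writing the formula, so your explicit expression $a_1a_2+a_1a_3+a_1a_4+a_2a_3+a_2a_4|z|^2+a_3a_4|1-z|^2$ is a genuine improvement), pass to the half-space model $(w,t)$, and parametrise the base triangle by the polar map $w=z+re^{\ell(\theta-\beta)}$ centred at $y_4$. The paper obtains $t(r,\theta)$ by setting the coefficient of $y_1$ to zero and comparing with the half-space form, whereas you express $t_{\min}^2$ via the barycentric formula $\lambda_2\lambda_3+\lambda_2\lambda_4|z|^2+\lambda_3\lambda_4|1-z|^2$; these are the same computation organised differently. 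One small point worth making explicit in your write-up (neither you nor the paper does): for $\Lambda\le0$ the map $(r,\theta)\mapsto re^{\ell(\theta-\beta)}$ is not ordinary polar coordinates, so you should note that $w-z=\lambda_2(-z)+\lambda_3(1-z)$ is a non-negative combination of two elements with $|\cdot|^2>0$ and positive real part, hence lies in the image of the exponential parametrisation.
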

  
  \begin{proof}
  Let $y_1,y_2,y_3,y_4\in\Y_\Lambda$ be given as in Proposition \ref{prop:idealtet}, and choose lifts $y_1',y_2',y_3',y_4'\in\R^4\subset\Mat(2,\bbC_\Lambda)$. Up to isometries (possibly reversing orientation), we can choose 
  \begin{align}\label{eq:yprimedef}
  &y'_1=\left(\begin{matrix} 1 & 0 \cr 0 & 0\end{matrix}\right),
  \qquad
  y'_2=\left(\begin{matrix} 0 & 0 \cr 0 & 1 \end{matrix}\right),
  \qquad
  y'_3=\left(\begin{matrix} 1 & 1 \cr 1 & 1 \end{matrix}\right),\\
 &y'_4=\left(\begin{matrix}\frac{s_\Lambda(\beta)^2}{s_\Lambda(\alpha)^2} & -\frac{s_\Lambda(\beta)}{s_\Lambda(\alpha)}e^{\ell\gamma} \cr -\frac{s_\Lambda(\beta)}{s_\Lambda(\alpha)}e^{-\ell\gamma} & 1 \end{matrix}\right), \qquad \alpha,\beta>0.\nonumber
  \end{align}
  
  We consider the convex cone in $\R^4$ spanned by lifts of the vertices $y_i\in \Y_\Lambda\subset\RP^3$ to vectors $y'_i\in\R^4$. This takes the form
  \begin{align} \label{eq:tetformula}
I'=\Big\{y'=\sum_{i=1}^4 b_i y'_i\mid b_i\geq 0, \sum_{i=1}^4 b_i\neq 0\Big\}.
  \end{align}
  This projects to a convex tetrahedron in $\Y_\Lambda$ if and only if $\langle y',y'\rangle_\Lambda<0$ for all $y'\in I'$, and this condition is satisfied for all $\alpha,\beta>0$ and $\gamma=-\alpha-\beta$.

  Any point $y\in \Y_\Lambda$ that is connected to $y_1$ by a spacelike geodesic can be parametrized as
  \begin{align}\label{eq:horzykpar}
  y(t,z)=\frac{1}{t}\left(\begin{matrix} t^2+|z|^2 & z \cr \bar z & 1 \end{matrix}\right),\qquad\text{with}\qquad t>0, \; z\in \bbC_\Lambda.
  \end{align}
  The points on $\partial_\infty \Y_\Lambda$ that are connected to $y_1$ by a spacelike geodesic are obtained from \eqref{eq:horzykpar} as the limit  $t\to 0$. 
  Note also that for all $z\in\bbC_\Lambda$, the map
  $g_z:\R\to \Y_\Lambda$,  $t\mapsto y(e^s, z)$ is a spacelike geodesic in $\Y_\Lambda$, parametrized by arc length and with $g_z(\infty)=y_1$. 
  This follows because  $g_z$ parametrizes the intersection of the image of a plane in $\R^4$ under the map \eqref{eq:ymatrixrp} with the set of matrices of unit determinant and because  $d(g_z(s), g_z(s'))={|s-s'|}$ by Proposition \ref{prop:geodesdist}.  
  Hence, we can view the sets
  \begin{align*}
  H_t(y_1)=\Big\{y(t,z)\mid z\in \bbC_\Lambda\Big\},
  \end{align*}
  for fixed $t>0$ as
   generalized horocycles based at $y_1\in \partial_\infty \Y_\Lambda$. For $\Lambda=1$, they coincide with the usual horocycles in $\bbH^3$.
  
  The edge geodesic through $y_1$ and $y_j$ is obtained by setting $B_k=0$ for $k\notin\{1,j\}$ in \eqref{eq:tetformula}. 
  By comparing the resulting expression with \eqref{eq:horzykpar}, one finds that this geodesic intersects each horocycle $H_t(y_1)$ in a unique point  $y(t,z_j)$ with $z_j$ given by
  \begin{align*}
  z_2=0,\qquad z_3=1,\qquad z_4=-\frac{s_\Lambda(\beta)}{s_\Lambda(\alpha)}e^{\ell\gamma}.
  \end{align*}
  More generally, a comparison of the parametrizations \eqref{eq:tetformula} and \eqref{eq:horzykpar} shows that any geodesic through $y_1$ that intersects the ideal tetrahedron $I$ intersects each horocycle $H_t(y_1)$ in a unique point $y(t,z)$ with $z$ given by
  \begin{align}\label{eq:zpara}
  z(r,\theta)=re^{\ell(\theta-\beta)}-\frac{s_\Lambda(\beta)}{s_\Lambda(\alpha)}e^{\ell\gamma},
  \end{align}
  with
  \begin{align*}
  0\leq r\leq r(\theta)=\frac{s_\Lambda(\beta)}{s_\Lambda(\alpha)}\frac{s_\Lambda(\gamma)}{s_\Lambda(\theta-\beta)},\qquad\qquad -\alpha\leq\theta\leq0.
  \end{align*}
  The intersection point of the geodesic $g_{r,\theta}: \R\to \Y_\Lambda$, $s\mapsto y(e^s, z(r,\theta))$ with the face opposite the vertex $y_1$ is obtained by setting $B_1=0$ in \eqref{eq:tetformula}.  Parameterizing $z$ as in \eqref{eq:zpara} and comparing with \eqref{eq:horzykpar}, we find that this intersection point is given by 
  \begin{align*}
  e^s=t(r,\theta)=\Big(\frac{s_\Lambda(\theta-\gamma)}{s_\Lambda(\alpha)}r-r^2\Big)^{1/2}.
  \end{align*}
  Inserting formula \eqref{eq:zpara} into the parametrization \eqref{eq:horzykpar} then completes the proof. 
  \end{proof}
  
  The parameters $\alpha,\beta,\gamma$ in Propositions \ref{prop:idealtet} and  \ref{prop:idealpara} also have a geometrical interpretation, namely as generalized dihedral angles at the edges of the ideal tetrahedron. Here, our convention for the dihedral angles uses one exterior angle, namely the biggest dihedral angle $\alpha+\beta$,  and two interior angles, $\alpha$ and $\beta$.
  For $\Lambda=1$ these are the usual dihedral angles between the faces of an ideal hyperbolic tetrahedron, up to the fact that one of them is external and given by $\pi-\theta$, where $\theta$ the usual interior dihedral angle.
  For $\Lambda=-1$ they give a {\em Lorentzian} angle between its faces, and for $\Lambda=0$ they are the length of the unique translation along the degenerate direction that relates adjacent faces. Using the global parametrization in Proposition \ref{prop:idealpara}, we obtain the analogue of Corollary \ref{cor:edgelength}.
  
  \begin{corollary} \label{cor:diheds}
  An ideal tetrahedron $I$ is determined up to isometries by its generalized dihedral angles.  If $I$ is parametrized as in Proposition \ref{prop:idealpara}, its dihedral angles are  $\alpha$, $\beta$ and $\alpha+\beta$, with opposite edges having equal dihedral angles.
  \end{corollary}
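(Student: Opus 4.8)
The plan is to extract the dihedral angles from the horospherical cross-section already built into the parametrization of Proposition~\ref{prop:idealpara}, in direct analogy with the proof of Corollary~\ref{cor:edgelength}. By Proposition~\ref{prop:idealtet} and the discussion preceding the statement, every ideal tetrahedron is isometric to one in the standard form of Proposition~\ref{prop:idealpara}, and using the symmetries~\eqref{eq:permutmatrix} we may take $\alpha,\beta>0$ (with $0<\alpha+\beta<\pi$ when $\Lambda=1$); since the generalized cross-ratio of Definition~\ref{def:crossrat} is an isometry invariant, it suffices to work with this standard form.

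First I would record, from the proof of Proposition~\ref{prop:idealpara}, that the level sets $H_t(y_1)=\{y(t,z)\mid z\in\bbC_\Lambda\}$ are generalized horospheres based at $y_1$ on which the induced metric is the flat $\bbC_\Lambda$-metric: each curve $s\mapsto y(e^s,z)$ is a spacelike geodesic parametrized by arc length and asymptotic to $y_1$, so these form a geodesic congruence through $y_1$, and the statement on the induced metric is a short computation with~\eqref{eq:detmetric}--\eqref{eq:metric}. Consequently $\Theta\cap H_t(y_1)$ is a generalized triangle in the $\bbC_\Lambda$-plane with vertices the points $z_2=0$, $z_3=1$, $z_4=-\tfrac{s_\Lambda(\beta)}{s_\Lambda(\alpha)}e^{\ell\gamma}$ at which the edge geodesics $e_{12},e_{13},e_{14}$ meet $H_t(y_1)$ (computed in that proof), and the generalized angle of this triangle at the vertex lying on $e_{1j}$ is the dihedral angle of $\Theta$ at $e_{1j}$, because the two sides issuing from it lie on the two faces of $\Theta$ containing $e_{1j}$. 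The parametrization makes this transparent at the vertex $z_4$, where $z(r,\theta)-z_4=re^{\ell(\theta-\beta)}$ with $\theta$ ranging over an interval of length $\alpha$.

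I would then compute the three angles as arguments of ratios of differences of $z_2,z_3,z_4$ and simplify with Corollary~\ref{cor:crossratio}, in which $z=z_4$. The cross-ratios $\tfrac1{1-z}=-\tfrac{s_\Lambda(\alpha)}{s_\Lambda(\gamma)}e^{\ell\beta}$ and $\tfrac{z-1}{z}=-\tfrac{s_\Lambda(\gamma)}{s_\Lambda(\beta)}e^{\ell\alpha}$ are positive real multiples of $e^{\ell\beta}$ and $e^{\ell\alpha}$ (since $s_\Lambda(\alpha),s_\Lambda(\beta)>0$ and $s_\Lambda(\gamma)<0$), so the dihedral angles at the corresponding edges are $\beta$ and $\alpha$; the remaining cross-ratio $z=-\tfrac{s_\Lambda(\beta)}{s_\Lambda(\alpha)}e^{\ell\gamma}$ is a negative real multiple of $e^{\ell\gamma}$, and by the convention fixed before the corollary one assigns to that edge the exterior angle $-\gamma=\alpha+\beta$ (for $\Lambda=1$ this is $\pi$ minus the interior angle $\pi+\gamma$). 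This yields the three dihedral angles at $y_1$ as $\alpha,\beta,\alpha+\beta$. For equality of the dihedral angles on opposite edges I would exhibit isometries realizing the double transpositions of vertices: the M\"obius transformation $w\mapsto z/w$ lies in $\PGL^+(2,\bbC_\Lambda)$ since $z\in\bbC_\Lambda^\times$, it preserves the standard vertex set $\{\infty,0,1,z\}$ while exchanging $y_1\leftrightarrow y_2$ and $y_3\leftrightarrow y_4$, hence identifies the dihedral angles of $e_{13},e_{24}$ and of $e_{14},e_{23}$; likewise $w\mapsto\tfrac{z-w}{1-w}$ realizes the permutation $(13)(24)$ and identifies the dihedral angles of $e_{12}$ and $e_{34}$. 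Finally the multiset $\{\alpha,\beta,\alpha+\beta\}$ determines $\{\alpha,\beta\}$, hence $\gamma=-\alpha-\beta$ and the cross-ratio $z$ up to the action of~\eqref{eq:permutmatrix}, so Proposition~\ref{prop:idealtet} shows $\Theta$ is determined up to isometry by its dihedral angles.

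The delicate point will be the bookkeeping for $\Lambda\neq1$: making precise the generalized angle of a triangle in the $\bbC_\Lambda$-plane, checking it agrees with the dihedral angle measured by the bending parameter $2\,ct^{-1}_{\Lambda\sigma}(a/b)$ of Section~\ref{sec:planepar} and with the alternative descriptions given before the corollary (a Lorentzian angle for $\Lambda=-1$, a translation length for $\Lambda=0$), and the $\pi$-shift converting the interior angle at the longest edge into the exterior angle $\alpha+\beta$. Once these conventions are fixed the remaining computations are short and formally parallel to those of Section~\ref{subsec:genc}.
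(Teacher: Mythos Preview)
Your proposal is correct and follows essentially the route the paper intends. The paper gives no explicit proof here: it merely says the corollary follows from the parametrization of Proposition~\ref{prop:idealpara} in analogy with Corollary~\ref{cor:edgelength}, after identifying the parameters $\alpha,\beta,\gamma$ as generalized dihedral angles in the preceding discussion. Your use of the horospherical slice $H_t(y_1)$ from the proof of Proposition~\ref{prop:idealpara}, reading off the angles of the cross-sectional triangle via the cross-ratios of Corollary~\ref{cor:crossratio}, is exactly the kind of computation the paper's phrasing suggests.

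The one place you genuinely add something is the argument for equality of opposite dihedral angles: rather than running the horosphere construction at each vertex, you exhibit the double transpositions $(12)(34)$ and $(13)(24)$ as explicit elements of $\PGL^+(2,\bbC_\Lambda)$ (with determinants $-z$ and $z-1$, both units by the spacelike-edge hypothesis). This is cleaner and entirely in keeping with the paper's viewpoint; it is also implicit in Proposition~\ref{prop:geomchar}, where the shape parameters $z_{ij}$ are shown to satisfy $z_{ij}=z_{kl}$ for opposite edges. Your caveat about the $\Lambda\neq 1$ bookkeeping is well placed---the paper does not pin down the ``generalized angle'' any more precisely than you do, relying instead on the verbal descriptions (Lorentzian angle for $\Lambda=-1$, translation length for $\Lambda=0$) in the text before the corollary.
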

  
  \begin{figure}[t]
  \begin{center}
  \begin{tikzpicture}[scale=.5]
  \draw[color=black, line width=1pt] (0,0) circle (5);
  \draw[color=black, line width=1pt] (0,0) ellipse (5 and 2);
  \draw[color=black, line width=1pt] (-1.9,-1.9).. controls (-4.5,5.7) and (-2.5,5.7).. (1.9,1.9);
  \draw[color=black, line width=.5pt, ] (0,0)--(5,0);
  \draw[color=black, line width=.5pt,] (-1.05,4)--(.8,.8);
  \draw[color=black, ->, >=stealth, line width=1pt] (.2,0)--(1,.8);
  \draw[color=red, line width=1pt] (-1.9,-1.9)--(1.9,1.9); 
  \draw[color=red, line width=1pt] (-1.9,-1.9).. controls (0,0) and (4,0).. (5,0); 
  \draw[color=red, line width=1pt] (1.9,1.9).. controls (1,1) and (3,0).. (5,0); 
  \draw[color=red, line width=1pt] (-1.9,-1.9).. controls (0,0) and (-.5, 2).. (-1,4); 
  \draw[color=red, line width=1pt] (1.9,1.9).. controls (1,1) and (-.7,3.3).. (-1,4); 
  \draw[fill=black, color=black] (5,0) circle(.15);
  \draw[fill=black, color=black] (-1.9,-1.9) circle(.15);
  \draw[fill=black, color=black] (1.9,1.9) circle(.15);
  \draw[fill=black, color=black] (-1, 4) circle(.15);
  \draw[line width=1pt, color=black] (-3, -1.6).. controls (-2.9,-.9)..  (-2.25, -.8);
  \node at (-1.9,-1.9)[anchor=north east, color=black]{$y_1$};
  \node at (1.9,1.9)[anchor=south west, color=black]{$y_2$};
  \node at (5.1,0)[anchor=west,color=black]{$y_3$};
  \node at (-1,4)[anchor=south west, color=black]{$y_4$};
  \node at (-2.6,-.6)[anchor=east, color=black]{$\theta_{12}$};
  \node at (.7,-.1)[anchor=south west, color=black]{$\varphi_{12}$};
  \node at (-2,3)[color=black]{$f_3$};
  \node at (2.5,-1)[color=black]{$f_4$};
  \end{tikzpicture}
  \caption{ Exterior dihedral angle $\theta_{12}$ and shearing distance $\varphi_{12}$ in $\bbH^3$.}
  \end{center}
  \end{figure}
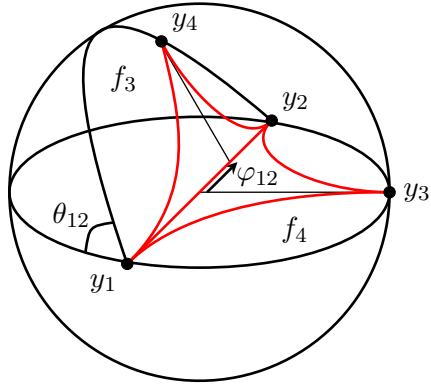
  
  Proposition \ref{prop:idealpara} and Corollary \ref{cor:diheds} show that the dihedral angles of an ideal tetrahedra play an analogous role to the edge lengths of lightlike tetrahedra. It is also possible to give a geometric interpretation for the ratios of their generalized sine functions as shearing distances along edges.

  We define the shearing distance along an edge $e_{ij}$ as the signed arc length $\varphi_{ij}$ between the orthogonal projections of $y_k$ and $y_l$ on $e_{ij}$, for all distinct $i,j,k,l\in \{1,2,3,4\}$. The sign of $\varphi_{ij}$ is taken positive (resp. negative) if the orientations of $e_{ij}$ induced (i) by the face opposite $y_k$ and (ii) by moving from $\pi_{ij}(y_k)$ to $\pi_{ij}(y_l)$ agree (resp. disagree), see Figure \ref{fig:signshearing}.

  \begin{proposition}\label{prop:sheardist} Let $I\subset \Y_\Lambda$ be an ideal tetrahedron with vertices $y_1,y_2,y_3,y_4$ parametrized as in Proposition  \ref{prop:idealtet}. Then the shearing distance $\varphi_{ij}$ at the edge $e_{ij}$ is given by
  $$
  2 \cosh(\varphi_{ij})=|z_{ij}|+|z_{ij}|^\inv,
  $$
   where $|z_{ij}|=|z_{ji}|$ and
  \begin{align*}
  &|z_{12}|=|z_{34}|=\left|\frac{s_\Lambda(\beta)}{s_\Lambda(\alpha)}\right|,
  & & |z_{31}|=|z_{34}|=\left| \frac{s_\Lambda(\alpha)}{s_\Lambda(\gamma)}\right|, 
  & & |z_{23}|=|z_{14}|=\left|\frac{s_\Lambda(\gamma)}{s_\Lambda(\beta)}\right|.
  \end{align*}
  \end{proposition}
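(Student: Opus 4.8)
The plan is to use the half–space style parametrization of Proposition~\ref{prop:idealpara}, compute one shearing distance explicitly, and deduce the rest from the symmetries \eqref{eq:permutmatrix} together with Corollary~\ref{cor:crossratio}. Since arc length is an isometry invariant and the labelling of the four vertices of $\Theta$ is arbitrary, it suffices to compute $\varphi_{12}$ for a tetrahedron put, by an isometry, into the normal form of Proposition~\ref{prop:idealtet}. In the coordinates \eqref{eq:horzykpar} the edge $e_{12}$ is then the ``vertical'' geodesic $s\mapsto y(e^{s},0)=\operatorname{diag}(e^{s},e^{-s})$ joining $y_{1}$ and $y_{2}$, parametrized by arc length, with unit tangent $\operatorname{diag}(1,-1)\in\y_{\Lambda}$ at $\mathbb{1}=y(1,0)$.

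Next I would locate the feet $\pi_{12}(y_{3})$ and $\pi_{12}(y_{4})$ of the perpendiculars from the ideal points $y_{3},y_{4}$ onto $e_{12}$. To test whether $y(t,0)$ is such a foot I would translate it to $\mathbb{1}$ by the isometry $y\mapsto\operatorname{diag}(t^{-1/2},t^{1/2})\rhd y$, which slides $e_{12}$ along itself, and require that the geodesic from $\mathbb{1}$ to the translated ideal point meet $e_{12}$ orthogonally at $\mathbb{1}$. By \eqref{eq:endpoints} that geodesic has direction the trace-free matrix $Y=\mathbb{1}-\tfrac{2}{\tr w}\,w$, where $w$ is the translated representative of the ideal point, and a short computation with the bilinear form \eqref{eq:detmetric} shows that $\langle Y,\operatorname{diag}(1,-1)\rangle_{\y_{\Lambda}}=0$ is equivalent to the two diagonal entries of $w$ being equal. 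Carrying this out gives $\pi_{12}(y_{3})=y(1,0)$ and $\pi_{12}(y_{4})=y(|z|,0)$, where $z=-\tfrac{s_{\Lambda}(\beta)}{s_{\Lambda}(\alpha)}e^{\ell\gamma}$ is the cross-ratio of the vertices and $|z|^{2}=z\bar z=\tfrac{s_{\Lambda}(\beta)^{2}}{s_{\Lambda}(\alpha)^{2}}$ by the identity $c_{\Lambda}(\gamma)^{2}+\Lambda s_{\Lambda}(\gamma)^{2}=1$ from \eqref{eq:trigids}; non-degeneracy of $\Theta$ guarantees $|z|^{2}>0$, so $|z|$ is a genuine positive real. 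Hence the two feet have arc-length parameters $s=0$ and $s=\log|z|$, so $\varphi_{12}=\pm\log|z_{12}|$ with $|z_{12}|=|z|$, the sign being the one fixed by the orientation convention of Figure~\ref{fig:signshearing} but irrelevant for the statement, and therefore $2\cosh(\varphi_{12})=|z_{12}|+|z_{12}|^{-1}$.

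Finally I would obtain the remaining five edges with no further computation. Relabelling the vertices so that the chosen edge becomes ``$e_{12}$'' leaves $\Theta$ unchanged but replaces the cross-ratio $z$ by one of $z,\tfrac{1}{1-z},\tfrac{z-1}{z}$ and their multiplicative inverses, which are precisely the six values listed in Corollary~\ref{cor:crossratio} and whose moduli are $|s_{\Lambda}(\beta)/s_{\Lambda}(\alpha)|$, $|s_{\Lambda}(\alpha)/s_{\Lambda}(\gamma)|$, $|s_{\Lambda}(\gamma)/s_{\Lambda}(\beta)|$; applying the computation of the previous paragraph in the relabelled normal form then yields $2\cosh(\varphi_{ij})=|z_{ij}|+|z_{ij}|^{-1}$ for every edge, and the equalities $|z_{ij}|=|z_{ji}|=|z_{kl}|$ for opposite edges are exactly those of Corollary~\ref{cor:crossratio}. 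Equivalently, one may realize the relevant relabellings as honest isometries of $\Theta$ by transporting the isometries of Proposition~\ref{eq:geomlightlike} (or their analogues for ideal tetrahedra) under duality; this is also the dual of Proposition~\ref{prop:lorentz_angle}.

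I expect the main obstacle to be the computation of the orthogonal projection of an ideal point onto the spacelike edge $e_{12}$ uniformly in $\Lambda$: for $\Lambda=1$ this is the classical foot of a perpendicular in $\bbH^{3}$, but for $\Lambda=-1$ the metric on $\Y_{\Lambda}=\AdS^{3}$ is Lorentzian and for $\Lambda=0$ it is degenerate, so one must check that ``orthogonal projection'' is well defined --- that a unique foot exists --- and that the half-space computation really produces it. Once the matrix model \eqref{eq:horzykpar} and the determinant form \eqref{eq:detmetric} are in play this reduces to the linear equation ``equal diagonal entries'', which has a unique positive solution in all three cases, so the only genuinely case-dependent checks are this uniqueness (using the formulas for arc length from Proposition~\ref{prop:geodesdist}) and the matching of the sign of $\varphi_{ij}$ with the figure.
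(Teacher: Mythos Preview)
Your argument is correct. The paper takes a closely related but more uniform route: for each edge it introduces the isometry $B_{ij}\in\PGL^+(2,\bbC_\Lambda)$ sending $(\infty,0,1)$ to $(y_i,y_j,y_k)$ (with $(y_i,y_j,y_k)$ positively ordered), observes that the two feet of the perpendiculars are the $B_{ij}$- and $B_{ji}$-images of $\mathbb 1$, and then reads off $2\cosh(\varphi_{ij})$ directly from the trace formula of Proposition~\ref{prop:geodesdist}, namely $|\tr(B_{ij}^{-1}B_{ji}(B_{ij}^{-1}B_{ji})^\dag)|$; the six moduli then drop out of the explicit matrices $B_{ij}$. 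You instead compute a single edge by hand in the half-space coordinates of Proposition~\ref{prop:idealpara}, reducing the foot-of-perpendicular condition to the linear equation ``equal diagonal entries'', and then transport the result to the remaining edges via the cross-ratio symmetries of Corollary~\ref{cor:crossratio}. The paper's version treats all six edges on the same footing at the cost of writing down the $B_{ij}$, while yours needs only one explicit projection and a symmetry argument; your worry about $\Lambda=0$ is resolved exactly as you say, since the orthogonality condition is the $\Lambda$-independent linear equation you identify.
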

  
  \begin{proof}  Denote by  $B_{ij}$  the unique isometry 
  with $B_{ij}\rhd\infty=y_i$, $B_{ij}\rhd 0=y_j$ and $B_{ij}\rhd 1=y_k$ from Proposition \ref{lem3pt}, where   $(y_i,y_j,y_k)$ is positively ordered with respect to the orientation of $I$. 
  Then the orthogonal projection of $y_k$ on $e_{ij}$ is given by $\pi_{ij}(y_k)=B_{ij}^{\inv }\rhd \mathbb 1$ and the orthogonal projection of the remaining vertex $y_l$ by
  $\pi_{ij}(y_l)=B_{ji}^\inv\rhd\mathbb 1$. 
  Suppose $B_{ij}$ is normalized with $|\det(B_{ij})|=1$. Then by Proposition \ref{prop:geodesdist}   the shearing distance $\varphi_{ij}$ satisfies
  \begin{align*}
  2\cosh(\varphi_{ij})=|\tr(B_{ij}^\inv B_{ji}(B_{ij}^\inv B_{ji})^\dagger)|.
  \end{align*}
  The claim then follows by computing the matrices $B_{ij}$ from the parametrization of the vertices in Proposition  \ref{prop:idealtet}. 
  \end{proof}
  
  \begin{figure}[t]
  \begin{center}
  \begin{tikzpicture}[scale=.6]
  \draw[line width=1pt, color=black] (0,-3)--(0,3);
  \draw[line width=.5pt, color=black, style=dashed] (0,1)--(-4,1);
  \draw[line width=.5pt, color=black, style=dashed] (0,-1)--(4,-1);
  \draw[line width=.7pt, ->,>=stealth, color=red] (.3,-1)--(.3,1);
  \node at (0,3)[anchor=south]{$y_j$};
  \node at (0,-3)[anchor=north]{$y_i$};
  \node at (4,-1)[anchor=west]{$y_l$};
  \node at (-4,1)[anchor=east]{$y_k$};
  \node at (0,0)[anchor=east]{$e_{ij}$};
  \node at (1.3,.1){$f_k$};
  \node at (-1.8,.1){$f_l$};
  \draw[color=black, line width=1pt] (0,-3)--(-4,1); 
  \draw[color=black, line width=1pt] (0,3)--(4,-1); 
  \draw[color=black, line width=1pt] (0,3)--(-4,1); 
  \draw[color=black, line width=1pt] (0,-3)--(4,-1); 
  \draw [red,thick,domain=0:270, <-, >=stealth] plot ({.6*cos(\x)+1.3}, {.6*sin(\x)+.1});
  \draw [black,thick,domain=0:270, <-, >=stealth] plot ({.6*cos(\x)-1.8}, {.6*sin(\x)+.1});
  \node at (0,-4.5){$\varphi_{ij}>0$};
  \begin{scope}[shift={(11.5,0)}]
  \draw[line width=1pt, color=black] (0,-3)--(0,3);
  \draw[line width=.5pt, color=black, style=dashed] (0,-1)--(-4,-1);
  \draw[line width=.5pt, color=black, style=dashed] (0,1)--(4,1);
  \draw[line width=.7pt, <-,>=stealth, color=red] (.3,-1)--(.3,1);
  \node at (0,3)[anchor=south]{$y_j$};
  \node at (0,-3)[anchor=north]{$y_i$};
  \node at (4,1)[anchor=west]{$y_l$};
  \node at (-4,-1)[anchor=east]{$y_k$};
  \node at (0,0)[anchor=east]{$e_{ij}$};
  \node at (1.3,.1){$f_k$};
  \node at (-1.8,.1){$f_l$};
  \draw[color=black, line width=1pt] (0,-3)--(-4,-1); 
  \draw[color=black, line width=1pt] (0,3)--(4,1); 
  \draw[color=black, line width=1pt] (0,3)--(-4,-1); 
  \draw[color=black, line width=1pt] (0,-3)--(4,1); 
  \draw [red,thick,domain=0:270, <-, >=stealth] plot ({.6*cos(\x)+1.3}, {.6*sin(\x)+.1});
  \draw [black,thick,domain=0:270, <-, >=stealth] plot ({.6*cos(\x)-1.8}, {.6*sin(\x)+.1});
  \node at (0,-4.5){$\varphi_{ij}<0$};
  \end{scope}
  \end{tikzpicture}
  \end{center}
  \caption{Sign conventions for the shearing distance $\varphi_{ij}$. 
  }
  \label{fig:signshearing}
  \end{figure}
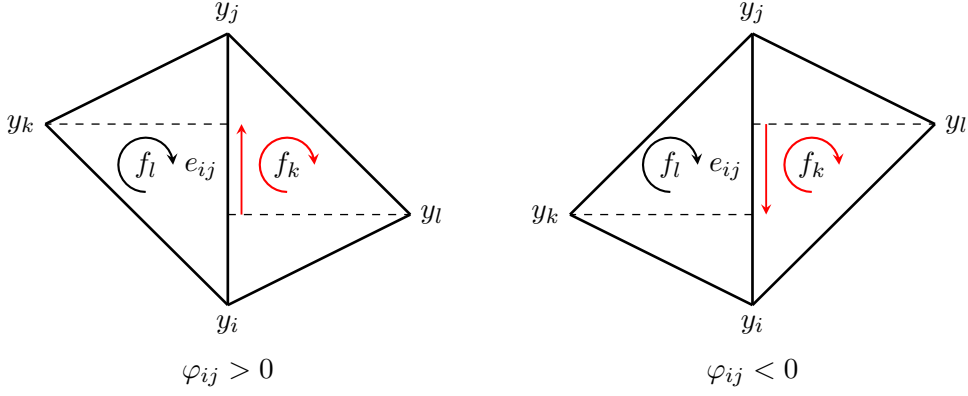
  
  As in the case of lightlike tetrahedra, the cross-ratios or shape parameters of a generalized ideal tetrahedron can also be characterized in terms of its symmetries.

  \begin{proposition}\label{prop:geomchar} Let $I\subset\Y_\Lambda$ be an ideal tetrahedron. Denote by $y_{ij}$ the geodesic segment between $y_i$ and $y_j$, oriented from $y_i$ to $y_j$. There exists a unique isometry $T_{ij}\in\PGL^+(2,\bbC_\Lambda)$ that stabilizes $y_{ij}$, together with its orientation, and maps one opposite vertex to the other. With the parametrization of Proposition \ref{prop:idealtet}, we have
  \begin{align*}
  T_{ij}=B_{ij}\begin{pmatrix} z_{ij} & 0 \cr 0 &1 \end{pmatrix}B_{ij}{}^\inv,
  \end{align*}
  where $z_{ij}=z_{ji}$ is given by
  \begin{align*}
  &z_{12}=z_{34}=-\frac{s_\Lambda(\beta)}{s_\Lambda(\alpha)}e^{\ell\gamma},
  & & z_{31}=z_{34}=-\frac{s_\Lambda(\alpha)}{s_\Lambda(\gamma)}e^{\ell\beta},
  & & z_{23}=z_{14}=-\frac{s_\Lambda(\gamma)}{s_\Lambda(\beta)}e^{\ell\alpha},
  \end{align*}
   and where $B_{ij}\in\PGL^+(2,\bbC_\Lambda)$  
    maps $\infty,0,1\in\partial_\infty\Y_\Lambda$ to  $y_i,y_j,y_k$,  respectively, with the order of $(y_i,y_j,y_k)$ induced by the orientation of $I$.
  \end{proposition}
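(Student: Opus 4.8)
The plan is to normalize the configuration with $B_{ij}$ and then identify the subgroup of $\PGL^+(2,\bbC_\Lambda)$ stabilizing the oriented edge geodesic $y_{ij}$ with the diagonal torus. Let $y_k$ be the third vertex so that $(y_i,y_j,y_k)$ is positively ordered with respect to the orientation of $\Theta$; these three points are pairwise joined by spacelike geodesics, so Proposition~\ref{lem3pt} produces a unique $B_{ij}\in\PGL^+(2,\bbC_\Lambda)$ with $B_{ij}\rhd\infty=y_i$, $B_{ij}\rhd 0=y_j$ and $B_{ij}\rhd 1=y_k$. Conjugating the whole situation by $B_{ij}^\inv$, I may assume $y_i=\infty$, $y_j=0$ and $y_k=1$; by the proof of Proposition~\ref{prop:idealtet} the remaining vertex is then $y_l=[z:1]$ with $z=\mathrm{cr}(y_i,y_j,y_k,y_l)\in\bbC_\Lambda^\times\setminus\{1\}$, and $y_{ij}$ is the spacelike geodesic of $\Y_\Lambda$ with ideal endpoints $\infty$ and $0$, oriented from $\infty$ to $0$.

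First I would describe the isometries stabilizing $y_{ij}$ together with its orientation. Such an isometry fixes each ideal endpoint of $y_{ij}$, since the endpoints are the limits $t\to\pm\infty$ of the parametrization~\eqref{eq:ypar}; conversely, an isometry fixing both ideal endpoints stabilizes $y_{ij}$, because a spacelike geodesic of $\Y_\Lambda$ is determined by its pair of ideal points (the dual of the statement in Lemma~\ref{lem:lplaneint} that a spacelike geodesic of $\X_\Lambda$ is cut out by a unique pair of lightlike planes). Writing the boundary action on $\bbC_\Lambda\rmP^1$ in matrix form, a representative $\left(\begin{smallmatrix}a&b\\c&d\end{smallmatrix}\right)$ fixing $\infty=[1:0]$ must have $c=0$, and then fixing $0=[0:1]$ forces $b=0$. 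Hence the stabilizer of the oriented geodesic is exactly $\{[\mathrm{diag}(\lambda,1)]:\lambda\in\bbC_\Lambda^\times\}$, a two-parameter subgroup.

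Then I would impose the last requirement, that the isometry maps $y_k$ to $y_l$. Since $\mathrm{diag}(\lambda,1)\rhd[1:1]=[\lambda:1]$, this holds if and only if $\lambda=z$, which gives simultaneously existence and uniqueness of $T_{ij}$. Undoing the conjugation,
\[
T_{ij}=B_{ij}\begin{pmatrix}z_{ij}&0\\0&1\end{pmatrix}B_{ij}^\inv,\qquad z_{ij}=\mathrm{cr}(y_i,y_j,y_k,y_l),
\]
and by construction $T_{ij}$ fixes $y_i,y_j$, hence stabilizes $y_{ij}$ with its orientation, and sends $y_k$ to $y_l$. The explicit value $z_{12}=\mathrm{cr}(y_1,y_2,y_3,y_4)=-s_\Lambda(\beta)/s_\Lambda(\alpha)\,e^{\ell\gamma}$ is then supplied by Corollary~\ref{cor:crossratio}; the identity $\mathrm{cr}(a,b,c,d)=\mathrm{cr}(c,d,a,b)$ shows that opposite edges carry equal parameters, and the remaining two values $1/(1-z)$ and $(z-1)/z$ are the images of $z$ under the order-six group~\eqref{eq:permutmatrix} permuting $y_1,y_2,y_3$.

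No step presents a serious obstacle; the one requiring care is the orientation bookkeeping in this last step, namely checking that the convention ``$(y_i,y_j,y_k)$ positively ordered and $T_{ij}\colon y_k\mapsto y_l$'' distributes the three values $z$, $1/(1-z)$, $(z-1)/z$ over the correct pairs of opposite edges rather than mixing in their inverses. Everything else is linear algebra over $\bbC_\Lambda$, and the argument parallels the proof of Proposition~\ref{eq:geomlightlike} under the duality between ideal and lightlike tetrahedra.
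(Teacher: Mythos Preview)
Your proposal is correct and follows essentially the same approach as the paper: conjugate by $B_{ij}$ to standard position, observe that the diagonal matrix $\mathrm{diag}(z_{ij},1)$ fixes $\infty,0$ and sends $1\mapsto z_{ij}$, then read off the explicit values from the cross-ratio computation. Your argument is slightly more explicit than the paper's about uniqueness (via the characterization of the oriented-edge stabilizer as the diagonal torus), while the paper is more explicit about verifying $z_{ij}=z_{kl}$ for opposite edges by directly computing the transition maps $B_{kl}^{-1}\circ B_{ij}$ rather than invoking the symmetry $\mathrm{cr}(a,b,c,d)=\mathrm{cr}(c,d,a,b)$, which in the generalized setting of Definition~\ref{def:crossrat} is not a priori available and must be checked.
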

  \begin{proof}
  Given an isometry $B_{ij}$
  with $B_{ij}\rhd\infty=y_i$, $B_{ij}\rhd 0=y_j$ and $B_{ij}\rhd 1=y_k$, define $z_{ij}=B_{ij}^{\inv}\rhd y_l\in\bbC_\Lambda \rmP^1$ as the preimage of the remaining vertex $y_l$. The projective matrix
  \begin{align*}
  \begin{pmatrix} z_{ij} & 0 \cr 0 & 1 \end{pmatrix}\in\PGL^+(2,\bbC_\Lambda)\end{align*}
  then stabilizes both $\infty$ and $0$ in $\bbC_\Lambda P^1$ and maps $1$ to $z_{ij}$.  It follows that the isometry 
  \begin{align*}
  T_{ij}=B_{ij}\begin{pmatrix} z_{ij} & 0 \cr 0 &1 \end{pmatrix}B_{ij}{}^\inv,
  \end{align*}
  stabilizes $y_i$ and $y_j$ and maps $y_k$ to $y_l$.
  
  From Proposition \ref{prop:idealtet} we obtain
  \begin{align*}
  z_{12}=-\frac{s_\Lambda(\beta)}{s_\Lambda(\alpha)}e^{\ell\gamma}.
  \end{align*}
  The other parameters $z_{ij}$ are obtained by computing the isometries $B_{kl}^{-1}\circ B_{ij}$, for instance
  \begin{align*}
   B_{31}^\inv\circ B_{12}:\left\{
    \begin{aligned}
          \infty \,&\, \mapsto \;y_1 \;\mapsto \;  0,  \\
          0 \,&\, \mapsto \;y_2 \;\mapsto \; 1,  \\
          1 \,&\, \mapsto \; y_3 \;\mapsto \; \infty,  \\
          z_{12} \,&\, \mapsto \; y_4  \;\mapsto \; z_{23}  = \tfrac{1}{1-z_{12}}, \\
      \end{aligned}
   \right.
   \qquad
   B_{34}^\inv\circ B_{12}:\left\{
    \begin{aligned}
          \infty \,&\, \mapsto \; y_1 \;\mapsto \;1,  \\
          0 \,&\, \mapsto \; y_2  \;\mapsto \;z_{34} = z_{12}, \\
          1 \,&\, \mapsto \; y_3  \;\mapsto \;\infty, & \\
          z_{12} \,&\, \mapsto \; y_4  \;\mapsto \; 0. &  \\
      \end{aligned}
   \right.
  \end{align*}
  The claim then follows from the identity  $\alpha+\beta+\gamma=0$.
  \end{proof}

  Corollary \ref{cor:diheds}  and Proposition \ref{prop:sheardist} show that the cross-ratios of an ideal tetrahedron from Corollary \ref{cor:crossratio} have a direct geometric interpretation that generalizes the one of ideal tetrahedra in $\bbH^3$.
  Their arguments are generalized dihedral angles between faces, and their moduli shearing distance along edges. 
  
  They are the counterparts of Corollary \ref{cor:edgelength} and Proposition \ref{prop:lorentz_angle} for lightlike tetrahedra in $\X_\Lambda$, which state that the arguments of their shape parameters determine the edge lengths and their moduli the Lorentzian angle between the internal planes of a lightlike tetrahedron.
  Proposition \ref{prop:geomchar}, which characterizes the cross-ratios of an ideal tetrahedron in terms of its symmetries, is the counterpart of Proposition \ref{eq:geomlightlike} for lightlike tetrahedra.
  
  We have seen in Proposition \ref{prop:tetpara} that given parameters $\alpha,\beta,\gamma$, satisfying $\alpha+\beta+\gamma=0$, there exists a lightlike tetrahedron with edge lengths $|\alpha|,|\beta|,|\gamma|$, unique up to isometries. Similarly, under the same assumptions, Proposition \ref{prop:idealtet} proves the existence of a generalized ideal tetrahedron with generalized dihedral angles $|\alpha|,|\beta|,|\gamma|$, again unique up to isometries. The following theorem gives a geometric interpretation for this correspondence between lightlike and ideal tetrahedra in terms of the projective duality of Sections \ref{sec:projdual1} and \ref{subsec:ideldual}.
  
Note, however, that this correspondence is \emph{not} given by the duality between convex sets in $\X_\Lambda$ and $\Y_\Lambda$ from \cite{Fillastre-Seppi} discussed in Section \ref{sec:projdual1}. As explained in Section \ref{sec:projdual1}, the dual of a convex set in $\X_\Lambda$ or $\Y_\Lambda$ can be characterized as the set of spacelike geodesic planes that do not intersect the convex set. Here, instead, we characterize lightlike tetrahedra in $\X_\Lambda$ as the sets of spacelike geodesic planes in $\Y_\Lambda$ that \emph{do intersect} ideal tetrahedra in two specified pairs of opposite edges. Conversely, ideal tetrahedra in $\Y_\Lambda$ correspond to spacelike geodesic planes in $\X_\Lambda$ that intersect a lightlike tetrahedron in all pairs of opposite edges except its longest edge pair.

\begin{theorem}\label{prop:dualprop} The projective duality from Section \ref{sec:projdual1} identifies 
 a   lightlike tetrahedron in $\X_\Lambda$ with the set of spacelike planes in $\Y_\Lambda$ that intersect an ideal tetrahedron along two pairs of opposite edges. It identifies 
an ideal tetrahedron in $\Y_\Lambda$ with the set of spacelike planes in $\X_\Lambda$ that intersect a lightlike tetrahedron along its shortest edges.
\end{theorem}

\begin{proof}
  This follows from the parameterization of lightlike tetrahedra and ideal tetrahedra as projections of the convex cones 
  \begin{align*}
    L'=\Big\{x'=\sum_{i=1}^4 a_ix_i'\mid a_i\geq 0,\;\sum_{i=1}^4 a_i\neq 0\Big\},
    &&
    I'=\Big\{y'=\sum_{i=1}^4 b_i y'_i\mid b_i\geq 0, \sum_{i=1}^4 b_i\neq 0\Big\},
  \end{align*}
  with  the vertices $x'_i$ and $y'_j$ given by \eqref{eq:xprimedef} and \eqref{eq:yprimedef}.
 By assumption, we have $\alpha+\beta+\gamma = 0$ with $\alpha,\beta>0$ and $\alpha+\beta<\pi$ for $\Lambda = 1$. This implies $\langle x'_i,y'_j\rangle=0$ for $i\neq j$ and
  \begin{align}\label{eq:scalar}
    \langle x'_1,y'_1\rangle = -s_\Lambda(\alpha) < 0,
    & &
    \langle x'_2,y'_2\rangle = -s_\Lambda(\beta) < 0,
    \cr 
    \langle x'_3,y'_3\rangle = -s_\Lambda(\gamma) > 0,
    & &
    \langle x'_4,y'_4\rangle = -s_\Lambda(\gamma) > 0.
  \end{align} 
In particular, the spacelike plane in $\Y_\Lambda$ dual to any point in the lightlike tetrahedron must intersect the ideal tetrahedron: given any $x'\in L'$ there exists $y'\in I'$ such that $\langle x',y'\rangle = 0$.
Such spacelike planes, however, cannot not intersect the pair of edges $e_{12}$ and $e_{34}$ in $I'$:  If $\{i,j\}=\{1,2\}$ or $\{i,j\}=\{3,4\}$ and
  $y' = b_i y'_i + b_j y'_j $  with $b_i,b_j\geq 0$ and $b_i+b_j\neq 0$ we have $\langle x', y'\rangle < 0$ for all $x'\in L'$.  For all other combinations of  $i$ and $j$, there are $b_i,b_j\geq 0$ and $b_i+b_j\neq 0$ for which $\langle x',y'\rangle=0$. By  Proposition \ref{prop:tetpara} and Corollary \ref{cor:edgelength} the edges $e_{12}$ and $e_{34}$ are the longest edges of the lightlike tetrahedron. 
The proof of the second statement is analogous.
\end{proof}

Although this correspondence is \emph{not the duality of convex sets} from Section \ref{sec:projdual1}, it still identities faces and vertices of a lightlike tetrahedron with faces and vertices of a lightlike tetrahedron.  Geodesics through two vertices or on two faces of a  lightlike tetrahedron  are identified with geodesics on the two dual faces or though the two dual vertices, respectively. In this sense, lightlike tetrahedra in $\X_\Lambda$ and ideal tetrahedra in $\Y_\Lambda$ are projectively dual.

  \section{Volumes of lightlike and ideal tetrahedra}
  In this section we derive formulas for the volumes of lightlike tetrahedra in $\X_\Lambda$ and of generalized ideal tetrahedra in $\Y_\Lambda$ as functions of their edge lengths and dihedral angles, respectively.  These formulas are obtained by direct integration of the volume forms on $\X_\Lambda$ and on $\Y_\Lambda$, defined here uniquely up to global rescaling as the $\PGL^+(2,\bbC_\Lambda)$-invariant 3-forms on each space.

  \subsection{Volumes of ideal tetrahedra} 
  
  We start with the computation of volumes of generalized ideal tetrahedra in $\Y_\Lambda$. This is technically much simpler to compute and serves as a guide for the computation of the lightlike volume below. For $\Lambda=1$, it includes the Milnor-Lobachevsky formula \cite{Mi}, which gives the volume of a hyperbolic ideal tetrahedron $I$ as
  \begin{align}\label{eq:volstandard}
  \vol(I)=\frac 1 2 \Big(\Cl(2\alpha)+\Cl(2\beta)+\Cl(2\gamma)\Big)=\Lob(\alpha)+\Lob(\beta)+\Lob(\gamma).
  \end{align}
  Here $\alpha,\beta$ and $\gamma=\pi-(\alpha+\beta)$ are the interior dihedral angles of the tetrahedron, 
  $\Cl:\R\to \R$ is the Clausen function of order two and $\Lob:\R\to \R$ the closely related Lobachevsky function.
  
  Note that taking the exterior dihedral angle for $\gamma$ instead and setting $\gamma=-(\alpha+\beta)$ in \eqref{eq:volstandard} gives the same result due to periodicity. Hence,  \eqref{eq:volstandard} remains valid for our conventions on dihedral angles, where $\gamma=-(\alpha+\beta)$ (see Proposition \ref{prop:geomchar}). 
  
  We will now show that the volume formulas for generalized ideal tetrahedra $I\subset\Y_\Lambda$ 
  can be computed for all values of $\Lambda$ simultaneously and are simple generalizations of formula \eqref{eq:volstandard}, in which $\Lambda$ appears as a deformation parameter. 
  
  The standard computation of the volume for an ideal hyperbolic tetrahedron, due to Milnor \cite{Mi} and based on the work by Lobachevsky, proceeds by subdividing the ideal tetrahedron in three sub-tetrahedra with a higher degree of symmetry. This method can be extended to generalized ideal tetrahedra. 
  However, for simplicity and to exhibit the analogies with the computation of the volume of lightlike tetrahedra in $\X_\Lambda$, we compute the volume
  by a different method that does not require a subdivision, namely with the parametrization from Proposition \ref{prop:idealpara}.

  \begin{theorem}\label{prop:volideal}
  The volume of an ideal tetrahedron $I\subset\Y_\Lambda$  is given by
  \begin{align*}
  \vol(I)=\frac{1}{2}\Big(\Cl_\Lambda(2\alpha)+\Cl_\Lambda(2\beta)+\Cl_\Lambda(2\gamma)\Big),
  \end{align*}
  where $\alpha,\beta$ and $\gamma=-(\alpha+\beta)$ are its generalized dihedral angles from Proposition \ref{prop:idealtet} and $\Cl_\Lambda$ is the generalized Clausen function defined by
  \begin{align*}
  \Cl_\Lambda(\alpha):=-\int_0^\alpha d\theta\log\left |2s_\Lambda(\tfrac \theta 2)\right|.
  \end{align*}
  \end{theorem}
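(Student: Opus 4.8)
The plan is to integrate the $\PGL^+(2,\bbC_\Lambda)$-invariant $3$-form directly over the explicit fundamental region for $\Theta$ furnished by Proposition~\ref{prop:idealpara}, which reduces $\vol(\Theta)$ to three nested one-variable integrals. First I would pin down the invariant $3$-form in the horocyclic coordinates $(t,z)$, $z=x+\ell y\in\bbC_\Lambda$, in which the points of $\Y_\Lambda$ connected to $y_1$ by a spacelike geodesic are written $y(t,z)$ as in \eqref{eq:horzykpar}. From the action \eqref{eq:actiony} one checks that the unipotent upper-triangular matrices act by the translations $(t,x,y)\mapsto(t,x+a,y+b)$, so the invariant $3$-form has the shape $f(t)\,dx\wedge dy\wedge dt$; and that $\mathrm{diag}(\lambda,1)$, $\lambda>0$, acts by $(t,x,y)\mapsto(\lambda t,\lambda x,\lambda y)$, forcing $f(t)=C\,t^{-3}$. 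Normalizing so that for $\Lambda=1$ one recovers the standard volume form of $\bbH^3$, and taking the same normalization for all $\Lambda$ (for $\Lambda=0$ there is no Riemannian volume, but the invariant $3$-form still exists), gives
\begin{align*}
\omega=\frac{1}{t^{3}}\,dx\wedge dy\wedge dt.
\end{align*}

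I would then substitute the parametrization $z=z(r,\theta)$ of Proposition~\ref{prop:idealpara}. Expanding $z(r,\theta)$ into real and imaginary parts and using $\dot c_\Lambda=-\Lambda s_\Lambda$, $\dot s_\Lambda=c_\Lambda$ and $c_\Lambda^{2}+\Lambda s_\Lambda^{2}=1$ from \eqref{eq:trigids}, \eqref{eq:trigdev}, the Jacobian collapses to $dx\wedge dy=r\,dr\wedge d\theta$. Integrating $\omega$ over $t\ge t(r,\theta)$ produces $\tfrac12\,t(r,\theta)^{-2}\,dx\wedge dy$, and since $t(r,\theta)^{2}=\big(\tfrac{s_\Lambda(\theta-\gamma)}{s_\Lambda(\alpha)}-r\big)r$ the factor $r$ from the Jacobian cancels, so the remaining $r$-integral is elementary and yields
\begin{align*}
\vol(\Theta)=\frac12\int_{-\alpha}^{0}\log\left|\frac{s_\Lambda(\theta-\gamma)/s_\Lambda(\alpha)}{s_\Lambda(\theta-\gamma)/s_\Lambda(\alpha)-r(\theta)}\right|\,d\theta.
\end{align*}

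The only step that is not bookkeeping is a trigonometric simplification. Using the product-to-sum identities obtained from \eqref{eq:trigids} together with $\alpha+\beta+\gamma=0$, I would establish
$s_\Lambda(\theta-\gamma)\,s_\Lambda(\theta-\beta)-s_\Lambda(\beta)\,s_\Lambda(\gamma)=s_\Lambda(\theta)\,s_\Lambda(\theta+\alpha)$
(directly for $\Lambda\ne0$ from $2\Lambda\,s_\Lambda(a)s_\Lambda(b)=c_\Lambda(a-b)-c_\Lambda(a+b)$, then at $\Lambda=0$ by continuity or an elementary polynomial check). Since $r(\theta)=\tfrac{s_\Lambda(\beta)s_\Lambda(\gamma)}{s_\Lambda(\alpha)s_\Lambda(\theta-\beta)}$ and $\theta-\gamma=\theta+\alpha+\beta$, this collapses the argument of the logarithm to a ratio of four sine factors, so that
\begin{align*}
\vol(\Theta)=\frac12\int_{-\alpha}^{0}\log\left|\frac{s_\Lambda(\theta+\alpha+\beta)\,s_\Lambda(\theta-\beta)}{s_\Lambda(\theta)\,s_\Lambda(\theta+\alpha)}\right|\,d\theta.
\end{align*}
To finish, set $L(x):=\int_{0}^{x}\log|s_\Lambda(\phi)|\,d\phi$; the substitution $\theta\mapsto2\phi$ in the definition of $\Cl_\Lambda$ gives $L(x)=-\tfrac12\Cl_\Lambda(2x)-x\log2$, and both $L$ and $\Cl_\Lambda$ are odd. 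Writing each of the four terms as $\int_{-\alpha}^{0}\log|s_\Lambda(\theta+c)|\,d\theta=L(c)-L(c-\alpha)$ and summing yields $\vol(\Theta)=L(\alpha+\beta)-L(\alpha)-L(\beta)$; here the $x\log2$ contributions cancel, $-\Cl_\Lambda(2(\alpha+\beta))=\Cl_\Lambda(2\gamma)$ by oddness, and one is left with exactly $\tfrac12\big(\Cl_\Lambda(2\alpha)+\Cl_\Lambda(2\beta)+\Cl_\Lambda(2\gamma)\big)$.

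I expect the main obstacle to be the first step: fixing the invariant $3$-form uniformly in $\Lambda$, in particular making sense of the normalization at $\Lambda=0$, where there is no Riemannian volume and one must work with the invariant $3$-form itself. After that the computation is routine: the $t$- and $r$-integrations are immediate, the trigonometric identity of the third step is the one small idea required, and the passage to $\Cl_\Lambda$ is pure bookkeeping (oddness of $\Cl_\Lambda$ and the cancellation of the $\log2$ terms). A minor point to check along the way is that the integrand of the final $\theta$-integral has only integrable logarithmic singularities at $\theta=0$ and $\theta=-\alpha$.
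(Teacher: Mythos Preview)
Your proposal is correct and follows essentially the same route as the paper: both use the parametrization of Proposition~\ref{prop:idealpara}, obtain the invariant $3$-form $r\,t^{-3}\,dt\wedge dr\wedge d\theta$, integrate first in $t$ then in $r$ to reduce to a single $\theta$-integral of a logarithm of a ratio of generalized sines, and then split this into shifted copies of the defining integral of $\Cl_\Lambda$. Your derivation of the volume form via translation- and dilation-invariance and your explicit trigonometric identity are slightly more detailed than the paper's direct computation, but the strategy and all key steps coincide.
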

  \begin{proof} 
   To compute the volume, we express the volume form on $I$ in terms of the coordinates $r,\theta, t$ from Proposition \ref{prop:idealpara} and use the identification \eqref{eq:ymatrixrp} of $\R^4$ with the set of matrices $Y\in \mathrm{Mat}(2,\bbC_\Lambda)$ satisfying $Y^\dag=Y$.  For $\Lambda=\pm 1$,   the volume form on $ I $ is then induced by the semi-Riemann metric \eqref{eq:yform} on $\R^4$ via \eqref{eq:ymatrixrp} and the parametrization in Proposition \ref{prop:idealpara}.  A direct computation shows that it is 
  \begin{align}\label{eq:volform}
  d\vol=\frac{r}{t^3}\,dt\wedge dr\wedge d\theta.
  \end{align}
  For $\Lambda=0$ the bilinear form \eqref{eq:yform}  is degenerate and does not induce a volume form on $\Y_\Lambda$. Nevertheless, the volume form on $\Y_\Lambda$  can be defined, up to real rescaling, as the unique 3-form on $\Y_\Lambda$  invariant under the action of  $\PGL^+(2,\bbC_\Lambda)$.   It is again given by  \eqref{eq:volform}.
  The volume of  $ I $ is then obtained from \eqref{eq:volform} and the parametrization in Proposition \ref{prop:idealpara}
  \begin{align*}
  \vol( I )&=\int_0^\alpha d\theta \int_{0}^{r(\theta)}\!\!\!\!dr\int_{t(r,\theta)}^\infty \!\!\!\!dt\; \frac{r}{t^3} 
  =-\frac{1}{2}\int_0^\alpha d\theta\int_{0}^{r(\theta)} \!\! \frac{dr}{r-\frac{s_\Lambda(\alpha+\beta-\theta)}{s_\Lambda(\beta)}}
  \cr
  &=-\frac{1}{2}\int_0^\alpha d\theta\;\log\Big|\frac{s_\Lambda(\theta)}{s_\Lambda(\alpha+\beta-\theta)}\frac{s_\Lambda(\alpha-\theta)}{s_\Lambda(\theta+\beta)}\Big|
  \cr
  &=-\int_0^\alpha d\theta\log|2s_\Lambda(\theta)|
  -\int_{0}^{\beta} d\theta\log|2s_\Lambda(\theta)|
  +\int_0^{\alpha+\beta} \!\!\!\!d\theta\log|2s_\Lambda(\theta)|
  \cr
  &=\frac{1}{2}\Big(\Cl_\Lambda(2\alpha)+\Cl_\Lambda(2\beta)-\Cl_\Lambda(2(\alpha+\beta))\Big).
  \end{align*}
  \end{proof}

  \subsection{Volumes of lightlike tetrahedra}

  We now consider the volumes of lightlike tetrahedra $ L \subset\X_\Lambda$. These volumes can be computed in a similar way
  from the parametrization in Proposition \ref{prop:tetpara}.  By a straightforward change of coordinates, this yields a parametrization in which both, the lightlike tetrahedron and its volume form become particularly simple.

  \begin{theorem}\label{prop:dualvol}
  The volume of a lightlike tetrahedron $ L \subset\X_\Lambda$  is  
  \begin{align*}
  \vol( L )=&\frac{1}{2\Lambda}\Big(\Cl_\Lambda(2\alpha)+\Cl_\Lambda(2\beta)+\Cl_\Lambda(2\gamma)\Big)
  \cr
  &\qquad+\frac 1 {\Lambda}\;\Big( \alpha\log|s_\Lambda(\alpha)|+\beta\log|s_\Lambda(\beta)|+\gamma \log|s_\Lambda(\gamma)|\Big),\quad & &\Lambda=\pm 1,\\[+2ex]
  \vol( L )=&-\frac 1 3 \alpha\beta\gamma, & &\Lambda=0,
  \end{align*}
  where $\alpha$, $\beta$ and $-\gamma=\alpha+\beta$  are the edge lengths of $ L $ and 
  $\Cl_\Lambda$ is the generalized Clausen function from Theorem \ref{prop:volideal}.
  \end{theorem}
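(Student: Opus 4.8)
The plan is to follow the strategy of the proof of Theorem \ref{prop:volideal}, now starting from the global parametrization of a lightlike tetrahedron given in Proposition \ref{prop:tetpara}. First I would express the invariant volume form on $\X_\Lambda$ in the coordinates $(r,A,B)$ of that proposition. For $\Lambda=\pm1$ this is the volume form induced by the ambient bilinear form via \eqref{eq:xmatrixrp}; for $\Lambda=0$ the ambient form is degenerate, so one instead takes the unique (up to scale) $\PGL^+(2,\bbC_\Lambda)$-invariant $3$-form, normalized consistently with the limit $\Lambda\to0$. Using the exponential parametrization \eqref{eq:xpar} one obtains $d\vol$ as an explicit function of $r$ times $dr\wedge dA\wedge dB$, and then performs the change of coordinates announced before the statement: replacing $(A,B)$ by coordinates adapted to the edges (the analogue of the angular variable $\theta$ used in the ideal case), under which the integration domain acquires the same nested form as in Theorem \ref{prop:volideal} — the innermost variable $r$ ranging from $0$ to an explicit bound — and the volume form becomes particularly simple.

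Next I would carry out the iterated integral. The integral over $r$ is elementary: by \eqref{eq:trigdev} it yields a logarithm of a ratio of generalized sine functions $s_\Lambda$, multiplied by a factor linear in the remaining angular variable, together with an overall factor $1/\Lambda$. Performing one further integration and the substitutions already used in Theorem \ref{prop:volideal}, the remaining integral collapses into one-dimensional integrals and one is left with
\begin{align*}
\vol(\Theta)=\frac{1}{\Lambda}\Big(\int_0^{\alpha}\theta\, ct_\Lambda(\theta)\, d\theta+\int_0^{\beta}\theta\, ct_\Lambda(\theta)\, d\theta+\int_0^{\gamma}\theta\, ct_\Lambda(\theta)\, d\theta\Big).
\end{align*}
An integration by parts, using $\tfrac{d}{d\theta}\log|s_\Lambda(\theta)|=ct_\Lambda(\theta)$ and $\lim_{\theta\to0}\theta\log|s_\Lambda(\theta)|=0$, rewrites each summand as $-\theta\log2+\tfrac12\Cl_\Lambda(2\theta)+\theta\log|s_\Lambda(\theta)|$ evaluated at the corresponding edge length; since $\alpha+\beta+\gamma=0$ the three $\log2$ terms cancel, and one obtains the stated formula for $\Lambda=\pm1$.

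For $\Lambda=0$ the degenerate metric forces a separate argument. One option is to repeat the computation directly, with $s_0(\theta)=\theta$ and $c_0\equiv1$, using the invariant $3$-form throughout. Alternatively, one expands the $\Lambda=\pm1$ answer as a power series in $\Lambda$ about $0$: the pole $1/\Lambda$ is killed by the vanishing of the $O(1)$ part of the bracket, which holds precisely because $\alpha+\beta+\gamma=0$, and the next order of the expansion produces $-\tfrac13\alpha\beta\gamma$.

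I expect the main obstacle to be the very first step: finding the change of coordinates that simultaneously straightens the lightlike tetrahedron into a region of the above nested type and makes the invariant volume form elementary, and checking its validity — especially for $\Lambda=0$, where there is no metric and one must work with the invariant $3$-form directly. A secondary difficulty is the bookkeeping of signs and absolute values: since $\gamma=-(\alpha+\beta)$, the arguments of the functions $s_\Lambda$ change sign over the integration domain, and these cases have to be tracked so that the final formula holds for all admissible $\alpha,\beta$.
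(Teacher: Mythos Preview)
Your overall strategy coincides with the paper's: start from the parametrization of Proposition \ref{prop:tetpara}, change coordinates to simplify the domain and the volume form, integrate, and finish with the integration-by-parts identity
\[
\int_0^{\alpha}\theta\, ct_\Lambda(\theta)\, d\theta=\alpha\log|2s_\Lambda(\alpha)|+\tfrac12\Cl_\Lambda(2\alpha),
\]
the $\log 2$ terms cancelling because $\alpha+\beta+\gamma=0$. Your intermediate target
\[
\vol(\Theta)=\frac{1}{\Lambda}\Big(\int_0^{\alpha}+\int_0^{\beta}+\int_0^{\gamma}\Big)\,\theta\, ct_\Lambda(\theta)\, d\theta
\]
is exactly the expression the paper reaches just before the final step, and your treatment of the $\Lambda=0$ case (direct computation or power-series limit) also matches.

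The gap is in the middle of the argument. The $r$-integral is elementary, but it does \emph{not} produce a logarithm of a ratio of generalized sines: the radial factor in $d\vol$ is $s_\Lambda(r)^2$, so for $\Lambda=\pm1$ one gets
\[
\int_0^{R} s_\Lambda(r)^2\,dr=\frac{1}{2\Lambda}\Big(R-\tfrac12 s_\Lambda(2R)\Big),
\]
i.e.\ a mixture of the upper limit $R=r(s,t)$ itself and $s_\Lambda(2R)$. Reducing the remaining double integral over the angular variables to the one-dimensional $\int\theta\,ct_\Lambda(\theta)\,d\theta$ form is the genuinely hard part of the proof: in the paper this requires a specific trigonometric substitution $(A,B)\mapsto(s,t)$, a non-obvious closed-form antiderivative in $s$ involving two inverse generalized cotangents, a further change of variables combining pieces of the $t$-integral, and repeated use of the addition law for $ct_\Lambda^{-1}$ before the integrand collapses. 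Your proposal skips precisely this stretch, and the analogy with Theorem \ref{prop:volideal} is misleading here: in the ideal case the inner integral already yields $\log|s_\Lambda|$ because the radial weight is $t^{-3}$, whereas in $\X_\Lambda$ the weight $s_\Lambda(r)^2$ forces the longer route.
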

  
  \begin{proof}  Starting from the parametrization in Proposition \ref{prop:tetpara} and setting
  \begin{align*}
  A=\frac{\sin(s)-\sin(t)}{2\cos(t)},\qquad\qquad B=\frac{\sin(s)+\sin(t)}{2\cos(t)},
  \end{align*}
  we can rewrite the matrix $X(A,B)$ in Proposition \ref{prop:tetpara}  as 
  \begin{align*}
  &X(s,t)=\frac{\sin(s)-\sin(t)}{2\cos(t)}X_{41}+\frac{\sin(s)+\sin(t)}{2\cos(t)}X_{42}+\frac{\sin(s)-\cos(t)}{\cos(t)}X_{43}.
  \end{align*}
  This yields the global parametrization
  \begin{align}\label{eq:lighttetpar}
  & L =\Big\{x(r,s,t)\mid 0\leq r\leq r(s,t)\leq\pi,\; |t|\leq s\leq \tfrac{\pi}{2}-|t|,\; -\tfrac{\pi}{4}\leq t\leq \tfrac{\pi}{4}\Big\}
  \end{align}
  with
  \begin{align}\label{eq:geodinttvol}
  &x(r,s,t)= \left(\begin{matrix}c_\Lambda(r)+\ell\frac{\cos(t)}{\cos(s)}s_\Lambda(r) & \ell\frac{\sin(t)-\sin(s)}{\cos(s)}s_\Lambda(r) \cr \ell\frac{\sin(s)+\sin(t)}{\cos(s)}s_\Lambda(r) & c_\Lambda(r)-\ell\frac{\cos(t)}{\cos(s)}s_\Lambda(r) \end{matrix}\right),\\
  \label{eq:rint}
  &r(s,t)=ct_\Lambda^\inv\left(\frac{a\sin(t)+b\cos(t)+c\sin(s)}{d\cos(s)}\right),
  \end{align}
  and
  \begin{align}\label{eq:rint2}
  &a=\frac 1 2\left(\frac{s_\Lambda(\alpha)}{s_\Lambda(\beta)}-\frac{s_\Lambda(\beta)}{s_\Lambda(\alpha)}\right),  & &c=\frac 1 2\left(\frac{s_\Lambda(\alpha)}{s_\Lambda(\beta)}+\frac{s_\Lambda(\beta)}{s_\Lambda(\alpha)}\right),\\
  &b=c_\Lambda(\alpha+\beta), & &d=s_\Lambda(\alpha+\beta).\nonumber
  \end{align}
  To express the volume form on $ L $ in terms of the coordinates $r,s,t$, we use the identification \eqref{eq:xmatrixrp} of $\R^4$ with the set of matrices $X\in \mathrm{Mat}(2,\bbC_\Lambda)$ satisfying $X^\circ=X$. For $\Lambda=\pm 1$ the volume form on $\X_\Lambda$ is the 3-form on $\mathrm{AdS}_3$ or $\mathrm{dS}_3$ induced by the semi-Riemannian metric $\langle\cdot,\cdot \rangle_{2,0,2}$ or $\langle\cdot,\cdot \rangle_{1,0,3}$  on $\R^4$, respectively.  For $\Lambda=0$, it is the standard 3-form on $\R^3$. In all three cases, the induced volume form on $ L $ is obtained from the identification \eqref{eq:xmatrixrp} and the parametrization \eqref{eq:geodinttvol} and reads
  \begin{align*}
  d\vol=\frac{s_\Lambda(r)^2}{\cos(s)^2} \, dt\wedge ds\wedge dr.
  \end{align*}
  To compute the volume of the lightlike tetrahedron $ L $, we integrate this volume form over the parameter range in \eqref{eq:lighttetpar}. 
  For $\Lambda=0$, this is a direct and simple computation 
  \begin{align*}
  \vol( L )&=\int_{-\tfrac \pi 4}^{\tfrac \pi 4} dt \int_{|t|}^{\frac{\pi}{2}-|t|}\!\!\!\!\!\! ds\int_0^{r(s,t)}\!\!\!\!\!\! dr\; \frac{s_\Lambda(r)^2}{\cos(s)^2}=\frac 1 3 \int_{-\tfrac \pi 4}^{\tfrac \pi 4}dt \int_{|t|}^{\tfrac \pi 2-|t|} \!\!\!\!\!\!ds\; \frac{r(s,t)^3}{\cos(s)^2}\\
  &=\frac 1 3 \int_{0}^{\tfrac \pi 4} dt \int_{t}^{\tfrac \pi 2-t} \!\!\!\!\!\!ds\; \frac{r(s,t)^3+r(s,-t)^3}{\cos(s)^2}.
  \end{align*}
  Inserting expression \eqref{eq:rint} for $r(s,t)$ with $t_\Lambda(x)=x$ for $\Lambda=0$, we obtain
  \begin{align*}
  \vol( L )&=\frac {d^3} 3 \int_0^{\tfrac \pi 4} dt\int_t^{\tfrac\pi 2-t} \!\!\!\!\!\!ds\; \frac{\cos(s)}{(a\sin(t)+b\cos(t)+ c\sin(s))^3}\\
  &+\frac {d^3} 3 \int_0^{\tfrac \pi 4} dt\int_t^{\tfrac\pi 2-t} \!\!\!\!\!\!ds\;\frac{\cos(s)}{(-a\sin(t)+b\cos(t)+ c\sin(s))^3}\\
  &=\frac{d^3} {6c} \int_0^{\tfrac \pi 4} \frac{dt}{\cos(t)^2} \; \Bigg(\frac 1 {((a+c)\tan(t)+b)^2}+\frac 1 {((c-a)\tan(t)+b)^2}\Bigg)\\
  &-\frac{d^3} {6c} \int_0^{\tfrac \pi 4} \frac{dt}{\cos(t)^2} \; \Bigg(\frac 1 {(a\tan(t)+b+c)^2} +\frac 1 {(-a\tan(t)+b+c)^2}\Bigg)\\
  &=\frac 1 3 \alpha\beta(\alpha+\beta),
  \end{align*}
  where we used the substitution rule twice and in the last step inserted the expressions for $a,b,c,d$  from \eqref{eq:rint2} with $s_\Lambda(x)=x$ and $c_\Lambda(x)=1$ for 
  $\Lambda=0$.
  
  For $\Lambda=\pm 1$ the computation of the volume is more involved.
  Performing the integration over $r$ and splitting the integral over $t$ we obtain
  \begin{align*}
  &\vol( L )=\int_{-\tfrac \pi 4}^{\tfrac \pi 4}dt \int_{|t|}^{\frac{\pi}{2}-|t|}\!\!\!\! ds\int_0^{r(s,t)}\!\!\!\! dr\;\; \frac{s_\Lambda(r)^2}{\cos(s)^2}
  \\
  &=\frac{1}{4\ell^2}\int_{-\tfrac \pi 4}^{\tfrac \pi 4}dt \int_{|t|}^{\frac{\pi}{2}-|t|}\!\!\!\! ds\;\;\frac{s_\Lambda(2r(s,t))-2r(s,t)}{\cos(s)^2}\nonumber\\
  &=\frac{1}{4\ell^2}\int_{0}^{\tfrac \pi 4}dt \int_{t}^{\frac{\pi}{2}-t}\!\!\!\! ds\;\;\left(\frac{s_\Lambda(2r(s,t))-2r(s,t)}{\cos(s)^2}+ \frac{s_\Lambda(2r(s,-t))-2r(s,-t)}{\cos(s)^2}\right).\nonumber
  \end{align*}
  To integrate over $s$, we now use the indefinite integral
  \begin{align*}
  \int ds\;\;&\frac{s_\Lambda(2r(s,t))-2r(s,t)}{\cos(s)^2}=
  -2ct_\Lambda^\inv\Bigg(\frac{a\sin(t)+b\cos(t)+c\sin(s)}{d\cos(s)}\Bigg) \tan(s)
  \cr
  &
  \qquad\qquad-2ct_\Lambda^\inv\Bigg(\frac{(a\cos(t)-b\sin(t))^2-c\sin(s)(a\sin (t)+b\cos (t))-c^2}{d\sin (s) (a \cos(t)-b\sin(t))}\Bigg) \frac{a\tan(t)+b}{a-b\tan(t)}, \nonumber
  \end{align*}
  where $ct_\Lambda^\inv$ is the generalized inverse cotangent given by \eqref{eq:tandef}. 
  That the derivative of the right hand side with respect to $s$ is indeed the integrand of the left hand side follows by a direct but lengthy computation. The derivative of the term $\tan(s)$ on the right hand side gives the second term on the left. The first term on the left is obtained from the derivatives of the inverse generalized cotangents on the right hand side with the formulas
  \begin{align*}
  \frac {d} {dx}  ct_\Lambda^\inv(x) =-\frac 1 {x^2-\ell^2},\qquad \qquad s_\Lambda(2ct_\Lambda^\inv(x))=\frac{2x}{x^2-\ell^2},
  \end{align*}
   that follow from  \eqref{eq:trigids},  \eqref{eq:trigdev}   and \eqref{eq:tandef}. After some computations using trigonometric identities and inserting 
   expressions \eqref{eq:rint} and \eqref{eq:rint2} for $r(s,t)$  and $a,b,c,d$,  one then obtains the first term in the integrand on the left.

  To perform the integration over $s$, we insert this indefinite integral into the expression for $\vol( L )$. Simplifying the resulting terms with the addition formulas 
  \begin{align*}
  ct_\Lambda^\inv(x)+ct_\Lambda^\inv(y)=ct_\Lambda^\inv\Big(\frac{xy+\ell^2}{x+y}\Big),
  \end{align*}
  derived from  \eqref{eq:trigids} and \eqref{eq:tandef}, then yields
  \begin{align*}
  \vol( L )=\frac{1}{2\ell^2}\int_{0}^{\tfrac \pi 4}dt
   \Bigg[&\Big(\frac{a\tan(t)+b}{a-b\tan(t)}+\tan(t)\Big)ct_\Lambda^\inv\Big(\frac{(a+c)\tan(t)+b}{d}\Big)
  \cr
  -&\Big(\frac{a\tan(t)-b}{a+b\tan(t)}+\tan(t)\Big)ct_\Lambda^\inv\Big(\frac{a+c+b\tan(t)}{d\tan(t)}\Big)
  \cr
  +&\Big(\frac{a\tan(t)-b}{a+b\tan(t)}-\cot(t)\Big)ct_\Lambda^\inv\Big(\frac{(a+c)(1-\tan(t))+b(1+\tan(t)}{d(1+\tan(t))}\Big)
  \cr
  -&\Big(\frac{a\tan(t)+b}{a-b\tan(t)}-\cot(t)\Big)ct_\Lambda^\inv\Big(\frac{(a+c)(1+\tan(t))+b(1-\tan(t))}{d(1-\tan(t))}\Big)
  \cr
  -&\Big(\frac{a\tan(t)-b}{a+b\tan(t)}-\tan(t)\Big)ct_\Lambda^\inv\Big(\frac{b}{d}\Big)\\
  -&\Big(\frac{a\tan(t)+b}{a-b\tan(t)}-\frac{a\tan(t)-b}{a+b\tan(t)}\Big)ct_\Lambda^\inv\Big(\frac{a+b+c}{d}\Big)\Bigg].
  \end{align*}
  To simplify this integral further, we apply a change of variables,
  \begin{align*}
  \tan(s)=\frac{1-\tan(t)}{1+\tan(t)},
  \end{align*}
  to the third and fourth term to combine them with the first and second term, respectively.  After some further computations involving trigonometric identities we then obtain
  \begin{align*}
  \vol( L )=\frac{1}{2\ell^2}\int_{0}^{\tfrac \pi 4}\frac{dt}{\cos^2(t)}
  \Bigg[&\Big(\frac{1}{\tan(t)-1}-\frac{1}{\tan(t)-\frac{a}{b}}+\frac{1}{\tan(t)+\frac{a+b}{a-b}}\Big)
  ct_\Lambda^\inv\Big(\frac{(a+c)\tan(t)+b}{d}\Big)
  \cr
  -&\Big(\frac{1}{\tan(t)-1}-\frac{1}{\tan(t)+\frac{a}{b}}+\frac{1}{\tan(t)+\frac{a-b}{a+b}}\Big)ct_\Lambda^\inv\Big(\frac{a+c+b\tan(t)}{d\tan(t)}\Big)
  \cr
  +&\;\;\frac{1}{\tan(t)-\frac{a}{b}}ct_\Lambda^\inv\Big(\frac{a+b+c}{d}\Big)
  -\frac{1}{\tan(t)+\frac{a}{b}}ct_\Lambda^\inv\Big(\frac{a-b-c}{d}\Big)\Bigg].
  \end{align*}
  To perform the integration over $t$ we apply the changes of variables
  \begin{align*}
  ct_\Lambda(\theta)=\frac {(a+c)\tan(t)+b} d,\qquad ct_\Lambda(\theta)=\frac{a+c+b\tan(t)} {d\tan(t)}
  \end{align*}
  to the first and third terms and to the second and fourth terms in this expression, respectively. We then combine the resulting expressions, insert formulas \eqref{eq:rint2} for the variables $a,b,c,d$  and use the definition of the generalized trigonometric functions in terms of the exponential and the identities  \eqref{eq:trigids}.   After some computations this yields
  \begin{align*}
  \vol( L )
  &=\frac{1}{\ell}\int_{\beta}^{\alpha+\beta}\!\!\!\!\!\! d\theta \Bigg(\frac{\theta+\beta}{1-e^{-2\ell\theta}}-\frac{\theta}{1-e^{2\ell(\beta-\theta)}}\Bigg)
  \cr
  &+\frac{1}{\ell}\int_{0}^{\beta}\;d\theta \Bigg(\frac{\theta-\alpha}{1-e^{2\ell(\alpha+\beta-\theta)}}-\frac{\theta}{1-e^{2\ell(\beta-\theta)}}\Bigg)
  \cr
  &+\frac{1}{\ell}\int_{\beta}^{\alpha+\beta}\!\!\!\!\!\!d\theta \Bigg(\frac{\theta-\beta}{1-e^{-2\ell\theta}\frac{1+z^2}{1+\bar z^2}}-\frac{\theta}{1-e^{-2\ell(\beta+\theta)}\frac{1+z^2}{1+\bar z^2}}\Bigg)
  \cr
  &+\frac{1}{\ell}\int_{0}^{\beta}\;d\theta \Bigg(\frac{\theta+\alpha}{1-e^{-2\ell(\alpha+\beta+\theta)}\frac{1+z^2}{1+\bar z^2}}-\frac{\theta}{1-e^{-2\ell(\beta+\theta)}\frac{1+z^2}{1+\bar z^2}}\Bigg),
  \end{align*}
  where $z$ is the cross-ratio from Corollary \ref{cor:crossratio}. 
  
  The terms in the third and fourth line cancel, and the remaining terms can be recombined to
  \begin{align*}
  \vol( L )&=\frac{1}{\ell}\int_{0}^{\alpha+\beta}d\theta\frac{\theta+\beta}{1-e^{-2\ell\theta}}-\frac{1}{\ell}\int_{0}^{\alpha}d\theta \frac{\theta+\beta}{1-e^{-2\ell\theta}}-\frac{1}{\ell}\int_{0}^{\beta}d\theta\frac{\theta+\beta}{1-e^{-2\ell\theta}}
  \cr
  &+\frac{1}{\ell}\int^{\alpha+\beta}_{0}d\theta\frac{-\theta+\beta}{1-e^{2\ell\theta}}-\frac{1}{\ell}\int^{\alpha}_0 d\theta\frac{-\theta+\beta}{1-e^{2\ell\theta}}-\frac{1}{\ell}\int^{\beta}_{0}d\theta \frac{-\theta+\beta}{1-e^{2\ell\theta}}
  \cr
  &=\frac{1}{\ell^2}\int_{0}^{\alpha+\beta}d\theta\;  \theta \, ct_\Lambda(\theta)-\frac{1}{\ell^2}\int_{0}^{\alpha}d\theta\;  \theta\, ct_\Lambda(\theta)-\frac{1}{\ell^2}\int_{0}^{\beta}d\theta\;  \theta\, ct_\Lambda(\theta).
  \end{align*}
  To complete the computation of the volume it is now sufficient to note that
  \begin{align}\label{eq:llclausen}
  &\int_{0}^{\alpha}d\theta\;  \theta\; ct_\Lambda(\theta)
  =\int_{0}^{\alpha}d\theta \Bigg[\frac d {d\theta}\Big(\theta\log|2s_\Lambda(\theta)|\Big)-\log|2s_\Lambda(\theta)|\Bigg]\\
  &=\alpha\log|2s_\Lambda(\alpha)|-\int_{0}^{\alpha}d\theta\log|2s_\Lambda(\theta)|
  =\alpha\log|2s_\Lambda(\alpha)|+\frac{1}{2}\Cl_\Lambda(2\alpha),\nonumber
  \end{align}
  where $\Cl_\Lambda$ is the generalized Clausen function defined in Theorem \ref{prop:volideal}. Inserting this identity in the expression for the volume yields the volume formula for $\Lambda=\pm 1$ in Theorem \ref{prop:dualvol}.
  \end{proof}
  
  Note that the volume of the lightlike tetrahedron $ L \subset\X_\Lambda$   for $\Lambda=0$  is also obtained from the volume formula for a 3-simplex in 3d Minkowski space. Omitting the coordinate $x_2$ in the identification \eqref{eq:xmatrixrp} we can identify the vertices of $ L $  with   points in $\R^3$. The volume is then given by the Minkowski bilinear form $\langle\cdot,\cdot\rangle_{1,0,2}$ and the Lorentzian wedge product on $\R^3$ as
  \begin{align*}
  \vol( L )=\frac 1 6\, \Big |\Big\langle x_3-x_4, (x_1-x_4)\wedge (x_2-x_4)\Big\rangle\Big|=\frac 1 3 \alpha\beta(\alpha+\beta).
  \end{align*}
  
  It remains to clarify the relation between the volume formulas for a lightlike tetrahedron for $\Lambda=0$ and $\Lambda=\pm 1$. 
  For $\Lambda=-\ell^2=0$  the division by $\ell^2$ in the volume formula for $\Lambda=\pm 1$  is ill-defined.  However, in this case we have $\Cl_\Lambda(x)=-x\log|2s_\Lambda(x)|+x$ and hence the numerator of the volume formula for $\Lambda=\pm 1$ also vanishes.  In fact, we can obtain the volume formula for $\Lambda=0$ as a limit of the formula for $\Lambda=\pm 1$ if we extend the latter to $\Lambda\in \R$ by considering its expansion as a power series in $\ell$.

  \begin{corollary} The volume of a lightlike tetrahedron $ L \subset\X_\Lambda$ is given as a power series in its shortest edge lengths $\alpha,\beta$ and in $\Lambda$  by
  \begin{align}\label{eq:volexp}
  \vol( L )
  &=\sum_{k=1}^\infty \frac{4^{k} (-1)^{k-1}  \Lambda^{k-1} B_{2k}}{(2k+1)!} \sum_{j=1}^{k} { \binom{k+1}{j}} \alpha^j \beta^{k+1-j}\\
  &=\frac 1 3 \alpha\beta(\alpha+\beta) +O(\Lambda), \nonumber
  \end{align}
  where $B_{2k}$ is the $2k$th Bernoulli number.
  \end{corollary}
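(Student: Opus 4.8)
The plan is to take the closed formula for $\vol(\Theta)$ at $\Lambda=\pm1$ from Theorem~\ref{prop:dualvol} and re-expand it around $\Lambda=0$. The most convenient point of departure is not the Clausen--logarithm form itself but one of the intermediate expressions reached in the proof of that theorem, namely
\begin{align*}
\vol(\Theta)=\frac{1}{\ell^2}\Big(F(\alpha+\beta)-F(\alpha)-F(\beta)\Big),\qquad F(x):=\int_0^x\theta\, ct_\Lambda(\theta)\,d\theta,\qquad \ell^2=-\Lambda.
\end{align*}
(That this equals the stated volume follows from~\eqref{eq:llclausen}, the oddness of $\Cl_\Lambda$, and $\alpha+\beta+\gamma=0$.) The whole problem then reduces to expanding the single one-variable function $F$ jointly in $x$ and $\Lambda$ and substituting.

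First I would expand the integrand. For $\Lambda=\mu^2>0$ one has $s_\Lambda(\theta)=\mu^{-1}\sin(\mu\theta)$ and $c_\Lambda(\theta)=\cos(\mu\theta)$, so $\theta\, ct_\Lambda(\theta)=(\mu\theta)\cot(\mu\theta)$, and the classical generating identity $w\cot w=\sum_{n\ge0}(-1)^nB_{2n}(2w)^{2n}/(2n)!$ for $|w|<\pi$ gives $\theta\, ct_\Lambda(\theta)=\sum_{n\ge0}(-1)^n4^nB_{2n}\Lambda^n\theta^{2n}/(2n)!$. Since $s_\Lambda$ and $c_\Lambda$ are by construction power series in $\Lambda$ whose coefficients are entire in $\theta$, both sides are power series in $\Lambda$ with entire coefficients; agreeing for all $\Lambda>0$ forces them to agree for every $\Lambda\in\R$, the case $\Lambda<0$ simply reproducing the Taylor expansion of the hyperbolic cotangent. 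On the domain $|\Lambda|\,\theta^2<\pi^2$ the series converges uniformly on compacts, so it may be integrated term by term to give $F(x)=\sum_{n\ge0}(-1)^n4^nB_{2n}\Lambda^n x^{2n+1}/(2n+1)!$.

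The remaining steps are bookkeeping. Substituting into the display above, the $n=0$ term contributes $B_0\big((\alpha+\beta)-\alpha-\beta\big)=0$, so the single negative power of $\Lambda$ cancels; shifting the summation index then yields
\begin{align*}
\vol(\Theta)=\sum_{k=1}^\infty\frac{(-1)^{k-1}\,4^k\,B_{2k}}{(2k+1)!}\,\Lambda^{k-1}\Big((\alpha+\beta)^{2k+1}-\alpha^{2k+1}-\beta^{2k+1}\Big),
\end{align*}
and expanding $(\alpha+\beta)^{2k+1}$ by the binomial theorem and dropping the two endpoint terms recasts the bracket as the inner sum of~\eqref{eq:volexp}. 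Reading off $k=1$, where $4B_2/3!=\tfrac19$ and $(\alpha+\beta)^3-\alpha^3-\beta^3=3\alpha\beta(\alpha+\beta)$, gives the leading term $\tfrac13\alpha\beta(\alpha+\beta)$, matching the $\Lambda=0$ value of Theorem~\ref{prop:dualvol}; the series converges whenever $|\alpha|,|\beta|,|\alpha+\beta|<\pi/\sqrt{|\Lambda|}$, which for $\Lambda=1$ is precisely the admissible range of Proposition~\ref{prop:tetpara}.

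The main obstacle I anticipate is the justification of the $\Lambda$-expansion: one must carefully argue that the cotangent identity derived in the positive-curvature case propagates to all $\Lambda$, and that the two interchanges involved---summation with integration, and the double summation when collecting equal powers of $\Lambda$---are legitimate on the stated domain. Once this is in place, the rest is a routine manipulation of Bernoulli numbers and binomial coefficients.
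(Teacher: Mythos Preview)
Your proposal is correct and follows essentially the same route as the paper: both start from the intermediate representation $\vol(\Theta)=\ell^{-2}\big(F(\alpha+\beta)-F(\alpha)-F(\beta)\big)$ with $F(x)=\int_0^x \theta\,ct_\Lambda(\theta)\,d\theta$, expand $\theta\,ct_\Lambda(\theta)=x/t_\Lambda(x)$ via the Bernoulli generating series, integrate termwise, observe that the $n=0$ term cancels, and finish with the binomial expansion. Your treatment of the $\Lambda$-dependence and the interchange of sum and integral is slightly more explicit than the paper's, but the argument is the same.
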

  
  \begin{proof}
  Using expression \eqref{eq:expellsincos}  for the generalized trigonometric functions in terms of the exponential map, which extends to general $\Lambda=-\ell^2\in\R$,  and the well-known Laurent series expansion of the cotangent and hyperbolic cotangent, we 
  obtain the power series 
   \begin{align*}
  \frac x {t_\Lambda(x)}=\sum_{k=0}^\infty\frac{4^{k}  B_{2k} (-1)^{k} \Lambda^k} {(2k)!} x^{2k}=1-\frac{\Lambda}{3} x^2-\frac {\Lambda^2}{45} x^4+ \ldots,
  \end{align*}
   for general  $\Lambda=-\ell^2\in\R$.
  Integrating this expression as in \eqref{eq:llclausen} yields
  \begin{align}\label{eq:helpclaus}
  &\frac 1 2 \Cl_\Lambda(2 y)+y\log|2s_\Lambda(y)|=\int_0^y dx\frac x{t_\Lambda(x)}\\
  =& \sum_{k=0}^\infty \frac{4^{k}B_{2k}(-1)^k\Lambda^k}{(2k+1)!} y^{2k+1}=y-\frac {\Lambda y^3}{9}-\frac {\Lambda^2 y^5}{225}+\ldots .\nonumber
  \end{align}
   Subtracting expression \eqref{eq:helpclaus} for $y=\alpha$ and $y=\beta$ from the one for $y=\alpha+\beta$ 
   annihilates the linear term. After dividing by $\ell^2=-\Lambda$ and applying the binomial formula one obtains the first line in  \eqref{eq:volexp}. 
  \end{proof}

  \paragraph{Acknowledgments}
  CS thanks Prof. Jinsung Park (KIAS) for comments and discussions during the preparation of this paper. 
  CM thanks Prof. Andreas Knauf (FAU) for comments on a draft of this paper.

  \paragraph{Funding statement}
  This work was supported in part by the National Research Foundation of Korea (NRF) grant funded by the Korea government (MSIT) (2019R1F1A1060827).
  
  This research was supported in part by Perimeter Institute for Theoretical Physics. Research at Perimeter Institute is supported by the Government of Canada through the Department of Innovation, Science and Economic Development and by the Province of Ontario through the Ministry of Research and Innovation.
  
  CS was supported by a KIAS Individual Grant (SP036102) via the Center for Mathematical Challenges at Korea Institute for Advanced Study.

\end{document}